\def\d{\,{\rm d}}
\def\E{\,{\rm e}}
\begin{document}

\title{Optimal interpolation formulas in $W_2^{(m,m-1)}$ space %\thanks{Grants or other notes
%about the article that should go on the front page should be
%placed here. General acknowledgments should be placed at the end of the article.}
}
%\subtitle{Do you have a subtitle?\\ If so, write it here}

\titlerunning{Optimal interpolation formulas}        % if too long for running head

\author{S.S. Babaev, A.R. Hayotov}

%\authorrunning{Short form of author list} % if too long for running head

\institute{S.S. Babaev\at
Bukhara State University, 11, M.Iqbol str.,  Bukhara, Uzbekistan\\
\email{b\_samandar@mail.ru}
\and A.R. Hayotov
\at
V.I.Romanovskiy Institute of Mathematics, Uzbekistan Academy of Sciences, M.Ulugbek str. 81,
Tashkent 100125, Uzbekistan\\
\email{hayotov@mail.ru}
}

\date{Received: date / Accepted: date}
% The correct dates will be entered by the editor

\maketitle
\begin{abstract}
In the present paper optimal
interpolation formulas are constructed in $W_2^{(m,m-1)}(0,1)$
space. Explicit formulas for coefficients of optimal interpolation
formulas are obtained. Some numerical results are presented.

\textbf{MSC:} 41A05, 41A15, 65D30, 65D32.

\textbf{Keywords:} optimal interpolation formula, optimal
coefficients, the error functional, the extremal function.
\end{abstract}

\maketitle

%\begin{figure}
  % Requires \usepackage{graphicx}
%  \includegraphics[width=5cm, angle=-90]{55555_N=2.eps}
%    \includegraphics[width=5cm, angle=-90]{55555_N=3.eps}
%      \includegraphics[width=5cm, angle=-90]{55555_N=4.eps}
%      \includegraphics[width=5cm, angle=-90]{55555_N=5.eps}
%\end{figure}

\
\section{Introduction: statement of the Problem}

The typical problem of approximation is the interpolation problem. The classical method of its solution consists of construction
of interpolation polynomials. However polynomials have some drawbacks as an instrument of approximation of functions with specifics and functions
with small smoothness. It is proved that the sequence of the Lagrange polynomials,
which constructed for concrete continues function for equally  spaced nodes, by increasing of degree is not converge to the function.
That is why splines are used instead of interpolation polynomials of high degree.

There are algebraic and variational approaches in the spline theory. In algebraic approach splines are considered as
some smooth piecewise polynomial functions. In variational approach splines are elements of
Hilbert or Banach spaces minimizing certain functionals. The first spline functions were constructed from pieces of cubic polynomials.
After that, this construction was modified, the degree of polynomials increased, but the idea of their constructions remains permanently.
The next essential step in the theory of splines was Hollyday's result \cite{Hol57}, connecting Schoenberg's cubic splines with the solution of the variational problem on minimum of square of a function norm from the space $L_2^{(2)}$.   Further, the Hallyday result was generalized by Carl de Boor \cite{deBoor63}.
Further, a large number of papers appeared, where, depending on specific requirements, the variational functional was modified.
The theory of splines based on variational methods studied and developed, for example, by Ahlberg et al \cite{Ahlb67}, Arcangeli et al
\cite{Arc04}, Attea \cite{Attea92}, A.V.Bezhaev and  V.A.Vasilenko \cite{BezhVas01}, C. de Boor \cite{deBoor78}, M.I.Ignatev and
A.B.Pevniy \cite{Ign91}, P.J.Laurent \cite{Lor75}, I.Schoenberg \cite{Schoen64},  L.L.Schumaker \cite{Schum07}, S.T.Stechkin and Yu.N.Subbotin \cite{Stech76} and others.

The present paper is devoted to a variational method. Here we construct optimal interpolation formulas.

Assume we are given the table of the values $\varphi(x_\beta)$, $\beta=0,1,...,N$ of functions $\varphi$ at points $x_{\beta}\in [0,1]$.
It is required approximate functions $\varphi$ by another more simple function $P_{\varphi}$, i.e.
\begin{equation} \label{(1)}
\varphi (x) \cong P_\varphi  (x) = \sum\limits_{\beta = 0}^N
{C_\beta(x) \cdot \varphi (x_\beta )},
\end{equation}
which satisfies the following equalities
\begin{equation}\label{(2)}
 \varphi(x_{\beta})=P_{\varphi}(x_{\beta}), \ \beta=0,1,...,N.
\end{equation}
Here $C_\beta (x)$ and $x_\beta$ ($\in [0,1]$) are
\emph{the coefficients} and \emph{the nodes} of the interpolation formula
(\ref{(1)}), respectively.

We suppose that functions
$\varphi$ belong to the Hilbert space
$$
W_2^{(m,m-1)}(0,1)=\{\varphi:[0,1]\to \mathbb{R}\ |\
\varphi^{(m-1)} \mbox{ is abs. cont. and }
\varphi^{(m)}\in L_2(0,1)\},
$$
equipped with the norm
\begin{equation}\label{(3)}
\left\| {\varphi|W_2^{(m,m-1)}(0,1)} \right\| = \left\{
{\int\limits_0^1 {\left( \varphi^{(m)}(x)+\varphi^{(m-1)}(x)
\right)} ^2 \d x} \right\}^{1/2}
\end{equation}
and ${\int\limits_0^1 {\left( \varphi^{(m)}(x)+\varphi^{(m-1)}(x)
\right)} ^2 \d x}<\infty$. The equality (\ref{(3)}) is
the semi-norm and $\|\varphi\|=0$ if and only if
$\varphi(x)=c_0+c_1x+...+c_{m-2}x^{m-2}+c_{m-1}e^{-x}$.

It should be noted that for a linear differential operator of
order $m$, $L\equiv
a_m\frac{\d^m}{\d x^m}+a_{m-1}\frac{\d^{m-1}}{\d x^{m-1}}+...+a_1\frac{\d}{\d x}+a_0$, $a_m\neq 0$,
Ahlberg, Nilson and Walsh in the book \cite[Chapter 6]{Ahlb67}
investigated the Hilbert spaces in the context of generalized
splines. Namely, with the inner product
$$
\langle\varphi,\psi\rangle=\int_0^1L\varphi(x)\cdot L\psi(x) \d x.
$$

The difference $\varphi-P_{\varphi}$ is called \emph{the error} of
the interpolation formula (\ref{(1)}). The value of this
error at a point $z\in [0,1]$ is the linear functional on the space $W_2^{(m,m-1)}(0,1)$, i.e.
\begin{eqnarray}
(\ell,\varphi)&=&\varphi(z)-P_{\varphi}(z)=\varphi(z)-\sum\limits_{\beta=0}^N
C_\beta(z)\varphi(x_{\beta})\nonumber \\
&=&\int\limits_{-\infty}^{\infty}\left(\delta(x-z)-\sum\limits_{\beta=0}^N
C_\beta(z)\delta(x-x_\beta)\right)\varphi(x)\d x,  \label{(4)}
\end{eqnarray}
where $\delta(x)$ is the Dirac delta-function and
\begin{equation}\label{(5)}
\ell(x,z)=\delta(x-z)-\sum\limits_{\beta=0}^N
C_\beta(z)\delta(x-x_\beta)
\end{equation}
is \emph{the error functional} of the interpolation formula
(\ref{(1)}) and belongs to the space ${W_2^{(m,m-1)*}}(0,1)$.
The space ${W_2^{(m,m-1)*}}(0,1)$ is the conjugate to the
space\linebreak ${W_2^{(m,m-1)}}(0,1)$. Further, for convenience, we denote $\ell(x,z)$ by $\ell(x)$.

By the Cauchy-Schwarz inequality the absolute value of the error (\ref{(4)}) is estimated as follows
$$
|(\ell,\varphi)|\leq
\|\varphi|{W_2^{(m,m-1)}}\|\cdot
\|\ell|{W_2^{(m,m-1)*}}\|,
$$
where
$$
\left\| {\ell|{W_2^{(m,m-1)*}} } \right\| = \mathop {\sup }\limits_{\varphi,\ \|\varphi \| \neq 0}
\frac{|(\ell,\varphi)|}{\|\varphi\|}.
$$
Therefore, in order to estimate
the error of the interpolation formula (\ref{(1)}) on
functions of the space ${W_2^{(m,m-1)}}(0,1)$ it is required to
find the norm of the error functional $\ell$ in the
conjugate space ${W_2^{(m,m-1)*}}(0,1)$.

From here we get

\begin{problem}\label{Prob1}
Find the norm of the error functional $\ell$ of
the interpolation formula (\ref{(1)}) in the space
${W_2^{(m,m-1)*}}(0,1)$.
\end{problem}

It is clear that the norm of the error functional $\ell$ depends on
the coefficients $C_\beta(z)$ and the nodes $x_{\beta}$. The problem of minimization of the quantity $\|\ell\|$ by coefficients $C_{\beta}(z)$ is
the linear problem and by nodes $x_{\beta}$ is, in general, nonlinear and complicated problem.
We consider the problem of minimization of the quantity $\|\ell\|$ by coefficients $C_{\beta}(z)$ when nodes $x_{\beta}$ are fixed.

The coefficients $\mathring{C}_{\beta}(z)$ (if there exist) satisfying
the following equality
 \begin{equation}
 \label{(6)}
\left\| \mathring{\ell}|{W_2^{(m,m-1)*}}\right\| = \mathop {\inf }\limits_{C_\beta(z) }  \left\|\ell|{W_2^{(m,m-1)*}} \right\|
 \end{equation}
are called \emph{the optimal coefficients} and corresponding interpolation formula
$$
\mathring{P}_{\varphi}(z)=\sum\limits_{\beta=0}^N\mathring{C}_{\beta}(z)\varphi(x_\beta)
$$
is called \emph{the optimal interpolation formula} in the space $W_2^{(m,m-1)*}(0,1)$.

Thus, in order to construct the optimal interpolation formula in
the space ${W_2^{(m,m-1)}}(0,1)$ we need to solve the next
problem.

\begin{problem}\label{Prob2}
Find the coefficients $\mathring{C}_\beta(z)$ which satisfy equality
(\ref{(6)}) when the nodes $x_{\beta}$ are fixed.
\end{problem}

The main aim of the present paper is to construct the optimal interpolation formulas
in ${W_2^{(m,m-1)}}(0,1)$ space and to find explicit formulas for optimal coefficients. First such a problem
was stated and studied by S.L. Sobolev in \cite{Sob61a},
where the extremal function of the interpolation formula was found
in the Sobolev space $W_2^{(m)}$.

The rest of the paper is organized as follows. In Section 2 the
extremal function which corresponds to the error functional
$\ell$ is found and with its help representation of the norm of
the error functional (\ref{(5)}) is calculated, i.e. Problem
1 is solved; in Section 3 in order to find the minimum of
$\left\| \ell \right\|^2 $ by coefficients $C_\beta(z)$ the system
of linear equations is obtained for the coefficients of optimal
interpolation formulas (\ref{(1)}) in the space
$W_2^{(m,m-1)}(0,1)$, moreover existence and uniqueness of the
solution of this system are proved; in Section 4 we give the algorithm for finding of coefficients of
optimal interpolation formulas (\ref{(1)}); Section 5 is
devoted to calculation of optimal coefficients using the algorithm
which is given in Section 4; finally, in Section 6 we give some
numerical results which confirm the theoretical results of the present
paper.

\section{The extremal function and the norm of the error functional $\ell$ }

In this section we solve  Problem 1, i.e. we find explicit form of
the norm of the error functional $\ell$.

For finding the explicit form of the norm of the error functional
$\ell$ in the space ${W_2^{(m,m-1)}}$ we use its extremal function which was introduced by Sobolev
\cite{Sob61a,Sob74}. The function  $\psi_{\ell}$ from
${W_2^{(m,m-1)}}(0,1)$ space is called \emph{the extremal
function} for the error functional $\ell$ if the following
equality is fulfilled
$$
\left( {\ell,\psi_{\ell}} \right) = \left\| {\ell\left|
{{W_2^{(m,m-1)*} }} \right.} \right\| \cdot \left\|
{\psi_{\ell}\left|{{W_2^{(m,m-1)}}} \right.} \right\|.
$$
The space ${W_2^{(m,m-1)}}(0,1)$ is a Hilbert space and the
inner product in this space is defined by the following formula
\begin{equation}\label{(7)}
\langle {\varphi,\psi} \rangle  =
\int\limits_0^1 \left(\varphi ^{(m)} (x)+\varphi ^{(m-1)}
(x)\right)\left(\psi ^{(m)}(x)+\psi ^{(m-1)}(x)\right)\d x.
\end{equation}
According to the Riesz theorem any linear continuous functional
$\ell$ in a Hilbert space is represented in the form of a
inner product. So, in our case, for any function $\varphi$ from ${W_2^{(m,m-1)}}(0,1)$
space, we have
\begin{equation} \label{(8)}
\left( {\ell,\varphi} \right) = \langle{\psi _\ell,\varphi} \rangle
\end{equation}
Here $\psi_\ell$ is the function from
${W_2^{(m,m-1)}}(0,1)$ is defined  uniquely by functional $\ell$ and is the extremal function.

It is easy to see from (\ref{(8)}) that the error functional $\ell$, defined on the space $W_2^{(m,m-1)}(0,1)$, satisfies the following equalities
\begin{eqnarray}
&&(\ell,x^{\alpha})=0,\quad\alpha=0,1,...,m-2,\label{(9)}\\
&&(\ell,\E^{-x})=0.    \label{(10)}
\end{eqnarray}
The equalities (\ref{(9)}) and (\ref{(10)}) mean that our interpolation formula is exact for the function $\E^{-x}$ and
for any polynomial of degree $\leq m-2$.

The equation (\ref{(8)}) was solved in Theorem 2.1 of \cite{ShadHay14} and for the extremal function $\psi_{\ell}$ was obtained the following
expression
\begin{equation}\label{(11)}
\psi_{\ell}(x)=(-1)^m\ell(x)*G_m(x)+P_{m-2}(x)+d\E^{-x},
\end{equation}
where
\begin{equation}\label{(12)}
G_m(x)=\frac{\mathrm{sgn}x}{2}\left(\frac{\E^x-\E^{-x}}{2}-\sum\limits_{k=1}^{m-1}\frac{x^{2k-1}}{(2k-1)!}\right),
\end{equation}
$P_{m-2}(x)$ is a polynomial of degree $m-2$, $d$ is a constant
and $*$ is the operation of convolution which for the functions $f$ and $g$ is defined as follows
$$
f(x)*g(x)=\int\limits_{-\infty}^{\infty}f(x-y)g(y)\d y=\int\limits_{-\infty}^{\infty}f(y)g(x-y)\d y.
$$

Now we obtain the norm of the error functional
$\ell$. Since the space ${W_2^{(m,m-1)}}(0,1)$ is the Hilbert
space then by the Riesz theorem we have
\begin{equation*}
 \left( {\ell,\psi_\ell} \right) = \|\ell\| \cdot \| \psi _\ell\| =\| \ell\|^2 .
\end{equation*}
Hence, using (\ref{(5)}) and (\ref{(11)}), taking
into account (\ref{(9)}) and (\ref{(10)}),  we get
{\small
\begin{eqnarray*}
\|\ell\|^2&=&
(\ell,\psi_{\ell})=\int\limits_{-\infty}^{\infty}\ell(x,z)\psi_{\ell}(x)\d x\\
&=& \int\limits_{-\infty}^{\infty}\ell(x,z) \bigg((-1)^m\bigg[
G_m(x-z)-\sum\limits_{\beta=0}^NC_{\beta}(z)G_m(x-x_{\beta})\bigg]+P_{m-2}(x)+d\E^{-x}
\bigg)\d x\\
&=&(-1)^m\int\limits_{-\infty}^{\infty}\left(\delta(x-z)-\sum\limits_{\beta=0}^NC_{\beta}(z)\delta(x-x_{\beta})\right)
\left(
G_m(x-z)-\sum\limits_{\beta=0}^NC_{\beta}(z)G_m(x-x_{\beta})\right)\d x.
\end{eqnarray*}
}
Hence, keeping in mind that $G_m(x)$, defined by (\ref{(12)}), is the even function, we
have
\begin{equation}
\|\ell\|^2=(-1)^m\left(\sum\limits_{\beta=0}^N\sum\limits_{\gamma=0}^N
C_{\beta}(z)C_{\gamma}(z)G_m(x_\beta-x_\gamma)-2\sum\limits_{\beta=0}^NC_{\beta}(z)G_m(z-x_{\beta})
\right).\label{(13)}
\end{equation}
Thus Problem \ref{Prob1} is solved.

Further, in next sections, we solve Problem \ref{Prob2}.

\section{Existence and uniqueness of optimal interpolation formula}

Assume that the nodes $x_\beta$  of the interpolation formula
(\ref{(1)}) are fixed. The error functional (\ref{(5)})
satisfies conditions (\ref{(9)}) and (\ref{(10)}). The
norm of the error functional $\ell$ is multidimensional
function with respect to the coefficients $C_\beta(z)$ $(\beta  =
\overline {0,N} )$. For finding the point of the conditional
minimum of the expression (\ref{(13)}) under the conditions
(\ref{(9)}) and (\ref{(10)}) we apply the Lagrange
method.

Consider the function
\begin{eqnarray*}
&&\Psi (C_0(z),C_1(z),...,C_N(z),p_0(z),...,p_{m-2}(z),d(z))\\
&&\qquad=\left\| \ell \right\|^2  -
2(-1)^m \sum\limits_{\alpha=0}^{m-2}p_{\alpha}(z)
\left({\ell,x^{\alpha}}\right)- 2(-1)^m d(z)
\left(\ell,\E^{-x}\right).
\end{eqnarray*}
Equating to 0 the partial derivatives of the function $\Psi$
by $C_\beta(z)$ $(\beta =\overline{0,N})$, $p_0,p_1,...,p_{m-2}$ and $d$, we get the following
system of linear equations
\begin{eqnarray}
&&\sum\limits_{\gamma=0}^N C_\gamma(z) G_m(x_\beta   - x_\gamma  )
+ \sum\limits_{\alpha=0}^{m-2}p_{\alpha}(z)x_{\beta}^{\alpha}+
d(z)\E^{-x_{\beta}} = G_m(z-x_{\beta}),\label{(14)}\\
&&\beta  =
0,1,...,N,\nonumber \\
&& \sum\limits_{\gamma=0}^N C_\gamma(z)
\ x_{\gamma}^{\alpha}=z^\alpha,\
\alpha=0,1,...,m-2,\label{(15)}\\
&&\sum\limits_{\gamma=0}^NC_\gamma(z)\ \E^{-x_{\gamma}}=\E^{-z},
\label{(16)}
\end{eqnarray}
where $G_m(x)$ is defined by equality (\ref{(12)}).

The system (\ref{(14)})-(\ref{(16)}) has a unique solution
and this solution gives the minimum to $\left\|\ell\right\|^2
$ under the conditions (\ref{(15)}) and (\ref{(16)}) when $N+1\geq m$.

The uniqueness of the solution of the system
(\ref{(14)})--(\ref{(16)}) is proved as the uniqueness of the solution of the system (26)--(28) of the work
\cite{ShadHay14}.

Therefore, in fixed values of the nodes $x_\beta$ the square of
the norm of the error functional $\ell$, being quadratic
function of the coefficients $C_\beta(z)$, has a unique minimum in
some concrete value $C_\beta(z)= \mathring{C}_{\beta }(z)$.

As it was said in the first section the interpolation formulas with the
coefficients $ \mathring{C}_{\beta }(z)$, corresponding
to this minimum in fixed nodes $x_\beta$, is called \emph{the
optimal interpolation formula} and $\mathring{C}_{\beta}(z)$ are called \emph{the optimal coefficients}.

\begin{remark}\label{Rem1}
It should be noted that by integrating both sides of the system (\ref{(14)})-(\ref{(16)}) by $z$ from 0 to 1
we get the system (26)-(28) of the work \cite{ShadHay14}. This means that by integrating the optimal interpolation formula (\ref{(1)}) in  the space $W_2^{(m,m-1)}(0,1)$ we get the optimal quadrature formula of the form (1) in the same space (see \cite{ShadHay14}).
\end{remark}

\begin{remark}\label{Rem2}
It is clear from the system (\ref{(14)})-(\ref{(16)}) that for the optimal coefficients the following are true
\begin{eqnarray*}
\mathring{C}_{\beta}(x_\gamma)&=&\delta_{\beta\gamma},\  \gamma=0,1,...,N,\ \ \beta=0,1,...,N,\\
p_{\alpha}(x_\gamma)&=&0,\ \ \gamma=0,1,...,N,
\end{eqnarray*}
where $\delta_{\beta\gamma}$ is the Kronecker symbol.
\end{remark}

Below for convenience the optimal coefficients $ \mathring{C}_{\beta}(z)$ we remain as $C_\beta(z)$.

\section{The algorithm for computation of coefficients of optimal interpolation formulas}

In the present section we give the algorithm for solution of the
system (\ref{(14)})-(\ref{(16)}). Here we use similar
method suggested by S.L. Sobolev \cite{SobVas,Sob77} for finding
the coefficients of optimal quadrature formulas in the space
$L_2^{(m)}(0,1)$. Below mainly is used the concept of discrete
argument functions and operations on them. The theory of discrete
argument functions is given in \cite{Sob74,SobVas}. For
completeness we give some definitions about functions of discrete
argument.

Assume that the nodes $x_\beta$ are equal spaced, i.e. $x_\beta=
h\beta,$ $h = {1 \over N}$, $N = 1,2,...$.

\begin{definition}
  The function $\varphi (h\beta )$ is a
\emph{function of discrete argument} if it is given on some set of
integer values of $\beta$.
\end{definition}

\begin{definition}
\emph{The inner product} of two discrete argument
functions $\varphi(h\beta )$ and $\psi (h\beta )$ is given by
$$
\left[ {\varphi(h\beta),\psi(h\beta) } \right] =
\sum\limits_{\beta  =  - \infty }^\infty  {\varphi (h\beta ) \cdot
\psi (h\beta )},
$$
if the series on the right hand side converges absolutely.
\end{definition}

\begin{definition}
\textit{The convolution} of two functions
$\varphi(h\beta )$ and $\psi (h\beta )$ is the inner product
$$
\varphi (h\beta )*\psi (h\beta ) = \left[ {\varphi (h\gamma ),\psi
(h\beta  - h\gamma )} \right] = \sum\limits_{\gamma  =  - \infty
}^\infty  {\varphi (h\gamma ) \cdot \psi (h\beta  - h\gamma )}.
$$
\end{definition}

Now we turn on to our problem.

Suppose that $C_\beta(z)=0$  when $\beta  < 0$ and $\beta  > N$.
Using above mentioned definitions we rewrite the system
(\ref{(14)})-(\ref{(16)})  in the convolution
form
\begin{eqnarray}
&&G_m(h\beta )*C_\beta(z)+\sum\limits_{\alpha=0}^{m-2}p_{\alpha}(z)(h\beta)^{\alpha} + d(z)\E^{-h\beta} =
G_m(z-h\beta),\label{(17)}\\
&& \beta  = 0,1,...,N, \nonumber \\
&& C_{\beta}(z)=0,\quad\quad \beta<0,\ \  \beta>N\label{(18)}\\
&&\sum\limits_{\beta=0}^N C_\beta(z)\cdot
(h\beta)^{\alpha}=z^{\alpha},\ \ \  \alpha=0,1,...,m-2,
\label{(19)}\\
&&\sum\limits_{\beta=0}^N C_\beta(z)\cdot \E^{-h\beta} =
\E^{-z}.\label{(20)}
\end{eqnarray}

Thus we have the following problem.

\begin{problem}
Find the discrete functions $C_\beta(z)$, $\beta=0,1,...,N$, $p_{\alpha}(z)$, $\alpha=0,1,...,m-2$ and $d(z)$ which
satisfy the system (\ref{(17)})-(\ref{(20)}).
\end{problem}

Further we investigate Problem 3 which is equivalent to Problem 2.
Instead of $C_\beta(z)$ we introduce the following functions
\begin{eqnarray}
v_m(h\beta ) &=& G_m(h\beta )*C_\beta(z),\label{(21)}\\
u_m(h\beta)&=&v_m\left({h\beta}\right)+\sum\limits_{\alpha=0}^{m-2}p_{\alpha}(z)(h\beta)^{\alpha}+ d(z)
\E^{-h\beta}.\label{(22)}
\end{eqnarray}
Now we should express the coefficients
$C_\beta(z)$ by the function $u(h\beta)$. For this we use the operator $D_m(h\beta)$ which satisfies the equality
\begin{equation}
D_m(h\beta )*G_m(h\beta ) =\delta_{d}(h\beta) ,\label{(23)}
\end{equation}
where $\delta_d(h\beta)$ is equal to 0 when $\beta \ne 0$ and is
equal to 1 when $\beta  = 0$, i.e. $\delta_d(h\beta)$ is the
discrete delta-function.

In \cite{ShadHay04a,ShadHay04b} the
operator $D_m(h\beta )$  which satisfies
equation (\ref{(23)}) is constructed and its some properties
are studied.

The following theorems are proved in works
\cite{ShadHay04a,ShadHay04b}.

\begin{theorem}\label{THM4.1}
The discrete analogue of the differential operator ${{\d^{2m}}
\over {\d x^{2m} }}-{{\d^{2m-2}} \over {\d x^{2m-2} }}$ satisfying the
equation (\ref{(23)}) has the form
\begin{equation}
D_m(h\beta)=\frac{1}{p_{2m-2}}\left\{
\begin{array}{ll}
\sum\limits_{k=1}^{m-1}A_k \lambda_k^{|\beta|-1},& |\beta|\geq 2,\\
-2\E^h+\sum\limits_{k=1}^{m-1}A_k,& |\beta|=1,\\
2C+\sum\limits_{k=1}^{m-1}\frac{A_k}{\lambda_k},& \beta=0,
\end{array}
\right.\label{(24)}
\end{equation}
where
\begin{eqnarray}
C&=&1+(2m-2)\E^h+\E^{2h}+\frac{\E^h\cdot
p_{2m-3}}{p_{2m-2}},\label{(25)}\\
A_k&=&\frac{2(1-\lambda_k)^{2m-2}[\lambda_k(\E^{2h}+1)-\E^h(\lambda_k^2+1)]p_{2m-2}}
{\lambda_k\ \mathcal{P}_{2m-2}'(\lambda_k)},\label{(26)}\\
\mathcal{P}_{2m-2}(\lambda)&=&\sum\limits_{s=0}^{2m-2}p_s\lambda^s\nonumber
\\
&=&(1-\E^{2h})(1-\lambda)^{2m-2}-
2(\lambda(\E^{2h}+1)-\E^h(\lambda^2+1)) \label{(27)}\\
&& \times
\left[h(1-\lambda)^{2m-4}+\frac{h^3(1-\lambda)^{2m-6}}{3!}E_2(\lambda)+...+
\frac{h^{2m-3}E_{2m-4}(\lambda)}{(2m-3)!}\right],\nonumber
\end{eqnarray}
$p_{2m-2},\ p_{2m-3}$ are the coefficients of
the polynomial $\mathcal{P}_{2m-2}(\lambda)$ defined  by equality
(\ref{(27)}), $\lambda_k$ are the roots of the polynomial
$\mathcal{P}_{2m-2}(\lambda)$ which absolute values less than 1,
$E_k(\lambda)$ is the Euler-Frobenius polynomial of degree $k$
(the definition of the Euler-Frobenius polynomial is given, for
example, in \cite{SobVas}).
\end{theorem}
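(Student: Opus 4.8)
The plan is to convert the convolution identity (\ref{(23)}) into an algebraic identity for generating functions, solve it, and recover $D_m(h\beta)$ by partial fractions. For a function $f$ of discrete argument introduce the (formal) discrete Fourier transform $\widehat{f}(\lambda)=\sum_{\beta=-\infty}^{\infty}f(h\beta)\,\lambda^{\beta}$. By the convolution theorem for functions of discrete argument, (\ref{(23)}) is equivalent to $\widehat{D}_m(\lambda)\,\widehat{G}_m(\lambda)=1$, since $\widehat{\delta_d}\equiv 1$. Because $G_m(h\beta)$ grows like $\mathrm{e}^{h|\beta|}$, the two-sided series for $\widehat{G}_m$ has no common annulus of convergence; I would therefore work inside the Sobolev--Vaskevich calculus of discrete-argument functions \cite{Sob74,SobVas}, treating $\widehat{G}_m$ as the meromorphic function obtained by summing the convergent one-sided pieces and continuing analytically (equivalently, the generating function only guides the computation and the resulting convolution may be checked directly at the end).

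The heart of the argument is the explicit evaluation of $\widehat{G}_m(\lambda)$. I would split $G_m$ from (\ref{(12)}) into its hyperbolic part $\tfrac{\mathrm{sgn}\,\beta}{2}\cdot\tfrac{\mathrm{e}^{h\beta}-\mathrm{e}^{-h\beta}}{2}$ and its polynomial part $-\tfrac{\mathrm{sgn}\,\beta}{2}\sum_{k=1}^{m-1}\tfrac{(h\beta)^{2k-1}}{(2k-1)!}$. Summing the hyperbolic part is a pair of geometric series and gives a rational function with denominator $(1-\mathrm{e}^{h}\lambda)(1-\mathrm{e}^{-h}\lambda)$; summing the polynomial part rests on the Euler--Frobenius identities that express $\sum_{\beta\ge 1}\beta^{2k-1}\lambda^{\beta}$ through the Euler--Frobenius polynomials $E_{2k-2}(\lambda)$ with denominators $(1-\lambda)^{2k}$, which introduce $E_{2},\dots,E_{2m-4}$. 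Reducing the two contributions to the common denominator $(1-\lambda)^{2m-2}(1-\mathrm{e}^{h}\lambda)(1-\mathrm{e}^{-h}\lambda)$, I expect the numerator to collapse exactly to the polynomial $\mathcal{P}_{2m-2}(\lambda)$ of (\ref{(27)}), so that, up to a nonzero multiplicative constant fixed by the normalisation,
\[
\widehat{G}_m(\lambda)=\frac{\mathcal{P}_{2m-2}(\lambda)}{2\,(1-\lambda)^{2m-2}(1-\mathrm{e}^{h}\lambda)(1-\mathrm{e}^{-h}\lambda)}.
\]
Verifying that the numerator is precisely $\mathcal{P}_{2m-2}$, with the bracketed Euler--Frobenius terms of (\ref{(27)}) appearing in the stated pattern, is the step where the bookkeeping is heaviest and is the main obstacle. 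The evenness of $G_m$ forces $\widehat{G}_m(\lambda)=\widehat{G}_m(1/\lambda)$, so $\mathcal{P}_{2m-2}$ is (anti)reciprocal and its roots split into pairs $\lambda_k$, $1/\lambda_k$ with $|\lambda_k|<1$.

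It remains to invert $\widehat{D}_m(\lambda)=1/\widehat{G}_m(\lambda)$. This is a rational function whose poles are exactly the roots of $\mathcal{P}_{2m-2}$; since its numerator exceeds $\deg\mathcal{P}_{2m-2}=2m-2$ by two, I would first split off a Laurent-polynomial part supported near $\beta=0$ and then decompose the proper remainder into simple fractions over the poles $\lambda_k$ and $1/\lambda_k$. Using $\sum_{\beta}\lambda_k^{|\beta|}\lambda^{\beta}=\tfrac{1}{1-\lambda_k\lambda}+\tfrac{\lambda_k}{\lambda-\lambda_k}$, the simple fractions invert to the decaying tails $A_k\lambda_k^{|\beta|-1}$ of (\ref{(24)}), with residues $A_k=\mathrm{Num}(\lambda_k)/\mathcal{P}_{2m-2}'(\lambda_k)$; the algebraic identity $\lambda_k(\mathrm{e}^{2h}+1)-\mathrm{e}^{h}(\lambda_k^{2}+1)=-\mathrm{e}^{h}(1-\mathrm{e}^{h}\lambda_k)(1-\mathrm{e}^{-h}\lambda_k)$ then puts $A_k$ into the form (\ref{(26)}). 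Finally, reading the local part at $\beta=0$ and $|\beta|=1$ produces the corrections $2C+\sum_k A_k/\lambda_k$ and $-2\mathrm{e}^{h}+\sum_k A_k$, with the constant $C$ of (\ref{(25)}). Once the rational form of $\widehat{G}_m$ is established, this inversion and the extraction of the $\beta=0,\pm1$ values are routine, and the computation follows the lines of \cite{ShadHay04a,ShadHay04b}.
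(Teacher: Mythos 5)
The paper itself contains no proof of Theorem~\ref{THM4.1}: the statement is quoted from \cite{ShadHay04a,ShadHay04b}, so there is no in-paper argument to compare yours against in detail. Your outline is, in substance, the construction used in those references (and in Sobolev's original scheme \cite{Sob74,SobVas}): pass to the generating function of the discrete-argument functions, sum the hyperbolic part of $G_m$ as geometric series with denominator $(1-\E^{h}\lambda)(1-\E^{-h}\lambda)$ and the polynomial part via $\sum_{\beta\ge 1}\beta^{2k-1}\lambda^{\beta}=\lambda E_{2k-2}(\lambda)/(1-\lambda)^{2k}$, identify $\widehat{G}_m(\lambda)=\mathcal{P}_{2m-2}(\lambda)\big/\big(2(1-\lambda)^{2m-2}(1-\E^{h}\lambda)(1-\E^{-h}\lambda)\big)$, and invert by partial fractions; the identity $\lambda(\E^{2h}+1)-\E^{h}(\lambda^{2}+1)=-\E^{h}(1-\E^{h}\lambda)(1-\E^{-h}\lambda)$ is indeed what turns the residues into the form (\ref{(26)}), and the polynomial part of the division yields the local corrections at $\beta=0,\pm1$ with the constant $C$ of (\ref{(25)}). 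The one substantive reservation is that, as written, this is a plan rather than a proof: the step you yourself flag as the main obstacle --- checking that after reduction to the common denominator the numerator is exactly $\mathcal{P}_{2m-2}$ of (\ref{(27)}), with the leading term $(1-\E^{2h})(1-\lambda)^{2m-2}$ and the stated pattern of $E_{2k-2}$ factors, and likewise pinning down the exact normalisations ($p_{2m-2}$, the factor $\lambda_k$, signs) in (\ref{(24)})--(\ref{(26)}) --- is where essentially all the content of the theorem lives, and it is deferred rather than carried out. That computation is finite and mechanical, and once done the rest of your argument closes, so the approach is sound; it simply is not yet a complete verification.
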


\begin{theorem}
\label{THM4.2} The discrete analogue $D_m(h\beta)$ of the
differential operator ${{\d^{2m}} \over {\d x^{2m} }}-{{\d^{2m-2}}
\over {\d x^{2m-2} }}$ satisfies the following equalities

1) $D_m(h\beta)*\E^{h\beta}=0,$

2) $D_m(h\beta)*\E^{-h\beta}=0,$

3) $D_m(h\beta)*(h\beta)^n=0,$ for $n=0,1,...,2m-3$,

4) $D_m(h\beta)*G_m(h\beta)=\delta_d(h\beta),$\\
here $G_m(h\beta)$ is the function of discrete argument
corresponding to the function $G_m(x)$ defined by equality
(\ref{(12)}) and $\delta_d(h\beta)$ is the discrete delta-function.
\end{theorem}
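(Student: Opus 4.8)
The four identities are the discrete shadows of the elementary fact that the continuous operator $\frac{\d^{2m}}{\d x^{2m}}-\frac{\d^{2m-2}}{\d x^{2m-2}}$ annihilates each of $\E^{x},\E^{-x},1,x,\dots,x^{2m-3}$ and has $G_m$ as a fundamental solution, so the natural plan is to pass to the generating function of $D_m$ and read all four statements off a single closed-form expression. Set
$$\widetilde{D}_m(\lambda)=\sum_{\beta=-\infty}^{\infty}D_m(h\beta)\,\lambda^{\beta},$$
which converges in an annulus around $|\lambda|=1$ provided $|\lambda_k|\E^{h}<1$ for every root $\lambda_k$ (a bound on the in-disk roots of $\mathcal{P}_{2m-2}$ that must be recorded, since it also guarantees absolute convergence of the convolutions below). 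Because $D_m(h\beta)*\E^{\pm h\beta}=\E^{\pm h\beta}\,\widetilde{D}_m(\E^{\mp h})$, items 1) and 2) are equivalent to $\widetilde{D}_m(\E^{-h})=0$ and $\widetilde{D}_m(\E^{h})=0$; moreover (\ref{(24)}) is even in $\beta$, so $\widetilde{D}_m(\lambda)=\widetilde{D}_m(1/\lambda)$ and item 2) follows from item 1). Expanding $(h(\beta-\gamma))^{n}$ by the binomial theorem shows that item 3) holds for all $n\le 2m-3$ iff the moments $M_j=\sum_{\gamma}D_m(h\gamma)\gamma^{j}$ vanish for $j=0,\dots,2m-3$, i.e. iff $\lambda=1$ is a zero of $\widetilde{D}_m$ of order at least $2m-2$. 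Finally item 4) is the defining relation (\ref{(23)}) itself, equivalently $\widetilde{D}_m(\lambda)\,\widetilde{G}_m(\lambda)=1$.

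Everything thus reduces to one computation: summing the three pieces of (\ref{(24)}) — the term at $\beta=0$, the pair at $|\beta|=1$, and the geometric tails $\sum_{\beta\ge 2}\lambda_k^{\beta-1}\lambda^{\pm\beta}$ — into a rational function and identifying its zeros. Using that each $\lambda_k$ is a root of $\mathcal{P}_{2m-2}$ and that the weights $A_k$ of (\ref{(26)}) are the corresponding residues, the partial-fraction assembly should collapse to
$$\widetilde{D}_m(\lambda)=\mathrm{const}\cdot\frac{(\lambda-\E^{h})(\lambda-\E^{-h})(\lambda-1)^{2m-2}}{\mathcal{P}_{2m-2}(\lambda)},$$
where $\mathcal{P}_{2m-2}(\E^{\pm h})\ne0$ and $\mathcal{P}_{2m-2}(1)\ne0$. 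From this one formula all four items are immediate: the numerator gives the zeros at $\E^{\pm h}$ (items 1, 2) and the zero of order $2m-2$ at $\lambda=1$ (item 3), and reciprocating yields $\widetilde{G}_m$, hence item 4). The zeros at $\E^{\pm h}$ are structurally forced by the identity $\lambda(\E^{2h}+1)-\E^{h}(\lambda^{2}+1)=-\E^{h}(\lambda-\E^{h})(\lambda-\E^{-h})$, the very factor that appears in both (\ref{(26)}) and (\ref{(27)}).

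The main obstacle is the order-$(2m-2)$ vanishing at $\lambda=1$. The behaviour at $\E^{\pm h}$ is comparatively transparent through the factorization just displayed, but the zero at $\lambda=1$ is delicate: it rests on identities for the Euler–Frobenius polynomials $E_k(\lambda)$ entering (\ref{(27)}) — their values and reciprocity near $\lambda=1$ — which control the Taylor expansion of $\mathcal{P}_{2m-2}$ about $\lambda=1$ and hence the order of the zero of the assembled numerator. I would therefore isolate as the key lemma the statement that the numerator is $O((\lambda-1)^{2m-2})$ while $\mathcal{P}_{2m-2}(1)\ne0$, and prove it from the Euler–Frobenius recurrences; equivalently, verify the moment conditions $M_0=\dots=M_{2m-3}=0$ directly by summing the series $\sum_{\gamma}\gamma^{j}\lambda_k^{|\gamma|-1}$ and invoking the same identities. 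Once this lemma is secured, the rest is routine geometric-series bookkeeping, carried out in \cite{ShadHay04a,ShadHay04b}.
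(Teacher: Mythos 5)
The paper itself does not prove Theorem \ref{THM4.2}: it is quoted from \cite{ShadHay04a,ShadHay04b}, so there is no internal argument to compare against. Judged on its own terms, your reduction is the right one and is essentially the route taken in those references: pass to the symbol $\widetilde{D}_m(\lambda)$, observe that 1)--2) amount to $\widetilde{D}_m(\E^{\mp h})=0$ (with 2) following from 1) by the evenness of $D_m(h\beta)$ in $\beta$), that 3) is equivalent, via the binomial expansion, to the vanishing of the moments $\sum_\gamma D_m(h\gamma)\gamma^j$ for $j=0,\dots,2m-3$, i.e.\ to a zero of order $2m-2$ of the symbol at $\lambda=1$, and that 4) is the defining relation already asserted in Theorem \ref{THM4.1}. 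The convergence caveat $|\lambda_k|\E^{h}<1$ needed for the convolutions in 1), 2) and 4) is correctly flagged.

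The difficulty is that what you have written is a plan rather than a proof: the entire content of the theorem sits in the two facts you defer. That the assembled numerator of $\widetilde{D}_m$ vanishes at $\E^{\pm h}$ requires the specific relations among $C$, the $A_k$ and $\mathcal{P}_{2m-2}$ in (\ref{(25)})--(\ref{(27)}), not merely the factorization of $\lambda(\E^{2h}+1)-\E^{h}(\lambda^2+1)$; and the order-$(2m-2)$ vanishing at $\lambda=1$ is exactly the ``key lemma'' you leave to unproved Euler--Frobenius identities. Until that lemma is established, none of items 1)--3) follow. Two further concrete problems: your closed form for $\widetilde{D}_m$ is missing a factor $\lambda^{-1}$ --- as written it is incompatible with the symmetry $\widetilde{D}_m(\lambda)=\widetilde{D}_m(1/\lambda)$ that you yourself invoke (check $m=1$, where summing (\ref{eq.(5.4)}) gives $\widetilde{D}_1(\lambda)=\frac{-2\E^{h}}{1-\E^{2h}}\,(\lambda-\E^{h})(\lambda-\E^{-h})/\lambda$); this does not move the zeros, but it shows the ``partial-fraction assembly'' has not actually been carried out. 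And the identity $\widetilde{D}_m(\lambda)\,\widetilde{G}_m(\lambda)=1$ is not available as stated, because $\sum_\beta G_m(h\beta)\lambda^{\beta}$ converges in no annulus ($G_m(h\beta)$ grows like $\E^{h|\beta|}/4$ in both directions); item 4) must either be verified as a pointwise convolution under the bound $|\lambda_k|\E^{h}<1$, or simply taken as the statement of Theorem \ref{THM4.1}.
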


Then taking into account (\ref{(22)}), (\ref{(23)}), using
Theorems \ref{THM4.1} and \ref{THM4.2}, for optimal coefficients we
have
\begin{equation}
C_\beta(z)=D_m(h\beta )*u_m(h\beta ).\label{(28)}
\end{equation}

Thus if we find the function $u_m(h\beta )$ then the optimal
coefficients $C_{\beta}(z)$ will be found from equality
(\ref{(28)}).

In order to calculate the convolution (\ref{(28)}) it is required to
find the representation of the function $u_m(h\beta )$ for all
integer values of $\beta$. From equality (\ref{(17)}) we get that
$u_m(h\beta ) = G_m(z-h\beta )$ when $h\beta \in [0,1]$. Now we find the representation of the function $u_m(h\beta )$ when
$\beta < 0$ and $\beta>N$.

Since $C_\beta(z)= 0$ when $h\beta \notin [0,1]$ then
$$
C_\beta(z)   = D_m(h\beta )*u_m(h\beta ) =
0,\,\,\,\, h\beta \notin [0,1].
$$

Now we calculate the convolution $v_m(h\beta ) =
G_m(h\beta)*C_\beta(z)$ when $h\beta \notin [0,1]$.

Suppose $\beta  \leq 0$ then taking into account equalities
(\ref{(12)}), (\ref{(19)}) and (\ref{(20)}), we have
\begin{eqnarray*}
 v_m(h\beta)&=& \sum\limits_{\gamma  =  - \infty }^\infty
{C_\gamma(z)
\,G_m(h\beta  - h\gamma )  }\\
&=&- {1 \over 2}\sum\limits_{\gamma  = 0}^N {C_\gamma(z)  } \left(
{{{\E^{h\beta  - h\gamma }  - \E^{ - h\beta  + h\gamma } } \over 2}
- \sum\limits_{k = 1}^{m - 1} {{{\left( {h\beta  - h\gamma }
\right)^{2k - 1} } \over {(2k - 1)!}}} } \right) \\
&=&-{{\E^{h\beta } } \over 4}\ \E^{-z}+D\E^{-h\beta }+Q_{2m -
3}(h\beta ) + R_{m - 2}(h\beta ).
\end{eqnarray*}
Thus when $\beta\leq 0$ we get
\begin{equation}
v_m(h\beta)=-{{\E^{h\beta } } \over 4}\ \E^{-z}+D\E^{-h\beta }+Q_{2m -
3}(h\beta) + R_{m - 2}(h\beta), \label{(29)}
\end{equation}
where
\begin{equation}
\begin{array}{rl}
Q_{2m - 3}(h\beta )=&\sum\limits_{i=0}^{2m-3}q_i(h\beta)^i\\[5mm]
=&{1 \over
2} \left({\sum\limits_{k = 1}^{\left[ {{{m + 1} \over 2}} \right]
- 1} {\sum\limits_{\alpha = 0}^{2k - 1} {{{(h\beta )^{2k - 1 -
\alpha } (-1)^\alpha z^\alpha } \over {(2k - 1 - \alpha )! \cdot
\alpha!}}    } } } \right. +\left. {\sum\limits_{k = \left[ {{{m +
1} \over 2}} \right]}^{m - 1}
 {\sum\limits_{\alpha  = 0}^{m - 2} {{{(h\beta )^{2k - 1 - \alpha }
 ( - 1)^\alpha z^\alpha} \over {(2k - 1 - \alpha )! \cdot \alpha!}}} } } \right)
 \end{array}
\label{(30)}
\end{equation}
is the polynomial of degree $2m-3$ with respect to $(h\beta)$,
\begin{equation}
\begin{array}{rl}
R_{m - 2}(h\beta )=&\sum\limits_{i=0}^{m-2}r_i(h\beta)^i\\
=&{1 \over 2}\sum\limits_{k= \left[{{{m + 1}
\over 2}} \right]}^{m - 1} {\sum\limits_{\alpha  = m - 1}^{2k - 1}
{{{(h\beta )^{2k - 1 - \alpha } (-1)^\alpha  } \over {(2k - 1 -
\alpha )! \cdot \alpha !}}} } \sum\limits_{\gamma  = 0}^N
{C_\gamma(z)} (h\gamma )^\alpha
\end{array}
\label{(31)}
\end{equation}
is unknown polynomial of degree $m-2$ of $(h\beta)$,
\begin{equation}
D={1 \over 4}\sum\limits_{\gamma  = 0}^N {C_\gamma(z)  \E^{h\gamma
} }.\label{(32)}
\end{equation}

Similarly, in the case $\beta\geq N$ for the convolution
$v_m(h\beta) = G_m(h\beta)*C_\beta(z)$ we obtain
\begin{equation}
v_m(h\beta )= {{\E^{h\beta } } \over 4}\ \E^{-z}-D\E^{ - h\beta }-
Q_{2m-3}(h\beta)-R_{m-2}(h\beta ).\label{eq.(4.16)}
\end{equation}

We denote
\begin{eqnarray}
R_{m - 2}^{-}(h\beta )= P_{m - 2}(h\beta ) + R_{m - 2} (h\beta
),\,\,\,\,\,\,a^ -   = d + D,\label{eq.(4.17)}\\
R_{m - 2}^{+}(h\beta ) = P_{m - 2} (h\beta ) - R_{m - 2} (h\beta
),\,\,\,\,\,\,a^ +   = d - D,\label{eq.(4.18)}
\end{eqnarray}
where $R_{m-2}^-(h\beta)=\sum\limits_{i=0}^{m-2}r_i^-(h\beta)^i$ and $R_{m-2}^+(h\beta)=\sum\limits_{i=0}^{m-2}r_i^+(h\beta)^i$.\\
Then taking into account (\ref{(22)}), (\ref{(29)}),
(\ref{eq.(4.16)}) and the last notations (\ref{eq.(4.17)}), (\ref{eq.(4.18)}) we get the following problem.

\begin{problem}
Find the solution of the equation
\begin{equation}
D_m(h\beta )*u_m(h\beta) = 0,\,\, h\beta  \notin
[0,1],\label{eq.(4.19)}
\end{equation}
which has the form
\begin{equation}
u_m(h\beta ) = \left\{
\begin{array}{ll}
-\frac{\E^{h\beta}}{4}\ \E^{-z}+a^-\E^{-h\beta}+
Q_{2m-3}(h\beta)+R_{m-2}^{-}(h\beta),& \beta\leq 0, \\
G_m(z-h\beta ),& 0 \le \beta  \le N, \\
\frac{\E^{h\beta}}{4}\ \E^{-z}+a^+\E^{-h\beta}-
Q_{2m-3}(h\beta)+R_{m-2}^{+}(h\beta),& \beta\geq N.\\
\end{array}
\right.\label{eq.(4.20)}
\end{equation}
Here $R_{m-2}^{-}(h\beta)$ and $R_{m-2}^{+}(h\beta)$ are unknown
polynomials of degree $m-2$ with respect to $h\beta$, $a^-$ and
$a^+$ are unknown constants.
\end{problem}

If we find $R_{m-2}^{-}(h\beta)$, $R_{m-2}^{+}(h\beta)$, $a^-$ and
$a^+$ then from (\ref{eq.(4.17)}), (\ref{eq.(4.18)}) we have
$$
P_{m-2}(h\beta)=\frac{1}{2}\left(R_{m-2}^{-}(h\beta)+R_{m-2}^{+}(h\beta)\right),\
\ d=\frac{1}{2}(a^-+a^+),
$$
$$
R_{m-2}(h\beta)=\frac{1}{2}\left(R_{m-2}^{-}(h\beta)-R_{m-2}^{+}(h\beta)\right),\
\ D=\frac{1}{2}(a^--a^+),
$$

Unknowns $R_{m-2}^{-}(h\beta)$, $R_{m-2}^{+}(h\beta)$, $a^-$ and
$a^+$  we will find from the equation (\ref{eq.(4.19)}), using the
function $D_m(h\beta)$ defined by (\ref{(24)}). Then we obtain
explicit form of the function $u_m(h\beta )$ and find the optimal
coefficients $C_\beta(z)$ ($\beta=0,1,...N$) from (\ref{(28)}).

Thus Problem 4 and respectively Problems 3 and 2 will be solved.

In the next section we realize this algorithm for computation of
coefficients $C_{\beta}(z)$ of optimal interpolation formulas
(\ref{(1)}).

\section{Computation of coefficients of optimal interpolation formulas of the form (\ref{(1)})}

In this section we give the solution of Problem 4 for any $m\in \mathbb{N}$ and $N\geq m-1$.
We consider the cases $m=1$ and $m\geq 2$ separately.

\medskip

First we consider \textbf{the case $m=1$}. For this case we have the following
results

\begin{theorem}\label{THM5.1}
Coefficients of the optimal interpolation formula (\ref{(1)})
with equal spaced nodes in the space $W_2^{(1,0)}(0,1)$ have the
following form
\begin{equation}\label{eq.(5.1)}
\begin{array}{ll}
\mathring{C}_{\beta}(z)=&
\frac{1}{2(1-\E^{2h})}\Bigg[\mathrm{sgn}(z-h\beta-h)\cdot
(\E^{h\beta+2h-z}-\E^{z-h\beta})\\
&\qquad+\mathrm{sgn}(z-h\beta+h)\cdot
(\E^{h\beta-z}-\E^{z-h\beta+2h})\\
&\qquad+(1+\E^{2h})\cdot
\mathrm{sgn}(z-h\beta)\cdot
(\E^{z-h\beta}-\E^{h\beta-z})\Bigg],\ \ \beta=0,1,...,N.
\end{array}
\end{equation}

\end{theorem}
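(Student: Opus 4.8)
The plan is to specialize the algorithm of Section~4 to $m=1$ and solve Problem~4 in closed form. For $m=1$ the sum in (\ref{(12)}) is empty, so
$G_1(x)=\frac{\mathrm{sgn}\,x}{4}(\E^x-\E^{-x})$, which is the fundamental solution of $\frac{\d^2}{\d x^2}-1$. Moreover $Q_{2m-3}$, $R_{m-2}$ and $P_{m-2}$ all have degree $\le-1$ and therefore vanish, so the representation (\ref{eq.(4.20)}) of $u_1(h\beta)$ carries only the two unknown constants $a^-,a^+$: it equals $-\frac{\E^{h\beta}}{4}\E^{-z}+a^-\E^{-h\beta}$ for $\beta\le0$, equals $G_1(z-h\beta)$ for $0\le\beta\le N$, and equals $\frac{\E^{h\beta}}{4}\E^{-z}+a^+\E^{-h\beta}$ for $\beta\ge N$. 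Once $a^\pm$ are known, the coefficients come from (\ref{(28)}).

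First I would pin down the discrete operator. Specializing Theorem~\ref{THM4.1} to $m=1$ (where $\mathcal P_0(\lambda)\equiv p_0=1-\E^{2h}$ and there are no roots $\lambda_k$), or equivalently solving $D_1(h\beta)*G_1(h\beta)=\delta_d(h\beta)$ directly with an ansatz supported on $\{-1,0,1\}$, gives $D_1(h\beta)=B$ for $\beta=0$, $D_1(h\beta)=A$ for $|\beta|=1$, and $0$ otherwise, where $A=\frac{-2\E^h}{1-\E^{2h}}$ and $B=\frac{2(1+\E^{2h})}{1-\E^{2h}}$. The identity $B=-(\E^h+\E^{-h})A$ is precisely what makes $D_1$ annihilate both $\E^{h\beta}$ and $\E^{-h\beta}$ (Theorem~\ref{THM4.2}, parts 1--2).

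Next I would determine $a^-,a^+$ from (\ref{eq.(4.19)}). Since $D_1$ has support radius $1$ and kills $\E^{\pm h\beta}$, the equation $D_1*u_1(h\beta)=0$ is automatic for $\beta\le-2$ and $\beta\ge N+2$, and the only nontrivial relations sit at the seams $\beta=-1$ and $\beta=N+1$. Using that each outer branch lies in the kernel of $D_1$, the seam relations collapse to the matching of the outer branch to the middle branch at the adjacent node, i.e. $-\frac14\E^{-z}+a^-=G_1(z)$ and $\frac{\E}{4}\E^{-z}+a^+\E^{-1}=G_1(z-1)$ (recall $hN=1$). For $0<z<1$ these yield $a^-=\frac14\E^{z}$ and $a^+=-\frac14\E^{z}$. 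The decisive simplification is that with these values the two outer expressions coincide with $G_1(z-h\beta)$ itself: for $\beta\le0$ one has $z-h\beta>0$ and for $\beta\ge N$ one has $z-h\beta<0$, and in each case $\frac{\mathrm{sgn}(z-h\beta)}{4}(\E^{z-h\beta}-\E^{h\beta-z})$ reproduces the branch. Hence $u_1(h\beta)=G_1(z-h\beta)$ for \emph{every} integer $\beta$, which also makes the boundary indices $\beta=0,N$ require no separate treatment.

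Finally I would compute, from (\ref{(28)}),
\begin{equation*}
\mathring C_\beta(z)=A\,G_1(z-h\beta-h)+B\,G_1(z-h\beta)+A\,G_1(z-h\beta+h),
\end{equation*}
substitute $G_1$ together with the values of $A,B$, and collect the exponentials: the factor $\frac14$ in $G_1$ combines with the numerators $-2\E^h$ and $2(1+\E^{2h})$ to produce the common factor $\frac{1}{2(1-\E^{2h})}$, and the three pieces reorganize term by term into (\ref{eq.(5.1)}). By the existence and uniqueness proved in Section~3 (applicable here since $N+1\ge m=1$), this is the optimal coefficient. I expect the main obstacle to be purely bookkeeping: carefully tracking the three sign factors $\mathrm{sgn}(z-h\beta\pm h)$ and $\mathrm{sgn}(z-h\beta)$ through the simplification — these are exactly what generate the piecewise, two-subinterval-supported structure of the cardinal functions $\mathring C_\beta$ — rather than any conceptual difficulty, the latter being absorbed by the observation that $u_1(h\beta)=G_1(z-h\beta)$ holds for all $\beta$.
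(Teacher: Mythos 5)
Your proposal is correct and follows essentially the same route as the paper: specialize the Section~4 algorithm to $m=1$, determine $a^{\pm}=\pm\frac14\E^{z}$ from the equation $D_1*u_1=0$ at $\beta=-1$ and $\beta=N+1$, observe that $u_1(h\beta)$ then equals $G_1(z-h\beta)$ for all $\beta$ (the paper's (\ref{eq.(5.7)})), and evaluate the three-term convolution $A\,G_1(z-h\beta-h)+B\,G_1(z-h\beta)+A\,G_1(z-h\beta+h)$ to obtain (\ref{eq.(5.1)}). The only cosmetic difference is that you justify the reduction of the seam equations to branch-matching via the fact that $D_1$ annihilates $\E^{\pm h\beta}$, whereas the paper computes the two equations directly; the result is identical.
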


\begin{proof}
Assume $m=1$. In this case the function $u_m(h\beta)$, given by
equality (\ref{eq.(4.20)}), takes the form
\begin{equation}
u_1(h\beta)=\left\{
\begin{array}{ll}
-\frac{1}{4}\E^{h\beta-z}+a^-\E^{-h\beta},& \beta\leq 0,\\[2mm]
\frac{1}{4}\mathrm{sgn}(z-h\beta)\cdot (\E^{z-h\beta}-\E^{h\beta-z}),& 0\leq \beta\leq N,\\[2mm]
\frac{1}{4}\E^{h\beta-z}+a^+\E^{-h\beta},& \beta\geq N\\
\end{array}
\right. \label{eq.(5.2)}
\end{equation}
and satisfies the equation
\begin{equation}
D_1(h\beta)*u_1(h\beta)=0\mbox{ for } \beta<0\mbox{ and }  \beta>N,
\label{eq.(5.3)}
\end{equation}
where $D_1(h\beta)$ is defined by (\ref{(24)}) when $m=1$:
\begin{equation}
D_1(h\beta)=\frac{2}{1-\E^{2h}}\left\{
\begin{array}{ll}
0, &|\beta|\geq 2,\\
-\E^h,& |\beta|=1,\\
1+\E^{2h}, & \beta=0.
\end{array}
\right. \label{eq.(5.4)}
\end{equation}
In (\ref{eq.(5.2)})   $a^-$ and $a^+$ are unknowns. From
equation (\ref{eq.(5.3)}), using (\ref{eq.(5.2)}) and
(\ref{eq.(5.4)}), for $\beta=-1$ and $\beta=N+1$ we respectively
get
\begin{equation} \label{eq.(5.5)}
 a^-=\E^z/4,\ \
a^+=-\E^z/4.
\end{equation}
Then from (\ref{eq.(4.17)}), (\ref{eq.(4.18)}) taking into account
(\ref{eq.(5.5)}) we obtain
\begin{equation} \label{eq.(5.6)}
d=0,\ \ D=\E^z/4.
\end{equation}
Thus, putting (\ref{eq.(5.5)}) to (\ref{eq.(5.2)}) we have the
following explicit form of the function $u_1(h\beta)$:
\begin{equation}
u_1(h\beta)=\left\{
\begin{array}{ll}
-\frac{1}{4}(\E^{h\beta-z}-\E^{z-h\beta}),& \beta\leq 0,\\[2mm]
\frac{1}{4}\mathrm{sgn}(z-h\beta)\cdot (\E^{z-h\beta}-\E^{h\beta-z}),& 0\leq \beta\leq N,\\[2mm]
\frac{1}{4}(\E^{h\beta-z}-\E^{z-h\beta}),& \beta\geq N.\\
\end{array}
\right. \label{eq.(5.7)}
\end{equation}
Now from equality (\ref{(28)}) we get
$$
C_{\beta}(z)=D_1(h\beta)*u_1(h\beta),\ \ \beta=0,1,...,N.
$$
Hence, using (\ref{eq.(5.4)}), (\ref{eq.(5.7)}) and computing the
convolution $D_1(h\beta)*u_1(h\beta)$ for $\beta=0,1,...,N$ we get
(\ref{eq.(5.1)}). Theorem \ref{THM5.1} is proved.
\end{proof}

Now we calculate the norm of the error functional $\ell$ on the
space $W_2^{(1,0)}(0,1)$.

For $m=1$ taking into account (\ref{eq.(5.6)}) from
(\ref{(17)}) we have
$$
\sum\limits_{\gamma=0}^NC_{\gamma}(z)G_1(h\beta-h\gamma)=G_1(z-h\beta),\
\ \beta=0,1,...,N.
$$
Then from (\ref{(13)}) using the last equality we get
\begin{eqnarray*}
\left\| \mathring{\ell}\right\|^2&=&-\left(\sum\limits_{\beta=0}^NC_\beta(z)\left[\sum\limits_{\gamma=0}^NC_{\gamma}(z)
G_1(h\beta-h\gamma)-G_1(z-h\beta)\right]\right)+\sum\limits_{\beta=0}^NC_{\beta}(z)G_1(z-h\beta)\\
&=&\sum\limits_{\beta=0}^NC_{\beta}(z)G_1(z-h\beta).
\end{eqnarray*}
Hence taking into account (\ref{(12)}) we get the following

\begin{theorem}\label{THM5.2}
The square of the norm of the error functional
$\ell$ of the optimal interpolation formula (\ref{(1)}) on the
space $W_2^{(1,0)}(0,1)$ has the form
\begin{equation}\label{eq.(5.8)}
\left\| \mathring{\ell}
|W_2^{(1,0)*}(0,1)\right\|^2=\frac14\sum\limits_{\beta=0}^N\mathring{C}_{\beta}(z)\cdot
\mathrm{sgn}(z-h\beta)\cdot (\E^{z-h\beta}-\E^{h\beta-z}),
\end{equation}
where $\mathring{C}_{\beta}(z)$ are defined by (\ref{eq.(5.1)}).

\end{theorem}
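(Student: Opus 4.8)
The plan is to evaluate the general formula (\ref{(13)}) for $\|\ell\|^2$ at $m=1$ and then exploit the optimality system to collapse the quadratic expression into a single linear sum, after which a direct substitution of the kernel $G_1$ finishes the proof. First I would specialize (\ref{(13)}) to $m=1$, where the factor $(-1)^m$ becomes $-1$, so that
\[
\|\mathring\ell\|^2 = -\sum_{\beta=0}^N\sum_{\gamma=0}^N \mathring C_\beta(z)\mathring C_\gamma(z)\,G_1(h\beta-h\gamma) + 2\sum_{\beta=0}^N \mathring C_\beta(z)\,G_1(z-h\beta).
\]
Next I would invoke the structure of the optimality system (\ref{(17)}). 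For $m=1$ there are no polynomial multipliers $p_\alpha$ (the index range $\alpha=0,\dots,m-2$ is empty), and by (\ref{eq.(5.6)}) the constant $d$ vanishes; hence (\ref{(17)}) reduces to the interpolation-type identity
\[
\sum_{\gamma=0}^N \mathring C_\gamma(z)\,G_1(h\beta-h\gamma) = G_1(z-h\beta), \qquad \beta=0,1,\dots,N.
\]

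The key algebraic step is then to regroup the quadratic form so that this identity can be applied. Adding and subtracting, I would write
\[
\|\mathring\ell\|^2 = -\sum_{\beta=0}^N \mathring C_\beta(z)\Bigg[\sum_{\gamma=0}^N \mathring C_\gamma(z)\,G_1(h\beta-h\gamma) - G_1(z-h\beta)\Bigg] + \sum_{\beta=0}^N \mathring C_\beta(z)\,G_1(z-h\beta),
\]
so that the bracketed term is exactly the left-hand side minus the right-hand side of the identity above and therefore vanishes for every $\beta$. This collapses the expression to the single linear sum $\|\mathring\ell\|^2 = \sum_{\beta=0}^N \mathring C_\beta(z)\,G_1(z-h\beta)$. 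This reduction, using the normal equations that define the optimal coefficients to annihilate the quadratic part of the error norm, is the one genuine idea in the argument.

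Finally I would substitute the explicit form of the kernel from (\ref{(12)}) at $m=1$. Since the inner sum $\sum_{k=1}^{m-1}$ is empty for $m=1$, we have $G_1(x)=\tfrac{\mathrm{sgn}\,x}{4}(\E^{x}-\E^{-x})$, and putting $x=z-h\beta$ gives $G_1(z-h\beta)=\tfrac{\mathrm{sgn}(z-h\beta)}{4}(\E^{z-h\beta}-\E^{h\beta-z})$. Inserting this into the collapsed sum yields (\ref{eq.(5.8)}). I expect no real obstacle here: everything past the collapse of the quadratic form is routine substitution, and the only point that requires genuine care is confirming that at $m=1$ both the polynomial terms and $d$ drop out of (\ref{(17)}), which is precisely what (\ref{eq.(5.6)}) and the empty index range guarantee.
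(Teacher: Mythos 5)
Your proposal is correct and follows exactly the paper's own argument: specialize (\ref{(13)}) to $m=1$, use (\ref{eq.(5.6)}) to reduce (\ref{(17)}) to $\sum_{\gamma}\mathring C_\gamma(z)G_1(h\beta-h\gamma)=G_1(z-h\beta)$, regroup the quadratic form so the bracketed normal-equation residual vanishes, and substitute $G_1(x)=\tfrac{\mathrm{sgn}\,x}{4}(\E^{x}-\E^{-x})$. No differences worth noting.
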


\begin{remark}\label{Rem3}
We note that in \cite{Hay15} coincidence of the optimal interpolation formula of the form (\ref{(1)}) in the space $W_2^{(1,0)}(0,1)$  with the interpolation spline $S_1(x)$, constructed in Theorem 3.2 of the work \cite{ShadHay13}, minimizing the semi-norm in this space is shown. The spline $S_1(x)$, constructed in Theorem 3.2 of \cite{ShadHay13}, is used in \cite{Avez10} to
determine the total incident solar radiation at each time step and the temperature
dependence of the thermophysical properties of the air and water.
\end{remark}

Now we consider \textbf{the case $m\geq 2$}. In this case the following is true.

\begin{theorem}\label{THM5}
Coefficients of optimal interpolation formulas (\ref{(1)})
with equal spaced nodes $x_{\beta}=h\beta$ in the space $W_2^{(m,m-1)}(0,1)$ when $m\geq 2$ have the
following form
\begin{eqnarray*}
\mathring{C}_0(z)&=&\frac{1}{p}\Bigg[2CG_m(z)-2\E^h\bigg[G_m(z-h)-\frac{1}{4}\E^{-h-z}+a^-\E^h+\sum\limits_{i=0}^{2m-3}q_i(-h)^i+
\sum\limits_{i=0}^{m-2}r_i^-(-h)^i\bigg]\\
&&+\sum\limits_{k=1}^{m-1}\frac{A_k}{\lambda_k}\Bigg\{\sum\limits_{\gamma=0}^N\lambda_k^{\gamma}G_m(z-h\gamma)+M_k+\lambda_k^NN_k\Bigg\}\Bigg],\\
\mathring{C}_{\beta}(z)&=&\frac{1}{p}\Bigg[2CG_m(z-h\beta)-2\E^h\bigg[G_m(z-h(\beta-1))+G_m(z-h(\beta+1))\bigg]\\
&&
+\sum\limits_{k=1}^{m-1}\frac{A_k}{\lambda_k}\Bigg\{\sum\limits_{\gamma=0}^N\lambda_k^{|\beta-\gamma|}G_m(z-h\gamma)+
\lambda_k^{\beta}M_k+\lambda_k^{N-\beta}N_k\Bigg\}\Bigg],\\
\mathring{C}_N(z)&=&\frac{1}{p}\Bigg[2CG_m(z-1)-2\E^h\bigg[G_m(z-1+h)+\frac{1}{4}\E^{1+h-z}+a^+\E^{-1-h}\\
&&-\sum\limits_{i=0}^{2m-3}q_i(1+h)^i+
\sum\limits_{i=0}^{m-2}r_i^+(1+h)^i\bigg]\\
&&
+\sum\limits_{k=1}^{m-1}\frac{A_k}{\lambda_k}\Bigg\{\sum\limits_{\gamma=0}^N\lambda_k^{N-\gamma}G_m(z-h\gamma)+\lambda_k^NM_k+N_k\Bigg\}\Bigg],
\end{eqnarray*}
where
\begin{eqnarray*}
M_k&=&\frac{\lambda_k\E^{-z}}{4(\lambda_k-\E^h)}+\frac{a^-\lambda_k\E^h}{1-\lambda_k \E^h}+\sum\limits_{i=1}^{2m-3}q_i(-h)^i\sum\limits_{\nu=1}^i
\frac{\lambda_k^{\nu}\Delta^\nu0^i}{(1-\lambda_k)^{\nu+1}}+\frac{q_0\lambda_k}{1-\lambda_k}\\
&&\qquad\qquad\qquad +\sum\limits_{i=1}^{m-2}r_i^-(-h)^i\sum\limits_{\nu=1}^i\frac{\lambda_k^{\nu}\Delta^\nu 0^i}{(1-\lambda_k)^{\nu+1}}+
\frac{r_0^-\lambda_k}{(1-\lambda_k)},\\
N_k&=&\frac{\lambda_k\E^{1-z+h}}{4(1-\lambda_k\E^h)}+\frac{a^+\lambda_k}{\E(\E^h-\lambda_k)}-\sum\limits_{i=1}^{2m-3}q_i\sum\limits_{j=1}^i{i \choose j}
h^j\sum\limits_{\nu=1}^j\frac{\lambda_k^\nu\Delta^\nu 0^j}{(1-\lambda_k)^{\nu+1}}-\sum\limits_{i=0}^{2m-3}q_i\frac{\lambda_k}{1-\lambda_k}\\
&&\qquad\qquad\qquad +\sum\limits_{i=1}^{m-2}r_i^+\sum\limits_{j=1}^i{i\choose j}h^j\sum\limits_{\nu=1}^j\frac{\lambda_k^{\nu}\Delta^{\nu}0^j}{(1-\lambda_k)^{\nu+1}}
+\sum\limits_{i=0}^{m-2}\frac{r_i^+\lambda_k}{(1-\lambda_k)},\\
p&=&p_{2m-2}=1-\E^{2h}+2\E^h\left(h+\frac{h^3}{3!}+...+\frac{h^{2m-3}}{(2m-3)!}\right), \\
a^-&=&G_m(z)+\frac{1}{4}\E^{-z}-q_0-r_0^-,\\
a^+&=&\E\left[G_m(z-1)-\frac{1}{4}\E^{1-z}+\sum\limits_{i=0}^{2m-3}q_i-\sum\limits_{i=0}^{m-2}r_i^+\right],\\
\end{eqnarray*}
and $r_i^-$, $r_i^+$, $i=0,1,...,m-2$ satisfy the system (\ref{(491)}) and (\ref{(501)}) of $2m-2$ linear equations,
$\lambda_k,\ A_k$ and $C$ are given in Theorem \ref{THM4.1}, $q_i$ are defined by (\ref{(30)}).
\end{theorem}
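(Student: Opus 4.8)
The plan is to execute the algorithm of Section~4 explicitly: first pin down the unknown constants $a^-,a^+$ and the unknown polynomials $R_{m-2}^-,R_{m-2}^+$ in the representation (\ref{eq.(4.20)}) of $u_m(h\beta)$ by imposing the homogeneous equation (\ref{eq.(4.19)}), and then read the coefficients off from $\mathring C_\beta(z)=D_m(h\beta)*u_m(h\beta)$ through (\ref{(28)}).

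To determine the unknowns, let $w(h\beta)$ denote the $\beta\le 0$ branch of (\ref{eq.(4.20)}) extended to all integers. Since $w$ is a linear combination of $\E^{h\beta}$, $\E^{-h\beta}$ and of the monomials $(h\beta)^i$ with $i\le 2m-3$, parts~1)--3) of Theorem~\ref{THM4.2} give $D_m(h\beta)*w(h\beta)\equiv 0$; hence for $\beta<0$ equation (\ref{eq.(4.19)}) reduces to $D_m*(u_m-w)(\beta)=0$, and $u_m-w$ is supported on $\beta\ge0$. For $\beta\le -2$ every contributing index has $|\beta-\gamma|\ge2$, so by (\ref{(24)}) one gets $D_m*(u_m-w)(\beta)=\frac1p\sum_{k=1}^{m-1}A_k\lambda_k^{-\beta-1}\widetilde S_k^-$ with $\widetilde S_k^-=\sum_{\gamma\ge0}\lambda_k^{\gamma}(u_m-w)(h\gamma)$; since the sequences $\lambda_1^{-\beta},\dots,\lambda_{m-1}^{-\beta}$ are linearly independent (the $\lambda_k$ are the distinct roots of Theorem~\ref{THM4.1}), this forces $\widetilde S_k^-=0$ for $k=1,\dots,m-1$. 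The one leftover equation, at $\beta=-1$, then collapses to $\frac{-2\E^h}{p}(u_m-w)(0)=0$, i.e. to the matching condition $u_m(0)=G_m(z)$, which is precisely the stated formula for $a^-$. The symmetric argument on $\beta>N$ (extending the $\beta\ge N$ branch, and using $hN=1$) gives $a^+$ together with the conditions $\widetilde S_k^+=0$. Writing out the geometric and power sums $\sum_{\gamma}\lambda_k^{\gamma}(h\gamma)^i$ via the finite differences $\Delta^\nu 0^i$ turns $\widetilde S_k^\pm=0$ into the $2m-2$ linear equations (\ref{(491)})--(\ref{(501)}) for $r_i^-,r_i^+$, which one then solves.

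To obtain the coefficients, substitute the now-explicit $u_m$ into (\ref{(28)}) and split $D_m$ from (\ref{(24)}) into its tail $\frac1p\sum_kA_k\lambda_k^{|\beta|-1}$ together with the two corrections $\frac{2C}{p}$ at $\beta=0$ and $\frac{-2\E^h}{p}$ at $\beta=\pm1$. Convolving the tail with $u_m$ gives $\frac1p\sum_k\frac{A_k}{\lambda_k}\big(\sum_{\gamma=0}^N\lambda_k^{|\beta-\gamma|}G_m(z-h\gamma)+\lambda_k^{\beta}M_k+\lambda_k^{N-\beta}N_k\big)$, the left tail $\sum_{\gamma\le-1}$ and the right tail $\sum_{\gamma\ge N+1}$ being summed as geometric/power series (again through $\Delta^\nu 0^i$) and collected into $M_k$ and $N_k$; the $\frac{2C}{p}$ correction contributes $\frac{2C}{p}G_m(z-h\beta)$, and the $\frac{-2\E^h}{p}$ correction contributes $\frac{-2\E^h}{p}\big[u_m(h(\beta-1))+u_m(h(\beta+1))\big]$. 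For interior $1\le\beta\le N-1$ both neighbours lie in $[0,N]$, so the last correction is $-2\E^h[G_m(z-h(\beta-1))+G_m(z-h(\beta+1))]$, which yields $\mathring C_\beta$. For $\beta=0$ the neighbour $\gamma=-1$ sits on the left branch, so one correction term becomes the left-branch value $u_m(-h)=-\frac14\E^{-h-z}+a^-\E^{h}+\sum_iq_i(-h)^i+\sum_ir_i^-(-h)^i$, producing $\mathring C_0$; symmetrically $\gamma=N+1$ on the right branch produces $\mathring C_N$.

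The main obstacle is the first part: carrying out the tail summations that define $M_k$ and $N_k$ and, above all, assembling the $2m-2$ system (\ref{(491)})--(\ref{(501)}) and establishing that it is uniquely solvable, since the boundary case distinctions all feed into it. Once $u_m$ is fixed, the second part is organized bookkeeping of the convolution (\ref{(28)}) under the above decomposition of $D_m$.
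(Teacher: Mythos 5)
Your proposal is correct and follows essentially the same route as the paper: determine $a^\pm$ and the polynomials $R_{m-2}^\pm$ by forcing $D_m(h\beta)*u_m(h\beta)=0$ outside $[0,1]$, then read off $\mathring{C}_\beta(z)=D_m(h\beta)*u_m(h\beta)$ by splitting $D_m$ into its $\beta=0$ value, its $|\beta|=1$ values, and its geometric tail, exactly as the paper does. The only difference is organizational: the paper imposes the convolution equation at the $2m-2$ points $\beta=-1,\dots,-(m-1)$ and $\beta=N+1,\dots,N+m-1$ (its system (\ref{(491)})--(\ref{(501)})) and gets $a^\pm$ from matching the branches of (\ref{eq.(4.20)}) at $\beta=0$ and $\beta=N$, whereas your conditions $\widetilde S_k^\pm=0$ are an equivalent (Vandermonde-type) repackaging that has the additional merit of showing why finitely many conditions already enforce the equation for \emph{all} $h\beta\notin[0,1]$.
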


\begin{proof}
Here we find explicit form of $u_m(h\beta)$ then from (\ref{(28)}), using $D_m(h\beta)$, we find optimal coefficients $C_{\beta}(z)$ .

\newpage

In order to find $u_m(h\beta)$ we should find unknowns $r_i^-,\ r_i^+,$ $i=0,1,...,m-2$ and $a^-$, $a^+$. First we find $a^-$ and $a^+$.
From (\ref{eq.(4.20)}) when $\beta=0$ and $\beta=N$ we get
\begin{eqnarray}
a^-&=&G_m(z)+\frac{1}{4}\E^{-z}-q_0-r_0^-,\label{(49)}\\
a^+&=&\E\left(G_m(z-1)-\frac{1}{4}\E^{1-z}+\sum\limits_{i=0}^{2m-3}q_i-\sum\limits_{i=0}^{m-2}r_i^+\right). \label{(50)}
\end{eqnarray}

Further, we find $2m-2$ unknowns $r_i^-,\ r_i^+,$ $i=0,1,...,m-2$.
For $r_i^-,\ r_i^+,$ $i=0,1,...,m-2$  using (\ref{eq.(4.20)}), (\ref{(49)}) and (\ref{(50)}) from equation (\ref{eq.(4.19)})
we have the following system of $2m-2$ linear equations
\begin{equation}
\begin{array}{l}
\sum\limits_{\gamma=1}^{\infty}D_m(h\beta+h\gamma)\bigg[\frac{\E^{-z}}{2}\sinh(h\gamma)+\E^{h\gamma}G_m(z)+\sum\limits_{i=0}^{2m-3}q_i(-h\gamma)^i-\E^{h\gamma}q_0\\
\qquad\qquad+\sum\limits_{i=0}^{m-2}r_i^-(-h\gamma)^i-\E^{h\gamma}r_0^-\bigg] +\sum\limits_{\gamma=0}^ND_m(h\beta-h\gamma)G_m(z-h\gamma)\\
\qquad\qquad+\sum\limits_{\gamma=1}^{\infty}D_m(h(N+\gamma-\beta))\bigg[\frac{\E^{1-z}}{2}\sinh(h\gamma)+\E^{-h\gamma}G_m(z-1)\\
\qquad\qquad+\sum\limits_{i=0}^{2m-3}q_i(\E^{-h\gamma}-(1+h\gamma)^i)+\sum\limits_{i=0}^{m-2}r_i^+((1+h\gamma)^i-\E^{-h\gamma})\bigg]=0,
\end{array}\label{(51)}
\end{equation}
where $\beta=-1,-2,...,-(m-1)$ and $\beta=N+1,N+2,...,N+m-1$.

From the system (\ref{(51)}) in the cases $\beta=-1,-2,...,-(m-1)$ replacing $\beta$ by $-\beta$, using (\ref{(24)})  and after some simplifications we have the following system of $m-1$ linear equations
\begin{equation}\label{(491)}
\sum\limits_{i=0}^{m-2}B_{\beta i}^-r_i^-+\sum\limits_{i=0}^{m-2}B_{\beta i}^+r_i^+=T_{\beta},\   \beta=1,2,...,m-1,  \\
\end{equation}
where
\begin{equation*}
\begin{array}{ll}
B_{\beta 0}^-=&\sum\limits_{k=1}^{m-1}\frac{A_k}{\lambda_k}\sum\limits_{\gamma=1}^{\infty}\lambda_k^{|\beta-\gamma|}(1-\E^{h\gamma})
+2C(1-\E^{h\beta})-2\E^h(2-\E^{h(\beta-1)}-\E^{h(\beta+1)}),\\
B_{\beta i}^-=&(-h)^i\left[2C\beta^i-2\E^h((\beta-1)^i+(\beta+1)^i)+\sum\limits_{k=1}^{m-1}\frac{A_k}{\lambda_k}\sum\limits_{\gamma=1}^{\infty}
\lambda_k^{|\beta-\gamma|}\gamma^i\right],\\
B_{\beta 0}^+=&\sum\limits_{k=1}^{m-1}\frac{A_k\lambda_k^{N+\beta}(\E^h-1)}{(1-\lambda_k)(\E^h-\lambda_k)},\\
B_{\beta i}^+=&\sum\limits_{k=1}^{m-1}\frac{A_k}{\lambda_k}\lambda_k^{N+\beta}\left(\sum\limits_{j=1}^i{i\choose j}h^j\sum\limits_{\nu=1}^j\frac{\lambda_k^\nu\Delta^\nu 0^j}{(1-\lambda_k)^{\nu+1}}+\frac{\lambda_k(\E^h-1)}{(1-\lambda_k)(\E^h-\lambda_k)}\right),\\
T_\beta=&-\Bigg\{\E^{-z}\bigg[C\sinh(h\beta)-\E^h\left[\sinh(h(\beta-1))+\sinh(h(\beta+1))\right]\bigg]+G_m(z)
\left[2C\E^{h\beta}-2\E^h(\E^{h(\beta-1)}+\E^{h(\beta+1)})\right]\\
&+\sum\limits_{i=1}^{2m-3}q_i(-h)^i\bigg[2C\beta^i-2\E^h((\beta-1)^i+(\beta+1)^i)\bigg]
+q_0\bigg[2C(1-\E^{h\beta})-2\E^h(2-\E^{h(\beta-1)}-\E^{h(\beta+1)})\bigg]\Bigg\}\\
&-\sum\limits_{k=1}^{m-1}\frac{A_k}{\lambda_k}\Bigg\{\sum\limits_{\gamma=0}^N\lambda_k^{\gamma+\beta}G_m(z-h\gamma)
+\sum\limits_{\gamma=1}^{\infty}\lambda_k^{|\beta-\gamma|}\bigg[\frac{\E^{-z}}{2}\sinh(h\gamma)+\E^{h\gamma}G_m(z)+
\sum\limits_{i=1}^{2m-3}q_i(-h\gamma)^i+q_0(1-\E^{h\gamma})\bigg]\\
&+\lambda_k^{N+\beta}\bigg[\frac{\E^{1-z}\lambda_k\sinh(h)}{2(\lambda_k^2+1-2\lambda_k\cosh(h))}+\frac{\lambda_k}{\E^h-\lambda_k}G_m(z-1)+
\sum\limits_{i=0}^{2m-3}q_i\left(\frac{\lambda_k(1-\E^h)}{(\E^h-\lambda_k)(1-\lambda_k)}-\sum\limits_{j=1}^i{i\choose j}h^j\sum\limits_{\nu=1}^j\frac{\lambda_k^\nu\Delta^\nu 0^j}{(1-\lambda_k)^{\nu+1}}\right)\bigg]\Bigg\},\\
&\beta=1,2,...,m-1, \ \ i=1,2,...,m-2
\end{array}
\end{equation*}

Now, from (\ref{(51)}) in the cases $\beta=N+1,N+2,...,N+m-1$ replacing $\beta$ by $N+\beta$, using (\ref{(24)}), doing some calculations
we get the next system of $m-1$ linear equations
\begin{equation}\label{(501)}
\sum\limits_{i=0}^{m-2}A_{\beta i}^-r_i^-+\sum\limits_{i=0}^{m-2}A_{\beta i}^+r_i^+=S_{\beta},\  \beta=1,2,...,m-1,
\end{equation}
where
\begin{equation*}
\begin{array}{ll}
A_{\beta 0}^-=&\sum\limits_{k=1}^{m-1}\frac{A_k\lambda_k^{N+\beta}(1-\E^h)}{(1-\lambda_k)((1-\E^h\lambda_k)},\\
A_{\beta i}^-=&(-h)^i\sum\limits_{k=1}^{m-1}\frac{A_k}{\lambda_k}\lambda_k^{N+\beta}\sum\limits_{\nu=1}^i\frac{\lambda_k^\nu\Delta^\nu 0^i}{(1-\lambda_k)^{\nu+1}},\\
A_{\beta 0}^+=&\sum\limits_{k=1}^{m-1}\frac{A_k}{\lambda_k}\sum\limits_{\gamma=1}^{\infty}\lambda_k^{|\beta-\gamma|}(1-\E^{-h\gamma})+2C(1-\E^{-h\beta})
  -2\E^h\bigg[2-\E^{-h(\beta-1)}-\E^{-h(\beta+1)}\bigg],\\
A_{\beta i}^+=&\sum\limits_{k=1}^{m-1}\frac{A_k}{\lambda_k}\sum\limits_{\gamma=1}^{\infty}\lambda_k^{|\beta-\gamma|}\bigg[\sum\limits_{j=1}^i{i\choose j}(h\gamma)^j+1-\E^{-h\gamma}\bigg]+2C\bigg[\sum\limits_{j=1}^i{i \choose j}(h\beta)^j+1-\E^{-h\beta}\bigg]\\
&-2\E^h\bigg[\sum\limits_{j=1}^i{i\choose j}\left((h(\beta-1))^j+(h(\beta+1))^j\right)+2-\E^{-h(\beta-1)}-\E^{-h(\beta+1))}\bigg],\\
S_\beta=&-\Bigg\{\E^{1-z}\bigg[C\sinh(h\beta)-\E^h\left[\sinh(h(\beta-1))+\sinh(h(\beta+1))\right]\bigg]+G_m(z-1)
\left[2C\E^{-h\beta}-2\E^h(\E^{-h(\beta-1)}+\E^{-h(\beta+1)})\right]\\
&+\sum\limits_{i=0}^{2m-3}q_i\bigg[2C\bigg(\E^{-h\beta}-1-\sum\limits_{j=1}^i{i\choose j}(h\beta)^j\bigg)
-2\E^{h}\bigg(\E^{-h(\beta-1)}+\E^{-h(\beta+1)}-2-\sum\limits_{j=1}^i{i\choose j}\left((h(\beta-1))^j+(h(\beta+1))^j\right)\bigg)\bigg]\Bigg\}\\
&-\sum\limits_{k=1}^{m-1}\frac{A_k}{\lambda_k}\Bigg\{\sum\limits_{\gamma=0}^N\lambda_k^{N+\beta-\gamma}G_m(z-h\gamma)
+\lambda_k^{N+\beta}\bigg[\frac{\E^{-h}\lambda_k \sinh(h)}{2(\lambda_k^2+1-2\lambda_k\cosh(h))}+G_m(z)\frac{\lambda_k\E^h}{1-\E^h\lambda_k}+\sum\limits_{i=1}^{2m-3}q_i(-h)^i\sum\limits_{\nu=1}^i
\frac{\lambda_k^{\nu}\Delta^\nu 0^i}{(1-\lambda_k)^{\nu+1}}\\
&+\frac{q_0\lambda_k(1-\E^h)}{(1-\lambda_k)(1-\lambda_k\E^h)}\bigg]+\sum\limits_{\gamma=1}^{\infty}\lambda_k^{|\beta-\gamma|}
\bigg[\frac{\E^{1-z}}{2}\sinh(h\gamma)+\E^{-h\gamma}G_m(z-1)+\sum\limits_{i=0}^{2m-3}q_i\left(\E^{-h\gamma}-1-\sum\limits_{j=1}^i{i\choose j}(h\gamma)^j\right)\bigg].\\
&\beta=1,2,...,m-1, \ \ i=1,2,...,m-2,
\end{array}
\end{equation*}

Further from (\ref{(28)}), using (\ref{(24)}) and (\ref{eq.(4.20)}) we get the optimal coefficients $C_{\beta}$, $\beta=0,1,...,N$, which are given in the assertion of the Theorem.

Theorem \ref{THM5} is proved
\end{proof}

For $m=2$ from Theorem \ref{THM5} we get the following result which is Theorem 3 of \cite{Hay14}.

\begin{corollary}\label{Cor1}(Theorem 3 of \cite{Hay14}).
Coefficients of optimal interpolation formula (\ref{(1)})
with equal spaced nodes in the space $W_2^{(2,1)}(0,1)$ have the following form
\begin{eqnarray*}
\mathring{C}_0(z)&=&\frac{1}{p}\Bigg\{2C G_2(z)-2\E^h\bigg[G_2(z-h)-\frac{1}{4}\E^{-h-z}+a^-\E^h-\frac{1}{2}h+r_0^-\bigg]\\
&&
+\frac{A_1}{\lambda_1}\bigg[\sum\limits_{\gamma=0}^N\lambda_1^{\gamma}G_2(z-h\gamma)+M_1+\lambda_1^NN_1\bigg]\Bigg\},\\
\mathring{C}_\beta(z)&=&\frac{1}{p}\Bigg\{2CG_2(z-h\beta)-2\E^h\bigg[G_2(z-h(\beta-1))+G_2(z-h(\beta+1))\bigg]\\
&&+\frac{A_1}{\lambda_1}\bigg[\sum\limits_{\gamma=0}^N\lambda_1^{|\beta-\gamma|}G_2(z-h\gamma)+\lambda_1^{\beta}M_1+\lambda_1^{N-\beta}N_1\bigg]\Bigg\},\ \ \beta=1,2,...,N-1,\\
\mathring{C}_N(z)&=&\frac{1}{p}\Bigg\{2CG_2(z-1)-2\E^h\bigg[G_2(z-1+h)
+\frac{\E^{1+h}}{4\E^z}+\frac{a^+}{\E^{1+h}}-\frac{1}{2}(1+h)+r_0^+\bigg]\\
&&
+\frac{A_1}{\lambda_1}\bigg[\sum\limits_{\gamma=0}^N\lambda_1^{N-\gamma}G_2(z-h\gamma)+\lambda_1^NM_1+N_1\bigg]\Bigg\},
\end{eqnarray*}
where
\begin{eqnarray*}
M_1&=&\frac{\lambda_1\E^{-z}}{4(\lambda_1-\E^h)}+\frac{a^{-}\lambda_1\E^{h}}{1-\lambda_1\E^h}-
\frac{h\lambda_1}{2(1-\lambda_1)^2}+\frac{r_0^{-}\lambda_1}{1-\lambda_1},\\
N_1&=&\frac{\lambda_1\E^{1-z+h}}{4(1-\lambda_1\E^h)}+\frac{a^{+}\lambda_1}{\E(\E^h-\lambda_1)}-\frac{h\lambda_1}{2(1-\lambda_1)^2}-
\frac{\lambda_1}{2(1-\lambda_1)}+\frac{r_0^{+}\lambda_1}{1-\lambda_1},\\
\lambda_1&=&\frac{h(\E^{2h}+1)-\E^{2h}+1-(\E^h-1)\sqrt{h^2(\E^h+1)^2+2h(1-\E^{2h})}}{1-\E^{2h}+2h\E^h},
\end{eqnarray*}
$$
\begin{array}{ll}
G_2(z)=\displaystyle\frac{\mathrm{sgn}z}{2}\bigg(\sinh z-z\bigg),&\displaystyle p=1-\E^{2h}+2h\E^{h},\\
C=\displaystyle 1+2\E^h+\E^{2h}-\frac{\E^h(\lambda_1^2+1)}{\lambda_1}, & A_1=\displaystyle \frac{2(\lambda_1-1)(\lambda_1(\E^{2h}+1)-\E^h(\lambda_1^2+1))}{\lambda_1+1}\\
\displaystyle a^{-}=G_2(z)+\frac{\E^{-z}}{4}-r_0^{-},&\displaystyle a^{+}=\E\bigg(G_2(z-1)-\frac{\E^{1-z}}{4}-r_0^{+}+\frac{1}{2}\bigg),\\
\displaystyle r_0^{-}=\frac{T_1A_{10}^{+}-S_1B_{10}^{+}}{B_{10}^{-}A_{10}^{+}-B_{10}^{+}A_{10}^{-}},&
\displaystyle r_0^{+}=\frac{S_1B_{10}^{-}-T_1A_{10}^{-}}{B_{10}^{-}A_{10}^{+}-B_{10}^{+}A_{10}^{-}},\\
\end{array}
$$
here
\begin{eqnarray*}
B_{10}^{-}&=&2C(1-\E^h)-2\E^h(1-\E^{2h})+\sum\limits_{\gamma=1}^\infty A_1\lambda_1^{\gamma-2}(1-\E^{h\gamma}),\\
B_{10}^{+}&=&\frac{A_1\lambda_1^{N+1}(\E^h-1)}{(1-\lambda_1)(\E^h-\lambda_1)},\\
\end{eqnarray*}
\begin{eqnarray*}
A_{10}^{-}&=&A_1\lambda_1^N\sum\limits_{\gamma=1}^\infty \lambda_1^{\gamma}(1-\E^{h\gamma}),\\
A_{10}^{+}&=&2C(1-\E^{-h})-2\E^h(1-\E^{-2h})+\sum\limits_{\gamma=1}^\infty A_1\lambda_1^{\gamma-2}(1-\E^{-h\gamma}),\\
T_1&=&-\bigg[\E^{-z}(C\sinh(h)-\E^h\sinh(2h))+G_2(z)(2C\E^h-2\E^h(1+\E^{2h})-h(C-2\E^h))\bigg]\\
&&-\frac{A_1}{\lambda_1}\bigg[\sum\limits_{\gamma=0}^N\lambda_1^{\gamma+1}G_2(z-h\gamma)+\sum\limits_{\gamma=1}^\infty\lambda_1^{\gamma-1}
\bigg(\frac{\E^{-z}\sinh(h\gamma)}{2}+\E^{h\gamma}G_2(z)-\frac{h\gamma}{2}\bigg)\\
&&
+\lambda_1^{N+1}\bigg(\frac{\E^{1-z}\lambda_1\sinh(h)}{2(\lambda_1^2+1-2\lambda_1\cosh(h))}+
\frac{\lambda_1 G_2(z-1)}{\E^h-\lambda_1}+\frac{1}{2}\bigg(\frac{\lambda_1(1-\E^h)}{(\E^h-\lambda_1)(1-\lambda_1)}-
\frac{\lambda_1 h}{(1-\lambda_1)^2}\bigg)\bigg)\bigg],\\
S_1&=&-\bigg[\E^{1-z}(C\sinh(h)-\E^h\sinh(2h))+G_2(z-1)(2C\E^{-h}-2\E^h(1+\E^{-2h}))\\
&&
+C(\E^{-h}-1-h)-\E^h(\E^{-2h}-1-2h)\bigg]-\frac{A_1}{\lambda_1}\bigg[\sum\limits_{\gamma=0}^N\lambda_1^{N-\gamma+1}G_2(z-h\gamma)\\
&&
+\sum\limits_{\gamma=1}^\infty\lambda_1^{\gamma-1}\bigg(\frac{\E^{1-z}\sinh(h\gamma)}{2}+\E^{-h\gamma}G_2(z-1)+\frac{1}{2}(\E^{-h\gamma}-1-h\gamma)\bigg)\\
&&
+\lambda_1^{N+1}\bigg(\frac{\E^{-z}\lambda_1\sinh(h)}{2(\lambda_1^2+1-2\lambda_1\cosh(h))}+
\frac{\lambda_1 \E^h G_2(z)}{1-\E^h\lambda_1}-\frac{\lambda_1 h}{2(1-\lambda_1)^2}\bigg)\bigg)\bigg].
\end{eqnarray*}
\end{corollary}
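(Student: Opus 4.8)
The plan is to pin down, one family at a time, the four sets of unknowns that enter the ansatz (\ref{eq.(4.20)}) for the auxiliary function $u_m(h\beta)$ — the two constants $a^-,a^+$ and the two polynomials $R_{m-2}^-(h\beta)$, $R_{m-2}^+(h\beta)$, i.e. their coefficients $r_i^-,r_i^+$ for $i=0,1,\dots,m-2$ — and then to read off the optimal coefficients from the convolution (\ref{(28)}), $\mathring{C}_\beta(z)=D_m(h\beta)*u_m(h\beta)$, using the explicit discrete operator $D_m$ of Theorem \ref{THM4.1}. The middle branch of $u_m$ is already known, $u_m(h\beta)=G_m(z-h\beta)$ for $0\le\beta\le N$, so the entire substantive effort is the determination of the outer branches and the evaluation of the convolution; once $u_m$ is completely explicit, the three displayed cases $\beta=0$, $1\le\beta\le N-1$, and $\beta=N$ follow by direct substitution into (\ref{(28)}).

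First I would fix $a^-$ and $a^+$ by enforcing consistency of the three branches of (\ref{eq.(4.20)}) at the junction indices. Evaluating the $\beta\le 0$ branch at $\beta=0$ and matching it to $G_m(z)$ gives (\ref{(49)}), and matching the $\beta\ge N$ branch at $\beta=N$ to $G_m(z-1)$ gives (\ref{(50)}); both express $a^\pm$ in terms of $G_m$, the $q_i$ of (\ref{(30)}), and the still-unknown endpoint values $r_0^\mp$.

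Next I would impose the homogeneous equation (\ref{eq.(4.19)}), $D_m(h\beta)*u_m(h\beta)=0$, at the $2(m-1)$ indices just outside $[0,1]$, namely $\beta=-1,\dots,-(m-1)$ and $\beta=N+1,\dots,N+m-1$. Writing the convolution with $D_m$ in the form (\ref{(24)}) splits it into a finite part (the sum over $0\le\gamma\le N$, where $u_m=G_m(z-h\gamma)$) and two semi-infinite tails (over $\gamma<0$ and $\gamma>N$, where $u_m$ is the exponential-plus-polynomial expression carrying $a^\pm$, $Q_{2m-3}$ and $R_{m-2}^\pm$). Since $D_m$ is built from the geometric factors $\lambda_k^{|\beta|-1}$, each tail reduces to sums of the type $\sum_{\gamma\ge 1}\lambda_k^{|\beta-\gamma|}\gamma^i$ and $\sum_{\gamma\ge 1}\lambda_k^{|\beta-\gamma|}\E^{\pm h\gamma}$. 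The former are closed by the finite-difference identity $\sum_{\gamma\ge 1}\gamma^i\lambda^\gamma=\sum_{\nu=1}^i\lambda^\nu\Delta^\nu 0^i/(1-\lambda)^{\nu+1}$ and the latter by ordinary geometric series; these closed forms are exactly what produce the entries $B_{\beta i}^\pm$, $A_{\beta i}^\pm$ and the right-hand sides $T_\beta$, $S_\beta$. After the substitution $\beta\mapsto-\beta$ the negative indices yield system (\ref{(491)}), and after $\beta\mapsto N+\beta$ the indices beyond $N$ yield (\ref{(501)}), a square linear system of order $2(m-1)$ for the $r_i^\pm$.

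The main obstacle is precisely this convolution bookkeeping: keeping the absolute value $|\beta-\gamma|$ straight so that each tail splits correctly into its $\gamma<\beta$ and $\gamma>\beta$ parts, and carrying the $q_i$-weighted polynomial terms of $Q_{2m-3}$ together with the exponential terms through the finite-difference summation without sign or index errors. Solvability is not in doubt: the ambient system (\ref{(14)})--(\ref{(16)}) was already shown to have a unique solution for $N+1\ge m$, and since (\ref{(491)})--(\ref{(501)}) is an equivalent reformulation, it determines the $r_i^\pm$ uniquely, after which (\ref{(49)})--(\ref{(50)}) fix $a^\pm$. Finally, substituting the now-explicit $u_m(h\beta)$ into (\ref{(28)}) and summing the $\lambda_k^{|\beta-\gamma|}$-weighted series once more — this pass generating the auxiliary quantities $M_k$ and $N_k$ — delivers the three formulas for $\mathring{C}_0(z)$, $\mathring{C}_\beta(z)$, and $\mathring{C}_N(z)$ asserted in the theorem.
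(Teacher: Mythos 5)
Your proposal is correct and follows essentially the same route as the paper: the paper obtains Corollary \ref{Cor1} simply by setting $m=2$ in Theorem \ref{THM5}, whose proof is exactly the algorithm you describe --- fixing $a^\pm$ from the branch-matching at $\beta=0$ and $\beta=N$ (equations (\ref{(49)})--(\ref{(50)})), determining $r_i^\pm$ from the homogeneous equation $D_m(h\beta)*u_m(h\beta)=0$ at $\beta=-1,\dots,-(m-1)$ and $\beta=N+1,\dots,N+m-1$ via the systems (\ref{(491)})--(\ref{(501)}), and then evaluating the convolution (\ref{(28)}). The only cosmetic difference is that you rebuild the general machinery rather than citing the already-proved general theorem and specializing.
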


\section{Numerical results}

In this section we give some numerical results.

First, when $N=5$ using Theorem \ref{THM5.1}, Corollary \ref{Cor1} and Theorem \ref{THM5} we get the graphs of the coefficients of the optimal interpolation formulas
$$
\varphi(z)\cong \mathring{P}_{\varphi}(z)=\sum\limits_{\beta=0}^5\mathring{C}_{\beta}(z)\varphi(h\beta),\ z\in [0,1]
$$
for the cases $m=1$, $m=2$ and $m=3$, respectively. They are presented in Figures \ref{Fig1}, \ref{Fig4} and \ref{Fig7}, respectively.
These graphical results confirm Remark \ref{Rem2} for the cases $N=5$ and $m=1,2,3$, i.e. for the optimal coefficients the following hold
$$
\mathring{C}_{\beta}(h\gamma)=\delta_{\beta\gamma},\ \ \beta,\gamma=0,1,...,5,
$$
where $\delta_{\beta\gamma}$ is the Kronecker symbol.

Now, in numerical examples, we interpolate the functions
$$
\varphi_1(z)=z^2,\ \varphi_2(z)=\E^{2z}\mbox{ and } \varphi_3(z)=\sin z
$$
by optimal interpolation formulas of the form (\ref{(1)}) in the cases $m=1,\ 2,\ 3$ and $N=5,\ 10$, using Theorem \ref{THM5.1}, Corollary \ref{Cor1} and Theorem \ref{THM5}. For the functions
$\varphi_i$, $i=1,2,3$ the graphs of absolute errors $|\varphi_i(z)-\mathring{P}_{\varphi_i}(z)|$, $i=1,2,3$, are given in Figures \ref{Fig2}, \ref{Fig3}, \ref{Fig5}, \ref{Fig6}, \ref{Fig8} and \ref{Fig9}. In these Figures one can see that by increasing values of $m$ and $N$ absolute errors
between optimal interpolation formulas and given functions are decreasing.

\section*{Acknowledgements} We are very thankful to professor Dario Andrea Bini for discussion of the results
of this paper. S.S. Babaev thanks professor Dario Andrea Bini and his research group for hospitality.
The part of this work was done at the Pisa University, Italy. The first
author thanks the ERASMUS+ KA107 International Credit Mobility for scholarship.

\newpage

\begin{figure}
  \includegraphics[width=0.3\textwidth]{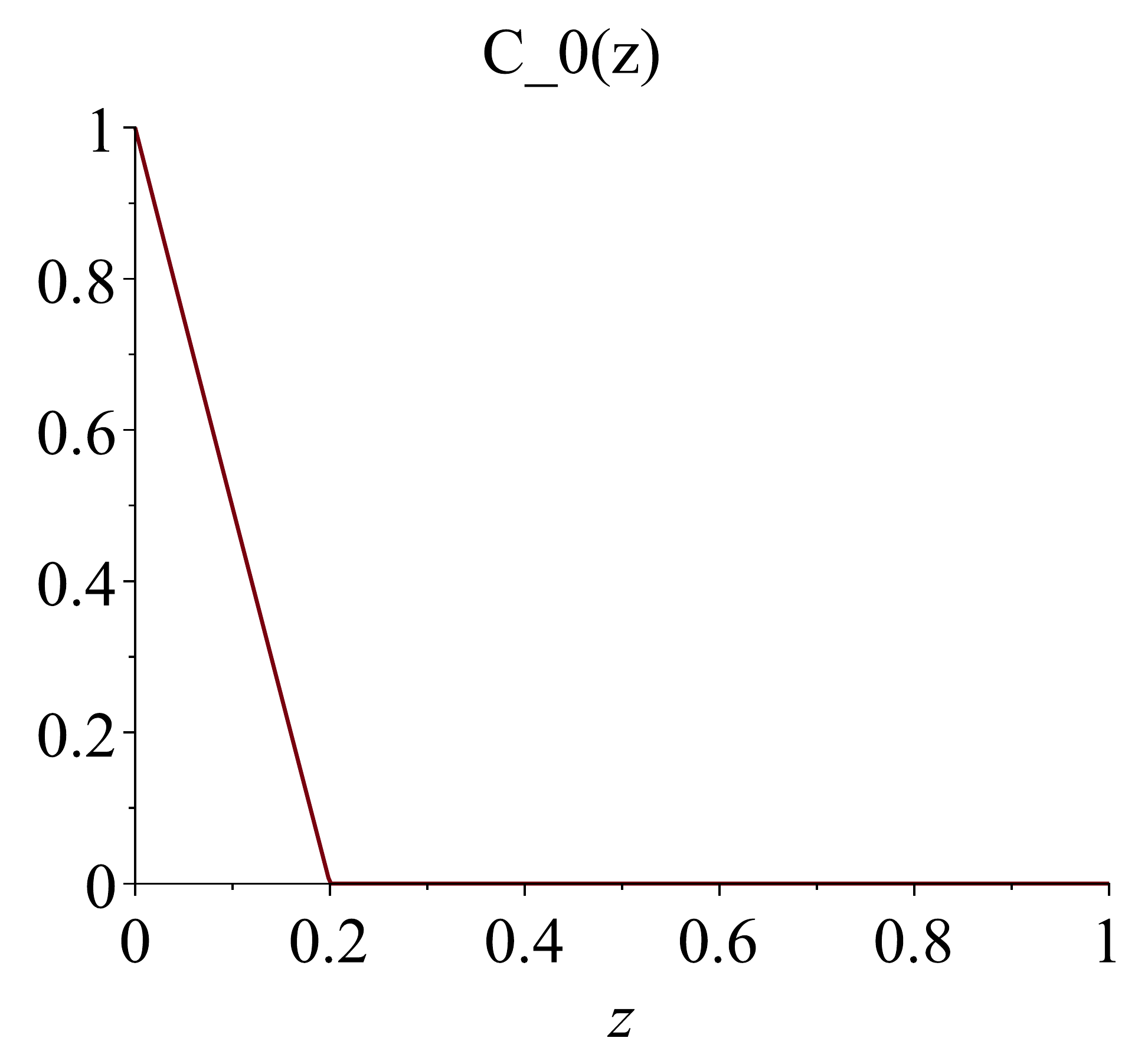}
  \includegraphics[width=0.3\textwidth]{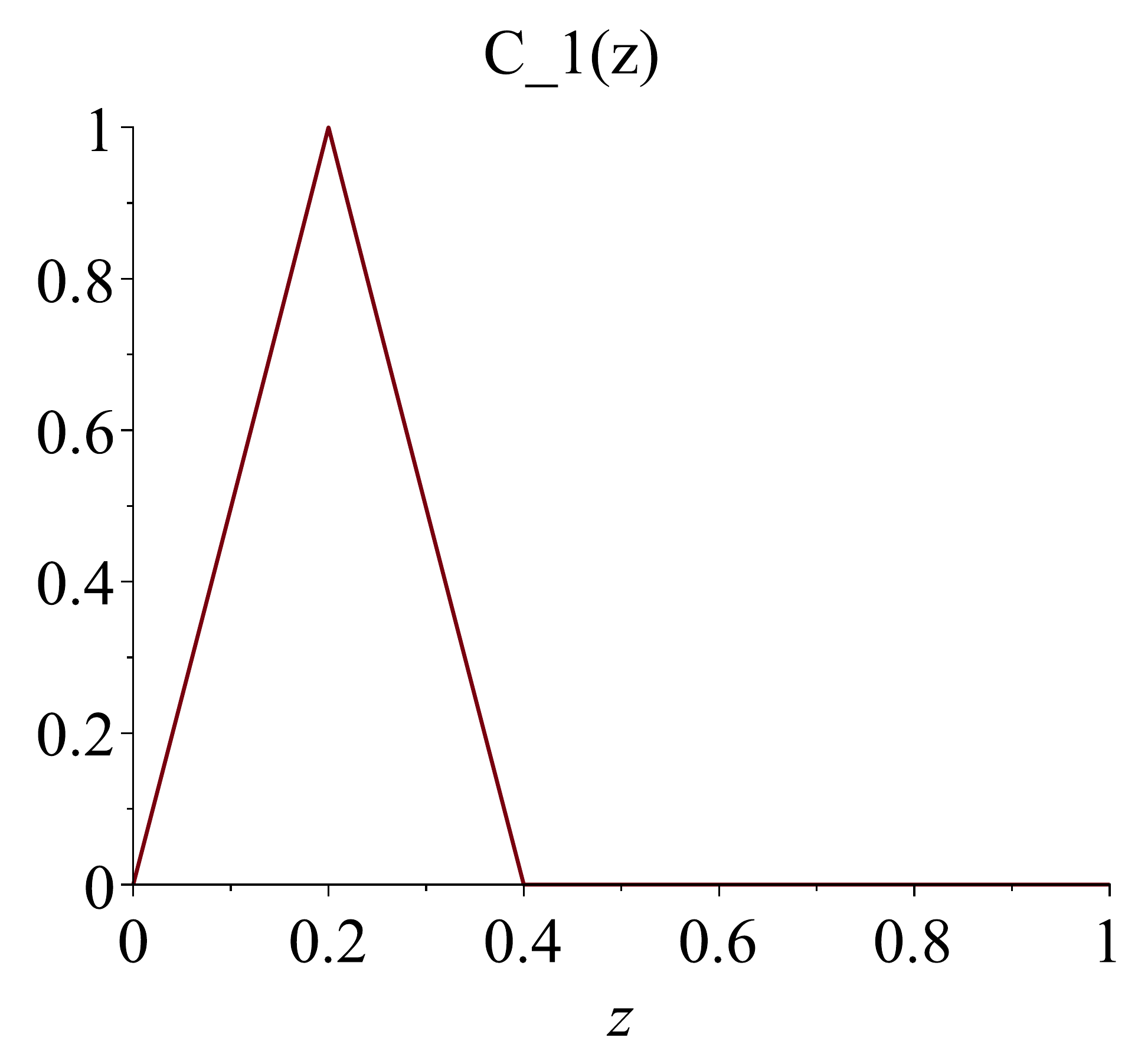}
  \includegraphics[width=0.3\textwidth]{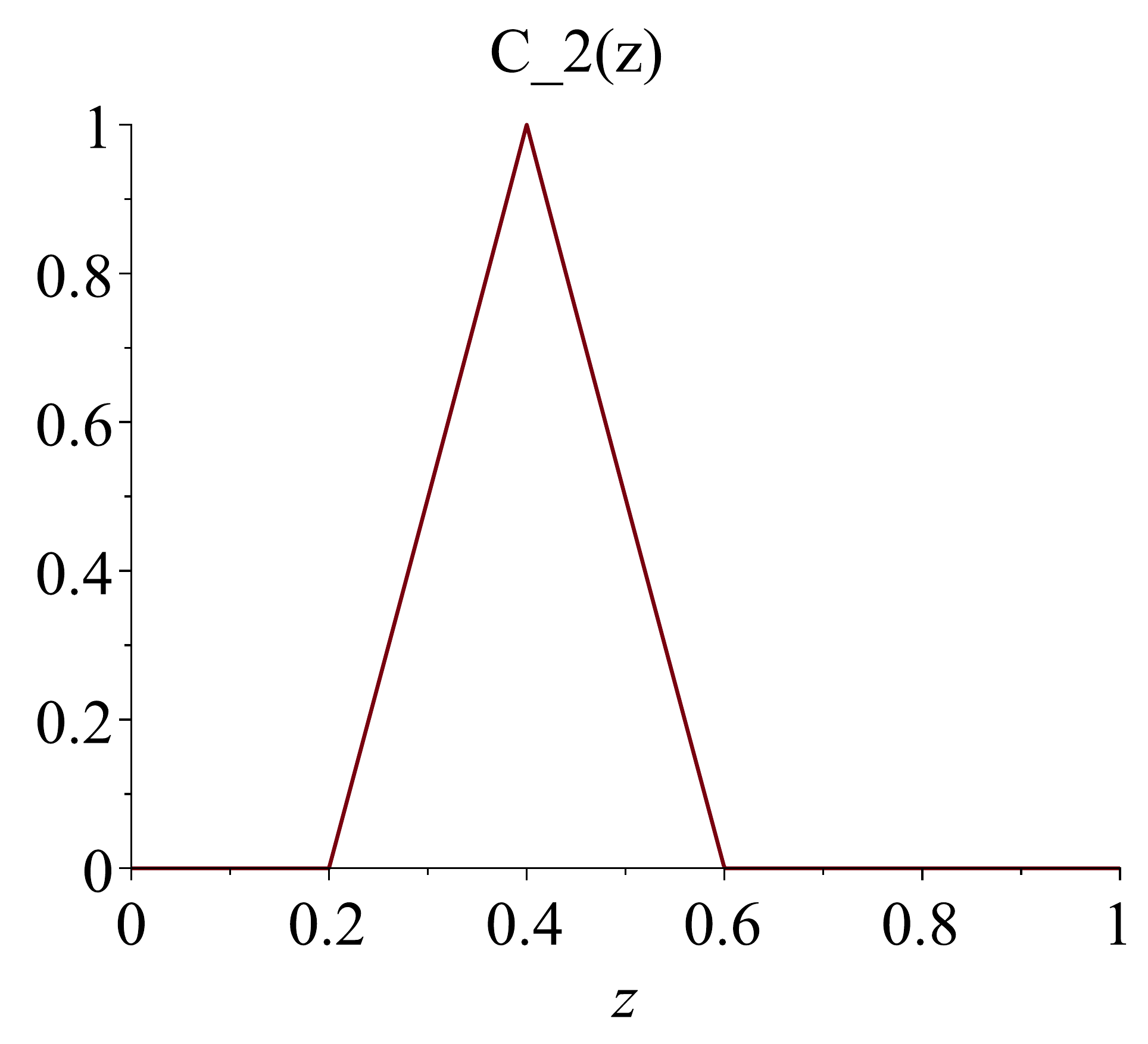}\\
  \includegraphics[width=0.3\textwidth]{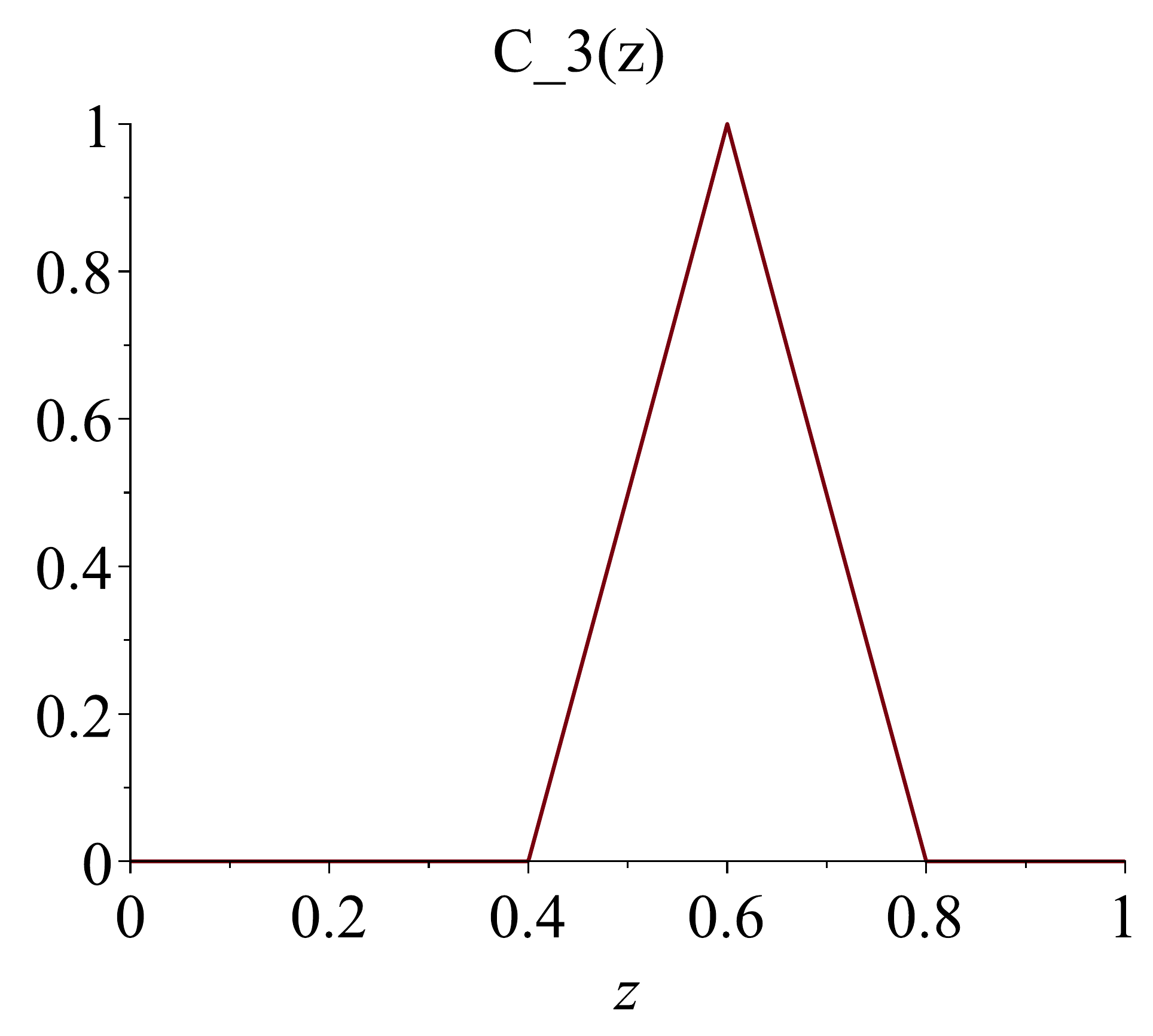}
  \includegraphics[width=0.3\textwidth]{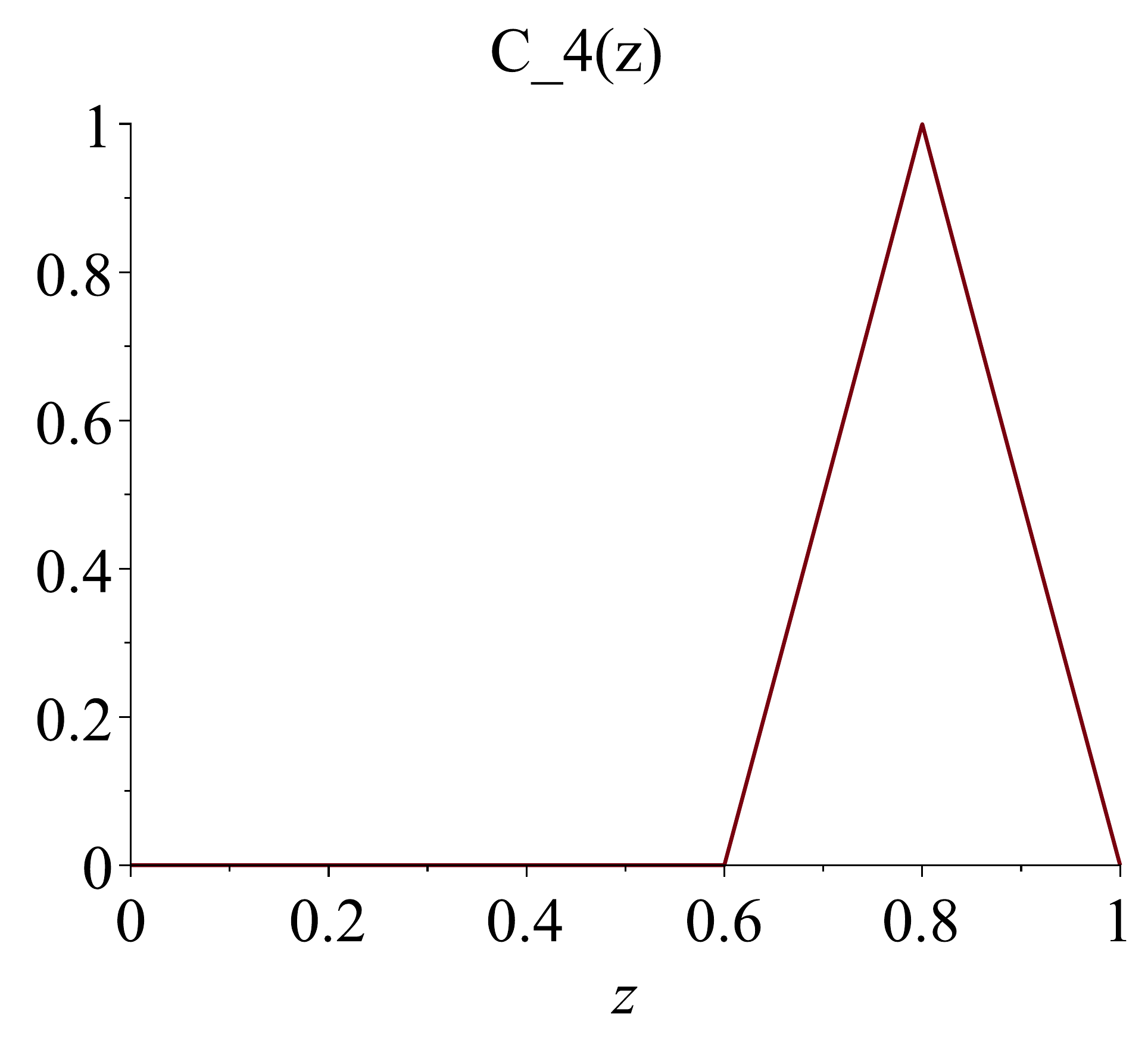}
  \includegraphics[width=0.3\textwidth]{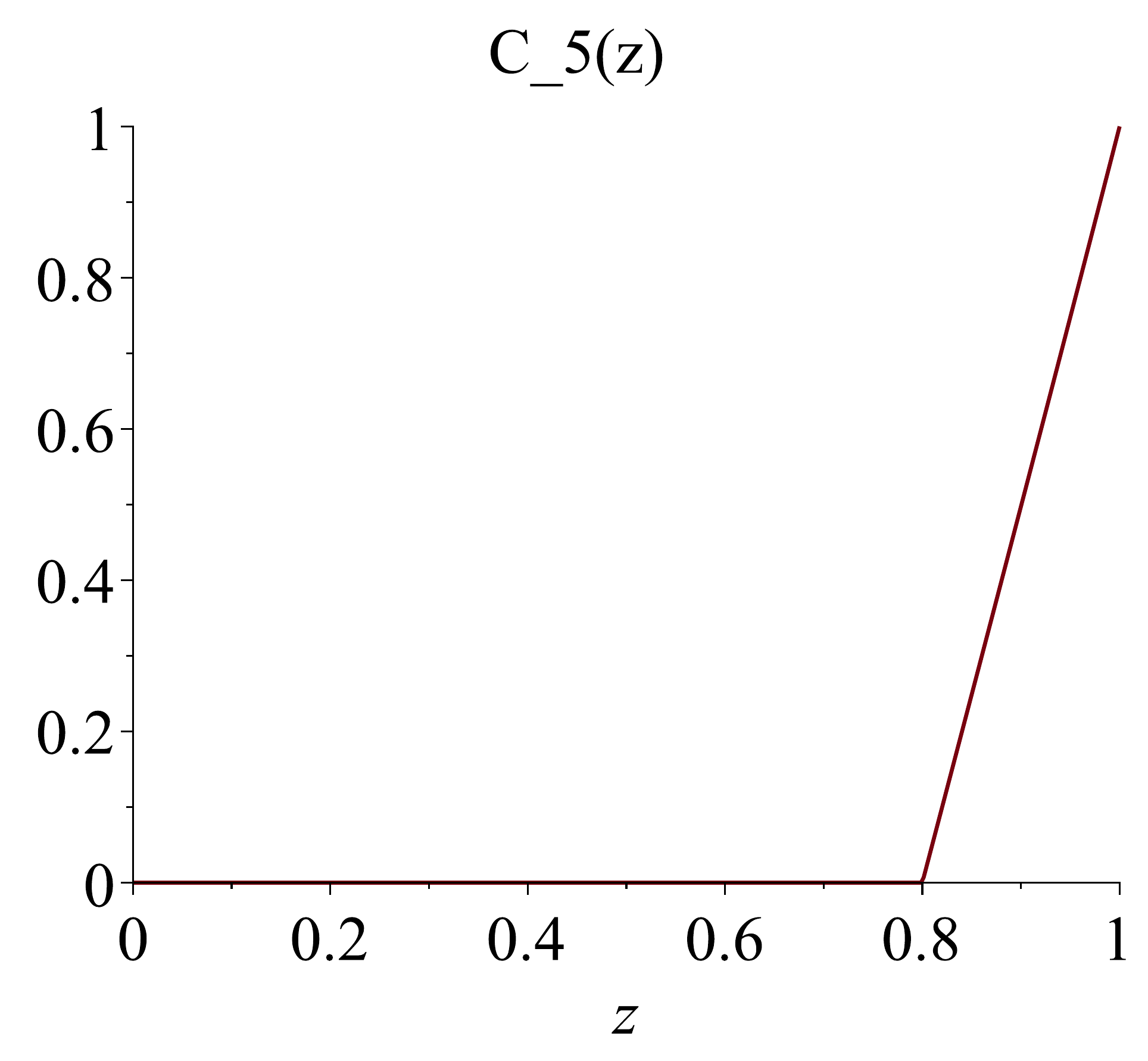}
  \caption{Graphs of coefficients of the optimal interpolation formulas (\ref{(1)}) in the case $m=1$ and $N=5$.}\label{Fig1}
\end{figure}

\begin{figure}
  \includegraphics[width=0.3\textwidth]{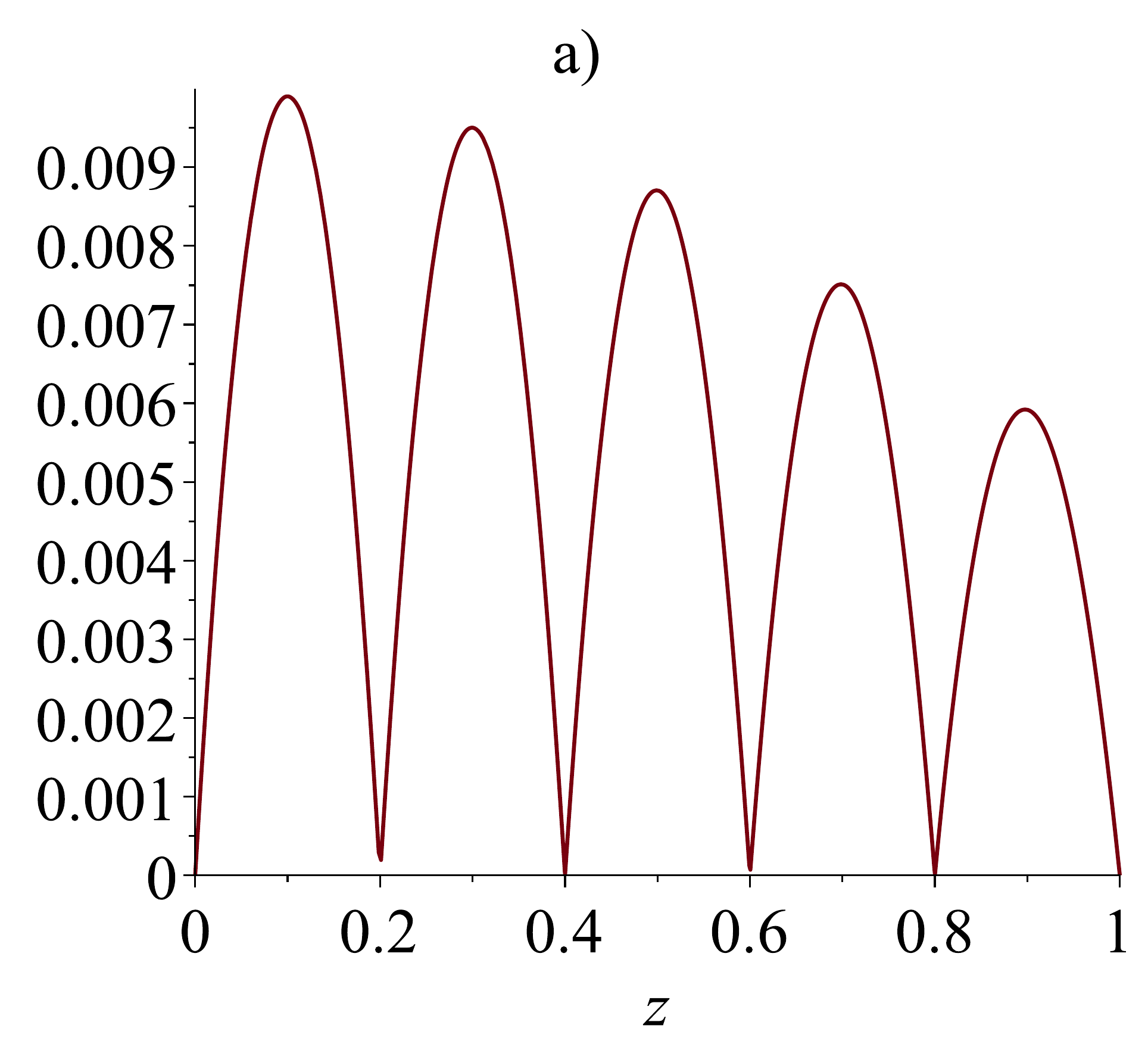}
  \includegraphics[width=0.3\textwidth]{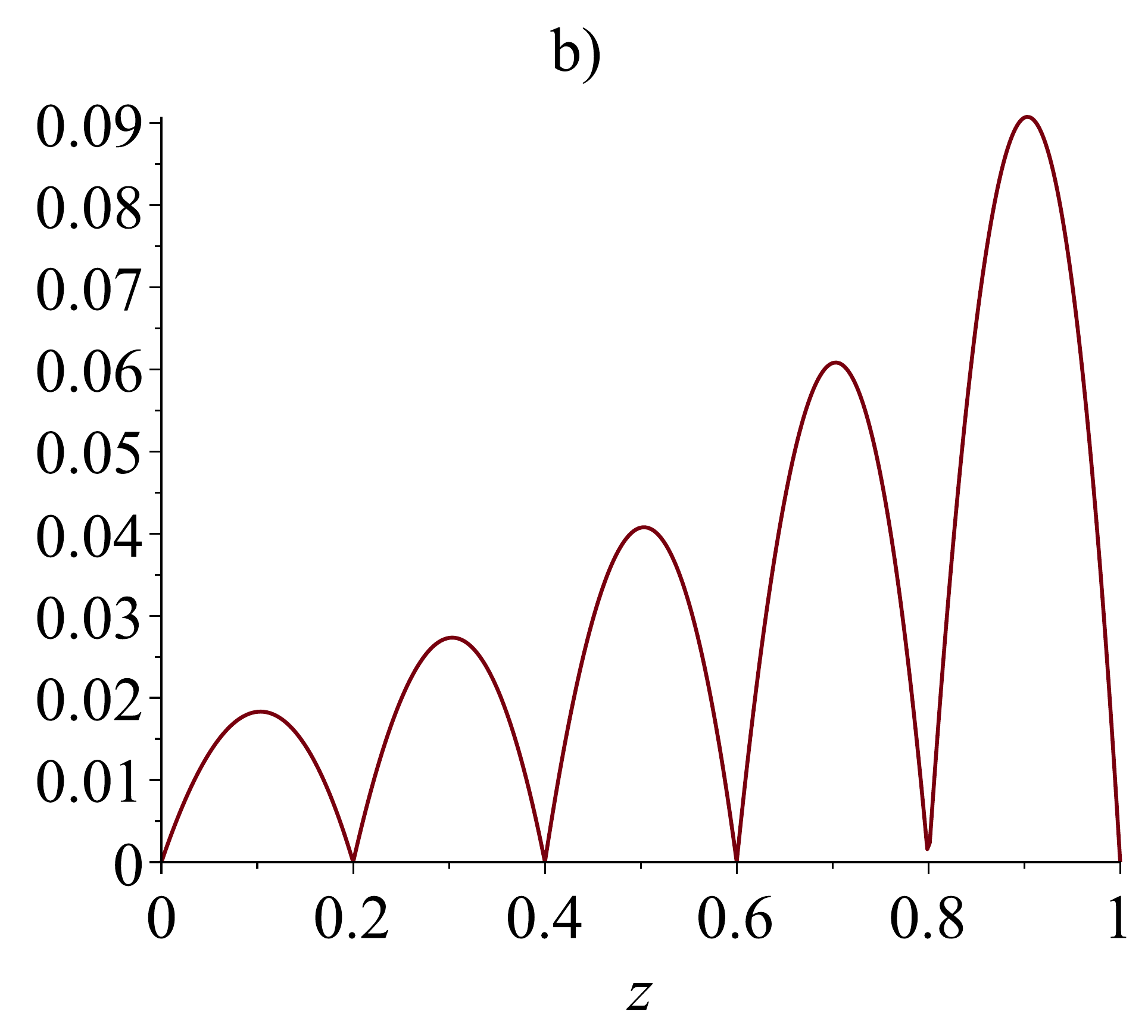}
  \includegraphics[width=0.3\textwidth]{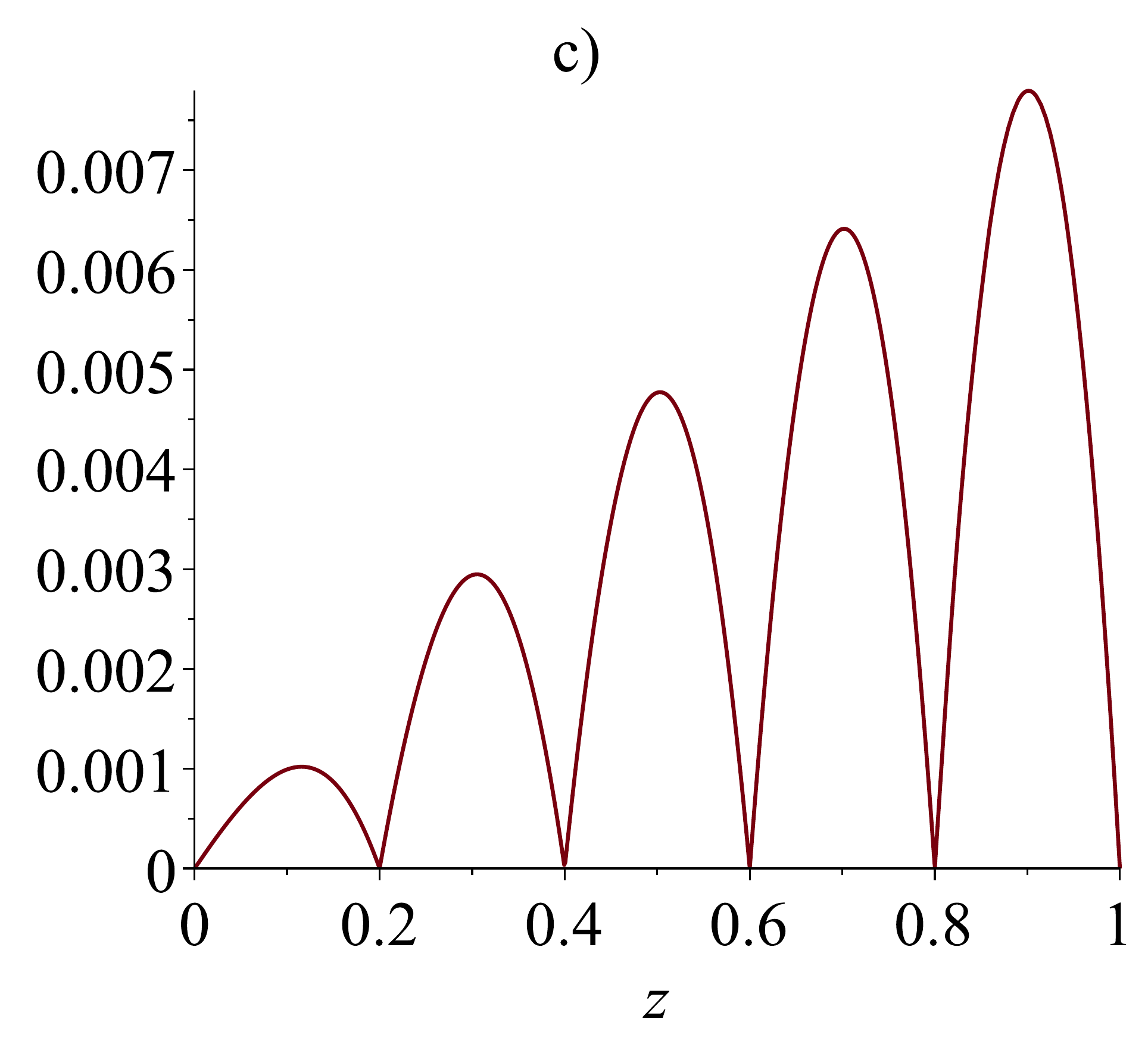}
  \caption{Graphs of absolute errors for $m=1$ and $N=5$: (\textbf{a}) $|\varphi_1(z)-P_{\varphi_1}(z)|$, (\textbf{b}) $|\varphi_2(z)-P_{\varphi_2}(z)|$, (\textbf{c}) $|\varphi_3(z)-P_{\varphi_3}(z)|$.}\label{Fig2}
\end{figure}

\begin{figure}
  \includegraphics[width=0.3\textwidth]{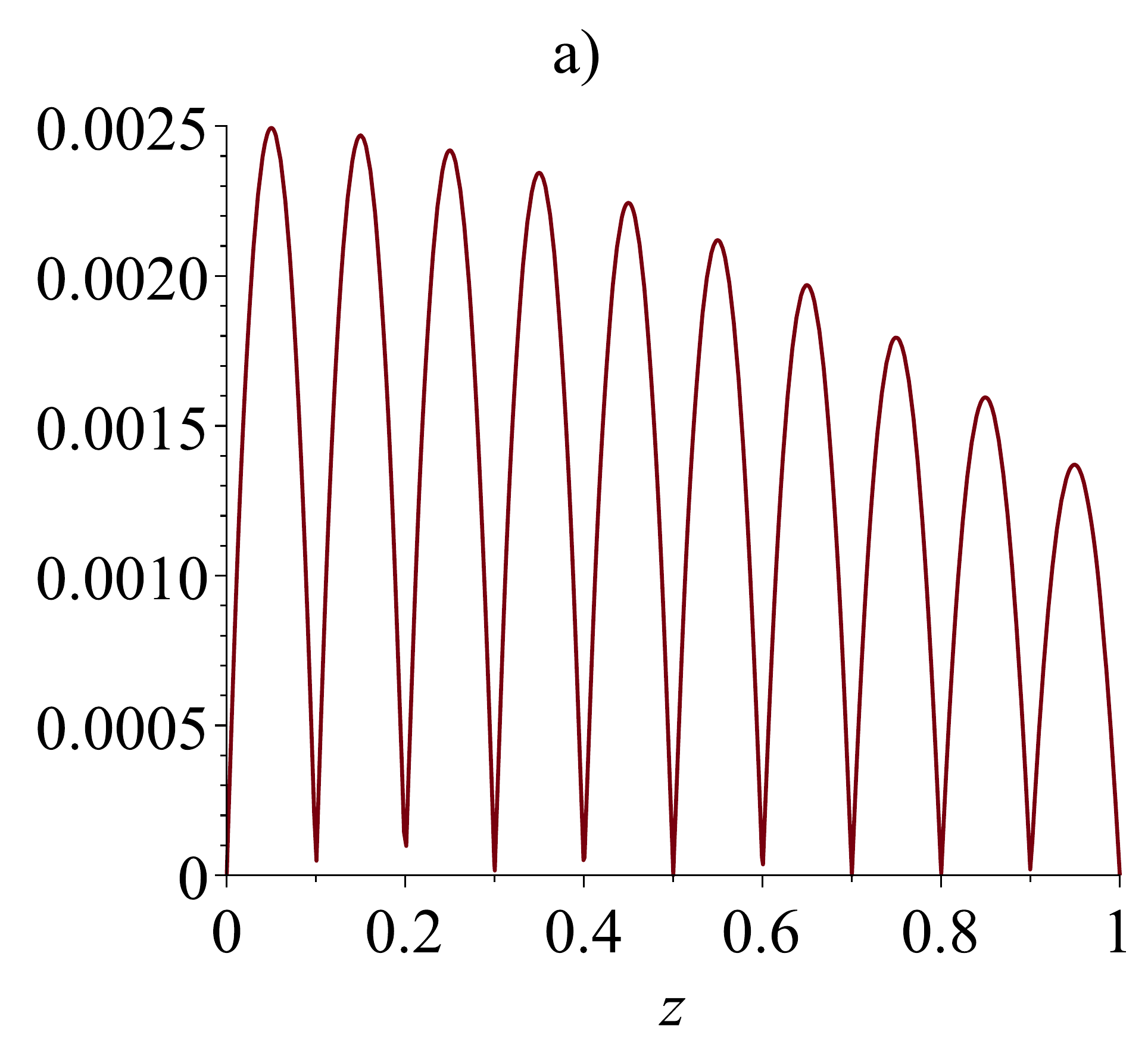}
  \includegraphics[width=0.3\textwidth]{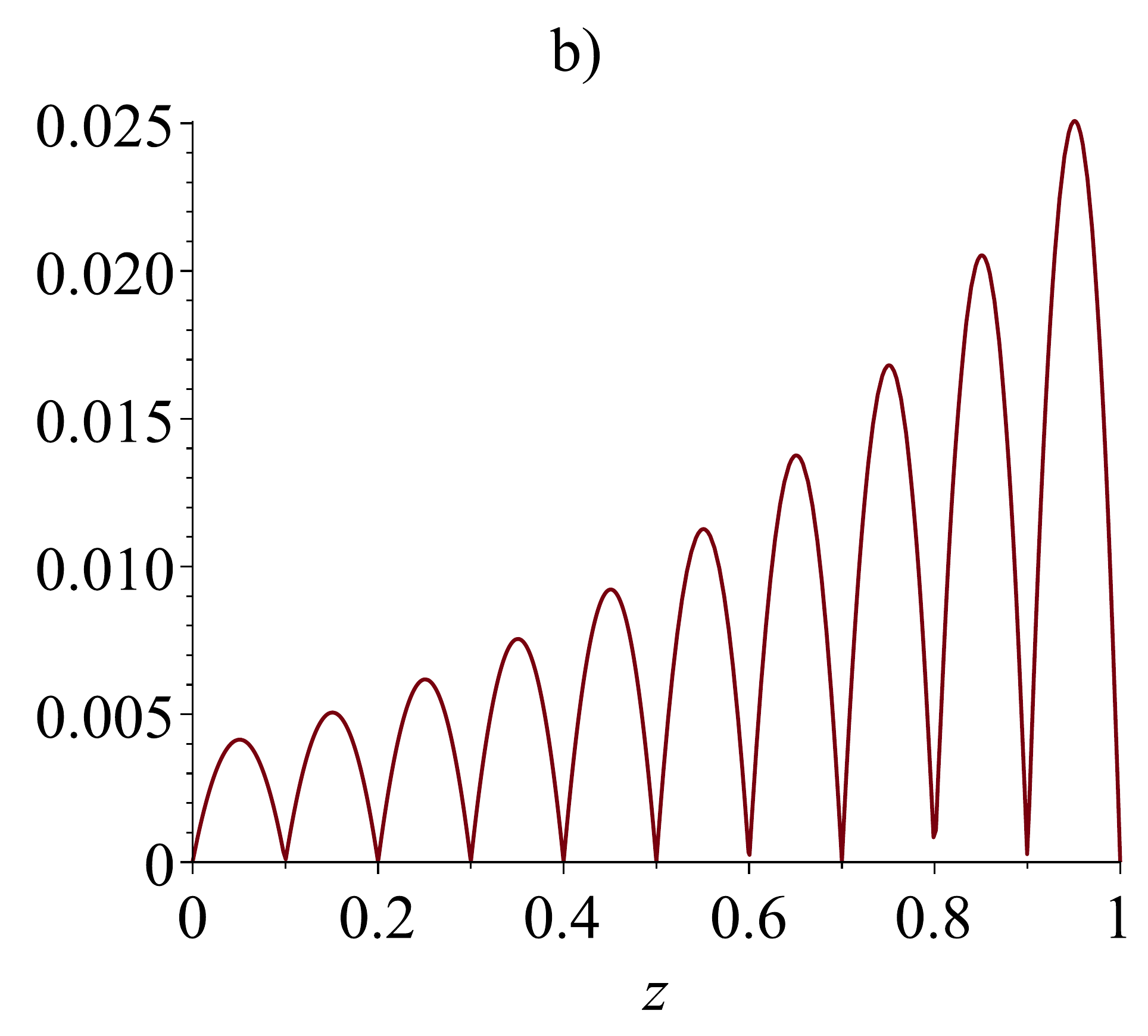}
  \includegraphics[width=0.3\textwidth]{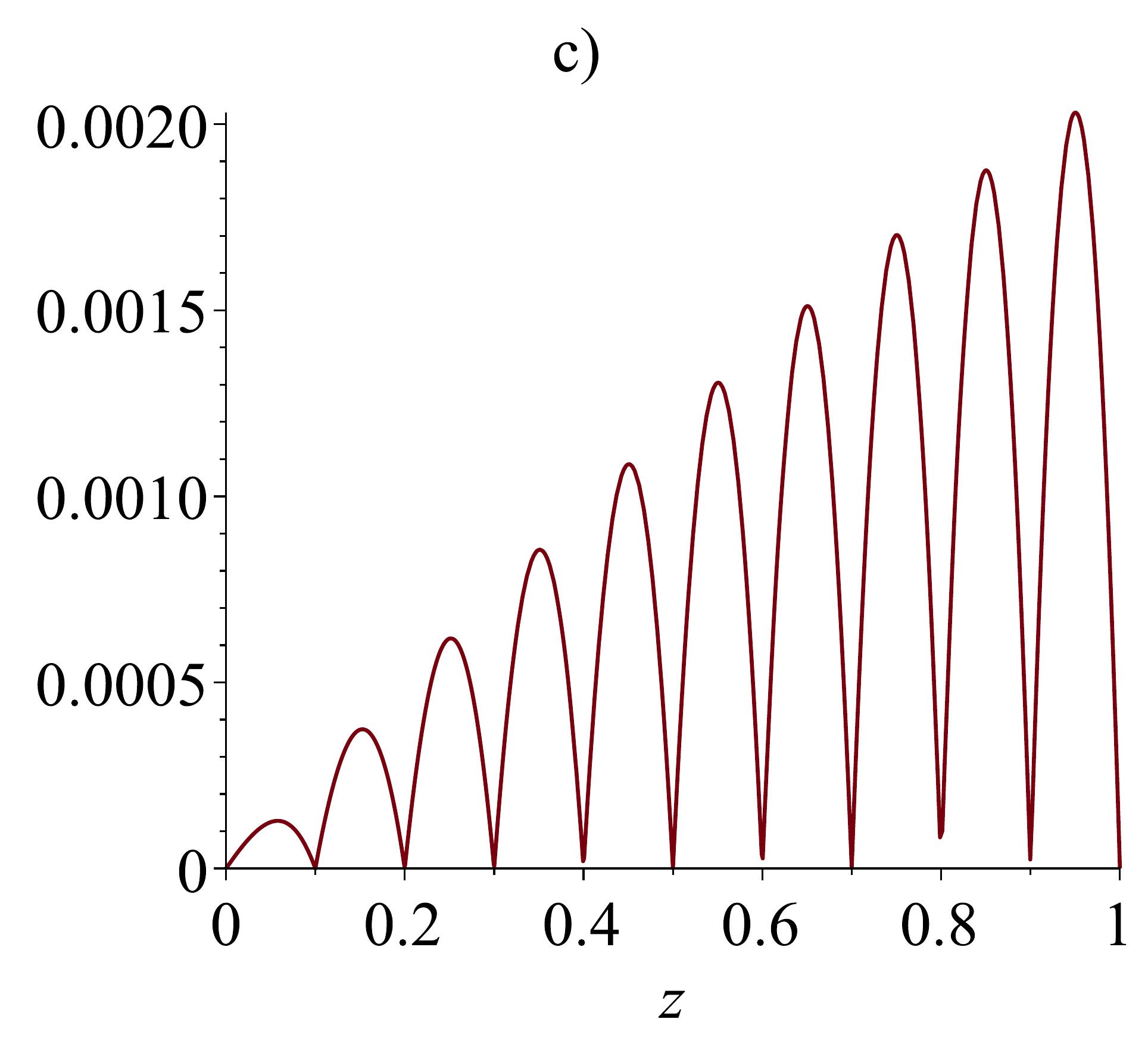}
  \caption{Graphs of absolute errors for $m=1$ and $N=10$: (\textbf{a}) $|\varphi_1(z)-P_{\varphi_1}(z)|$, (\textbf{b}) $|\varphi_2(z)-P_{\varphi_2}(z)|$, (\textbf{c}) $|\varphi_3(z)-P_{\varphi_3}(z)|$.}\label{Fig3}
\end{figure}

\begin{figure}
  \includegraphics[width=0.3\textwidth]{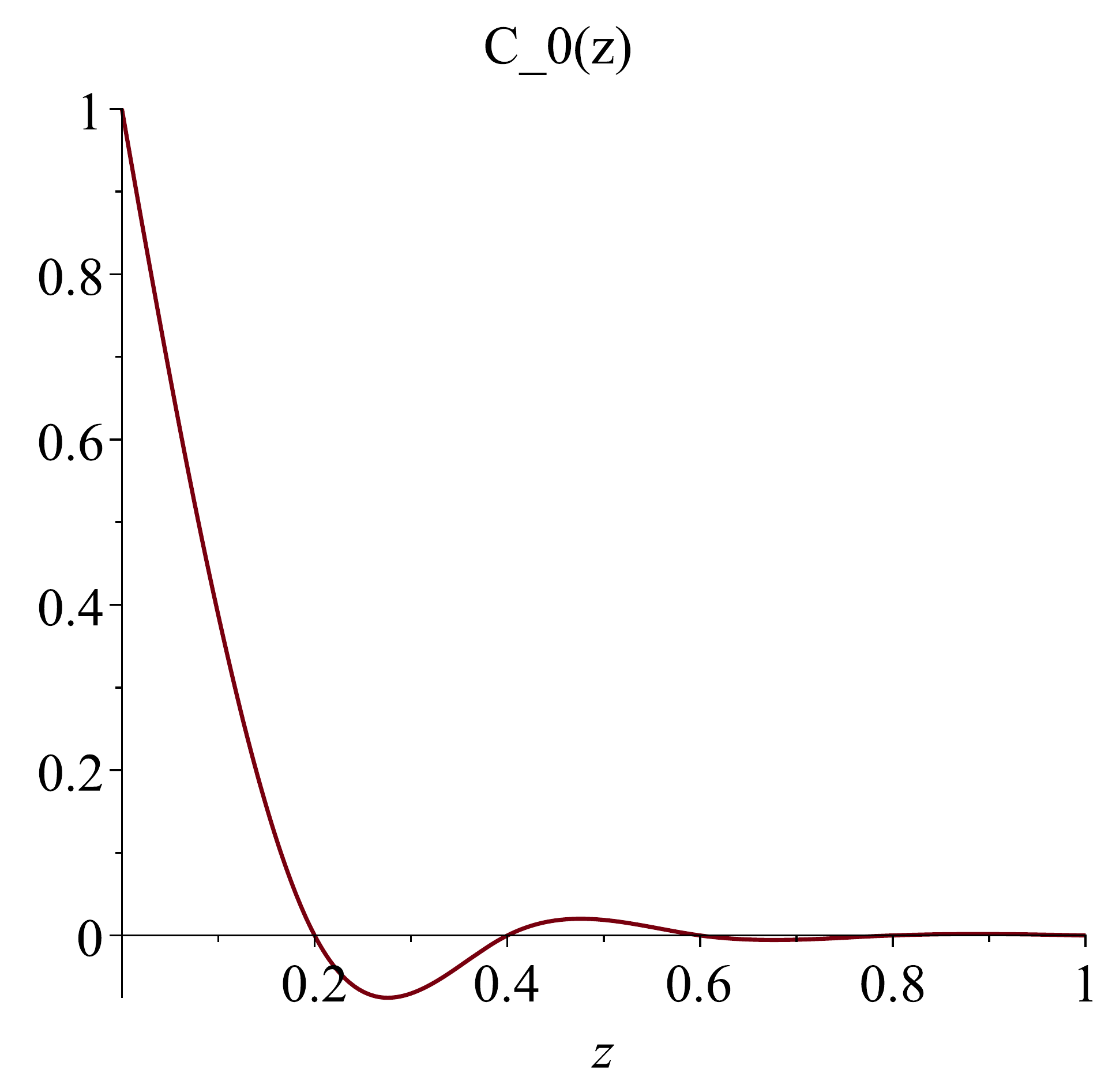}
  \includegraphics[width=0.3\textwidth]{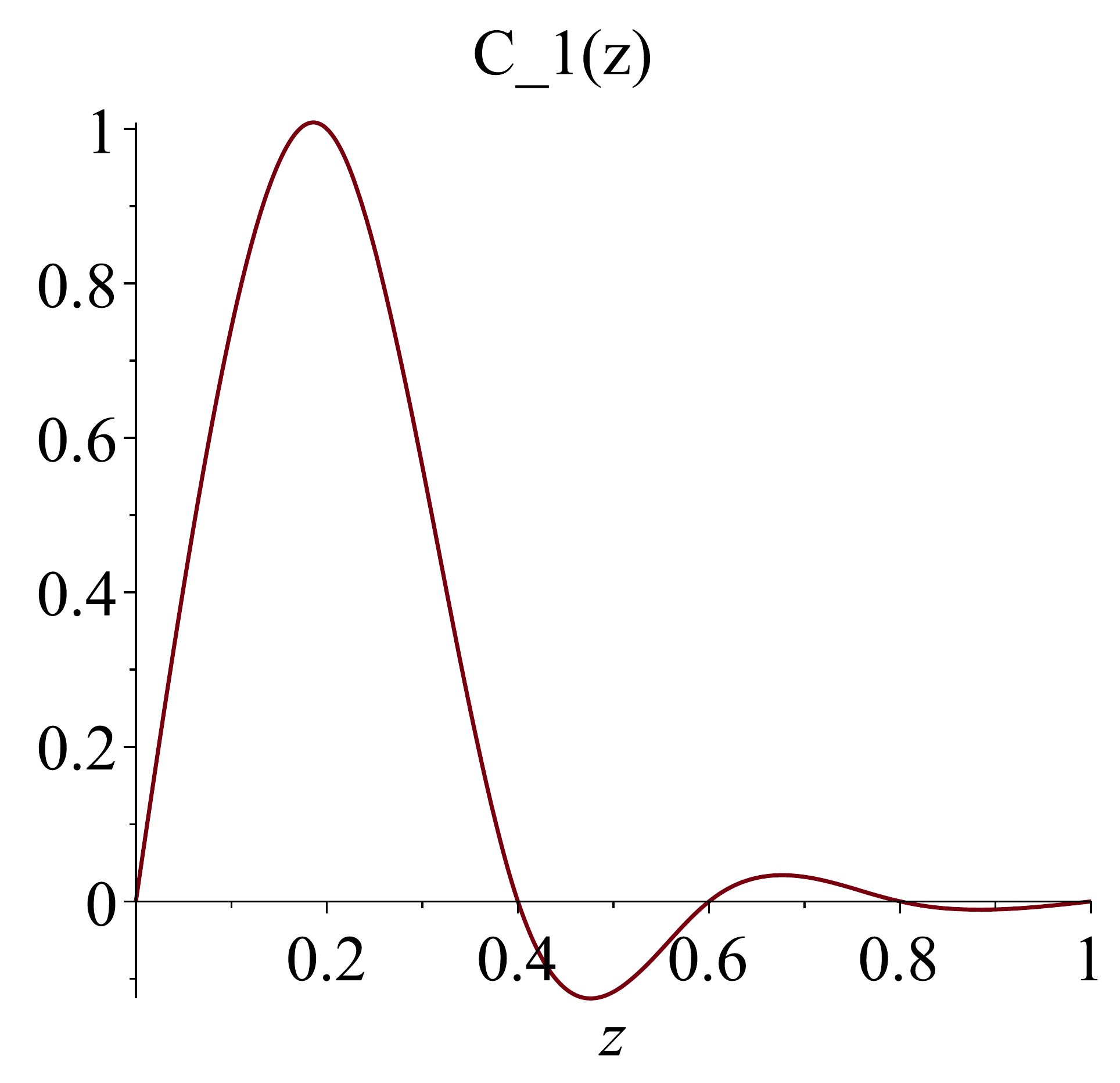}
  \includegraphics[width=0.3\textwidth]{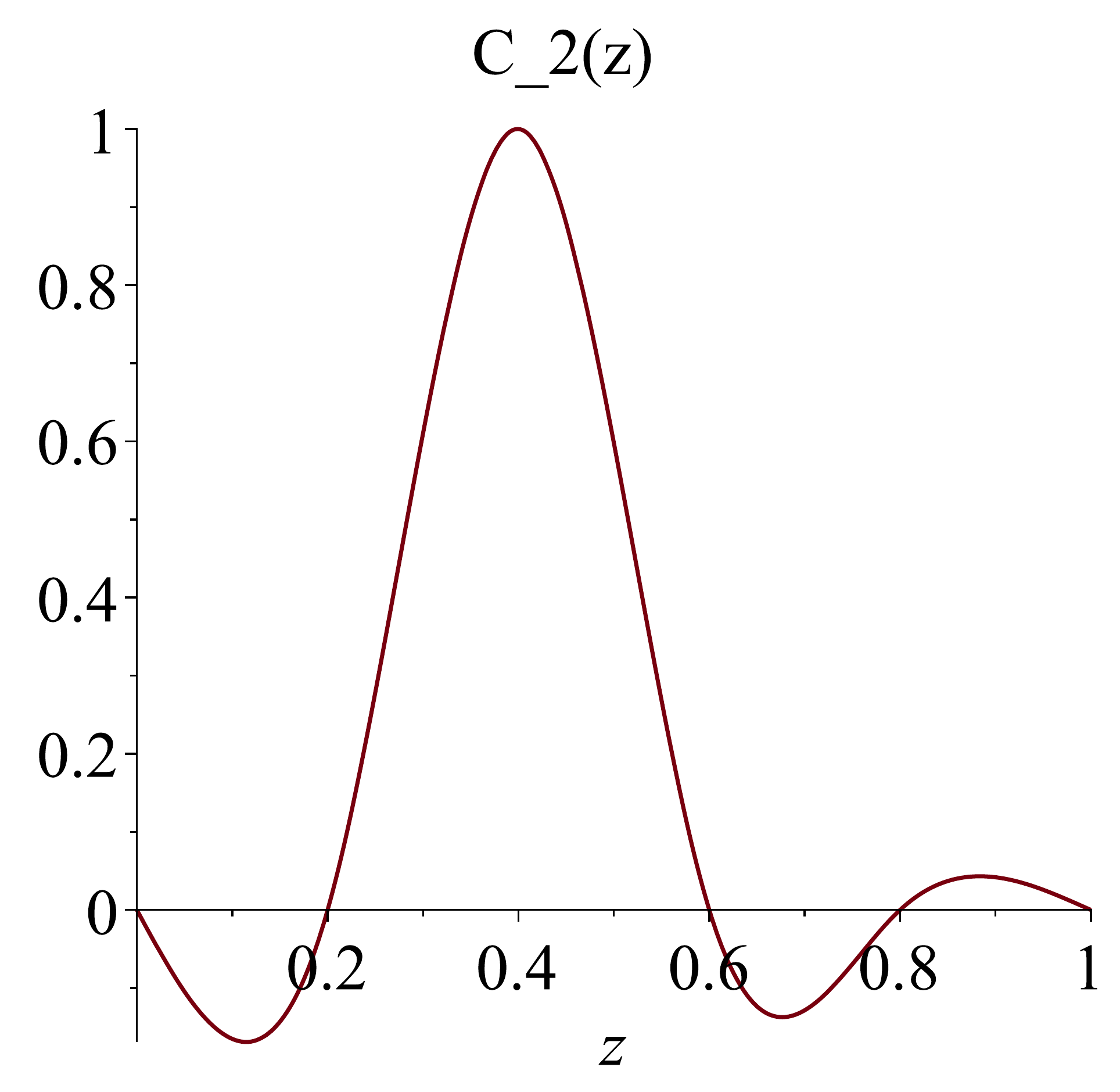}\\
  \includegraphics[width=0.3\textwidth]{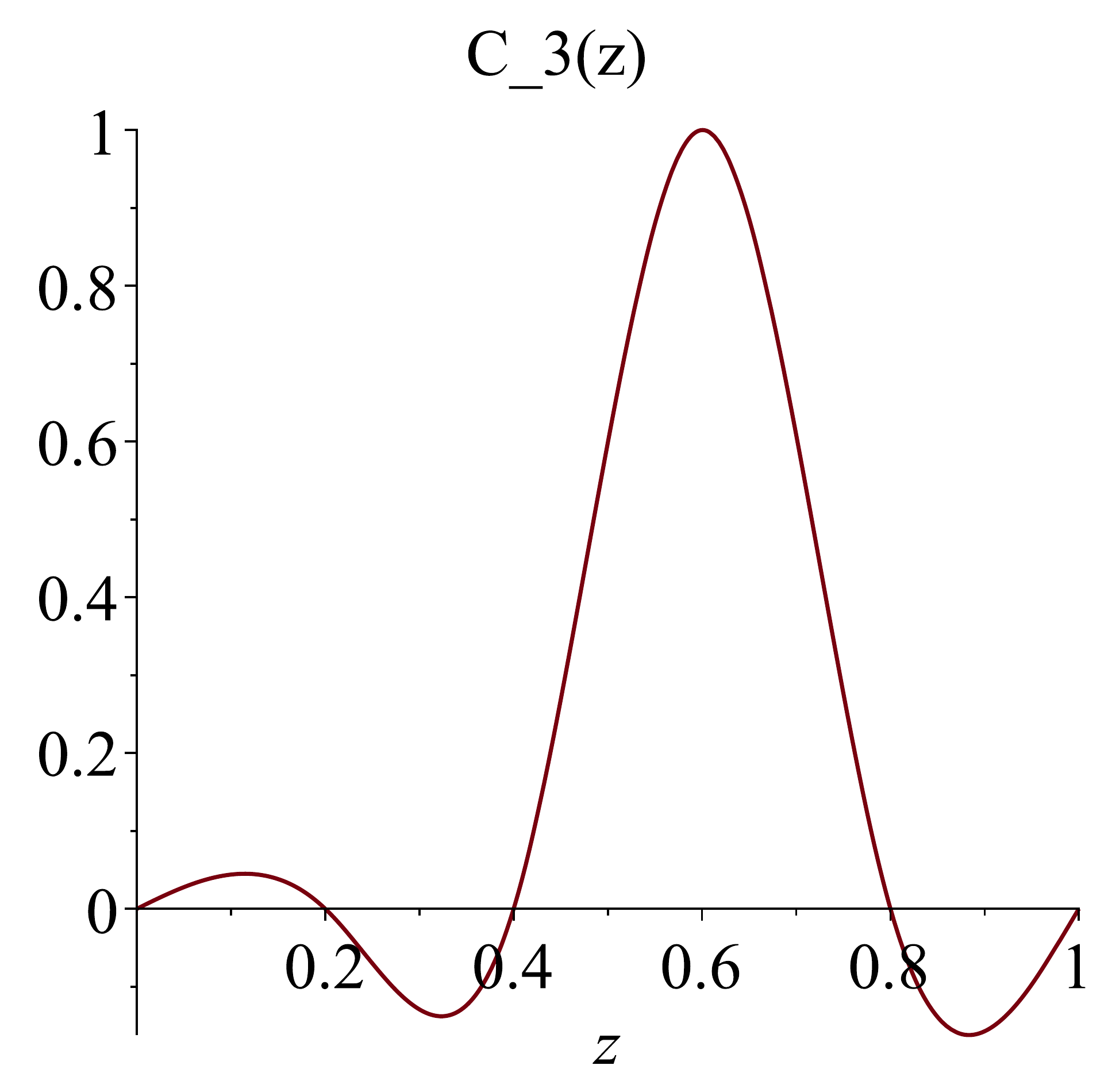}
  \includegraphics[width=0.3\textwidth]{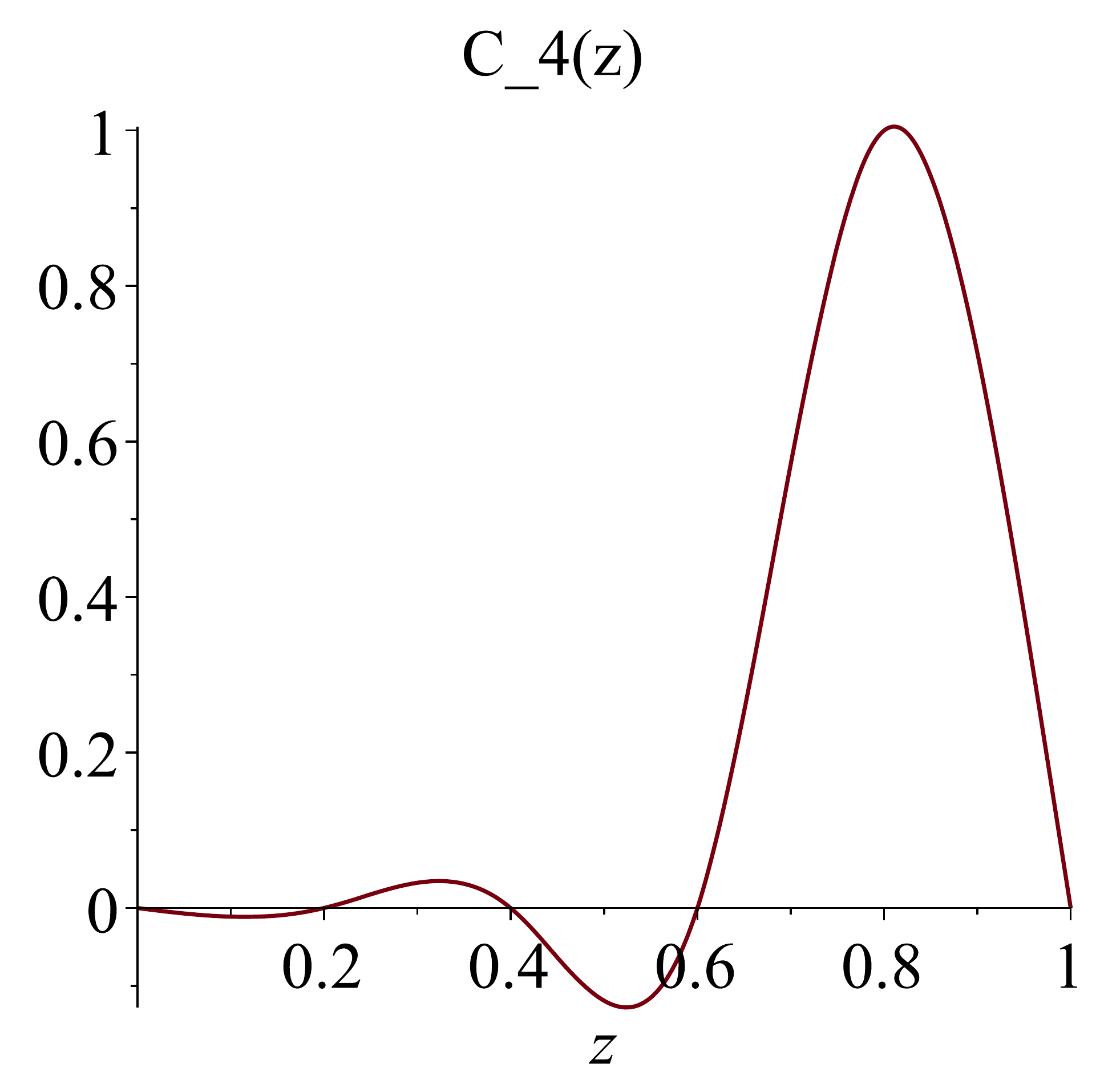}
  \includegraphics[width=0.3\textwidth]{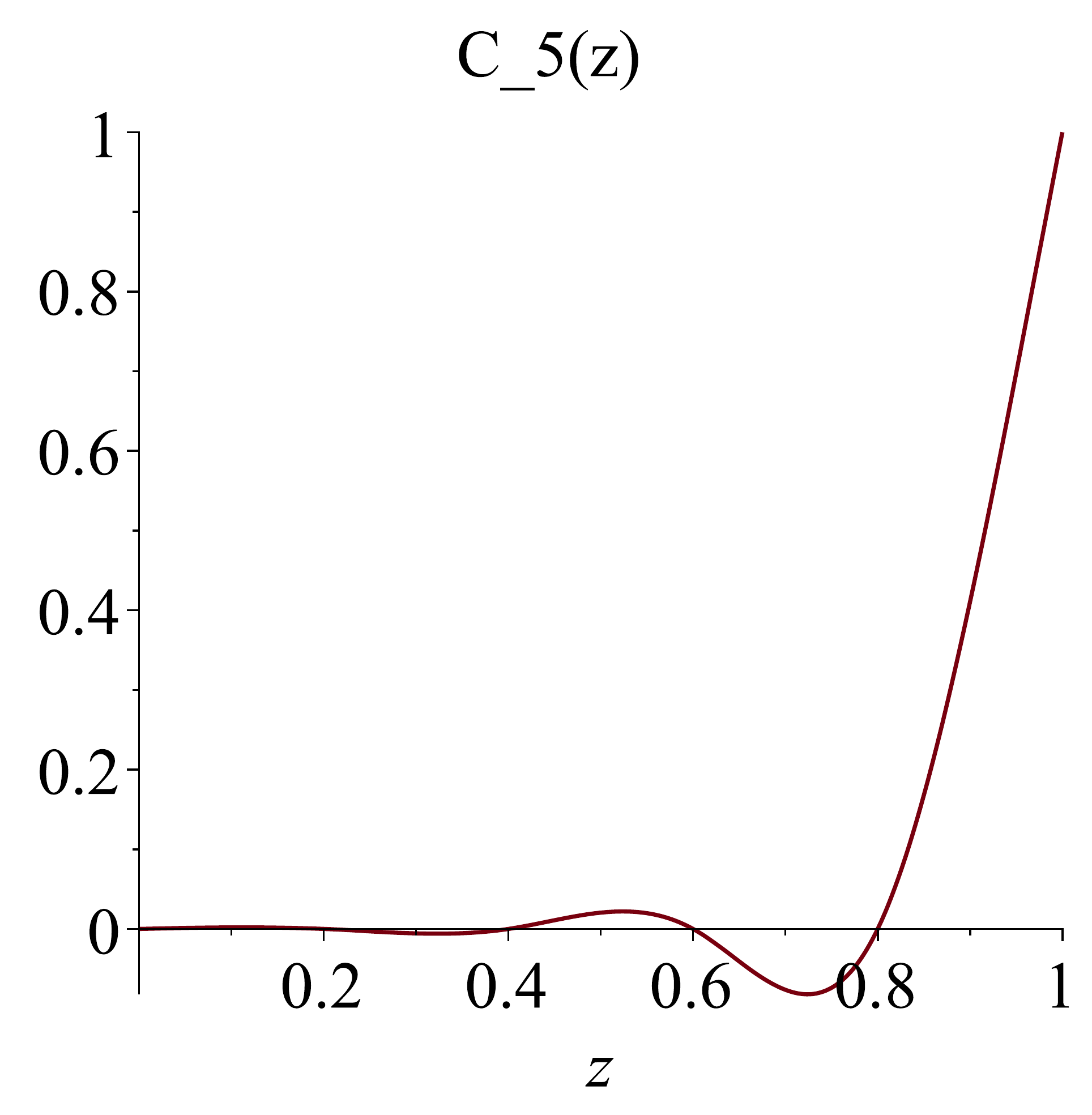}
  \caption{Graphs of coefficients of the optimal interpolation formulas (\ref{(1)}) in the case $m=2$ and $N=5$.}\label{Fig4}
\end{figure}

\begin{figure}
  \includegraphics[width=0.3\textwidth]{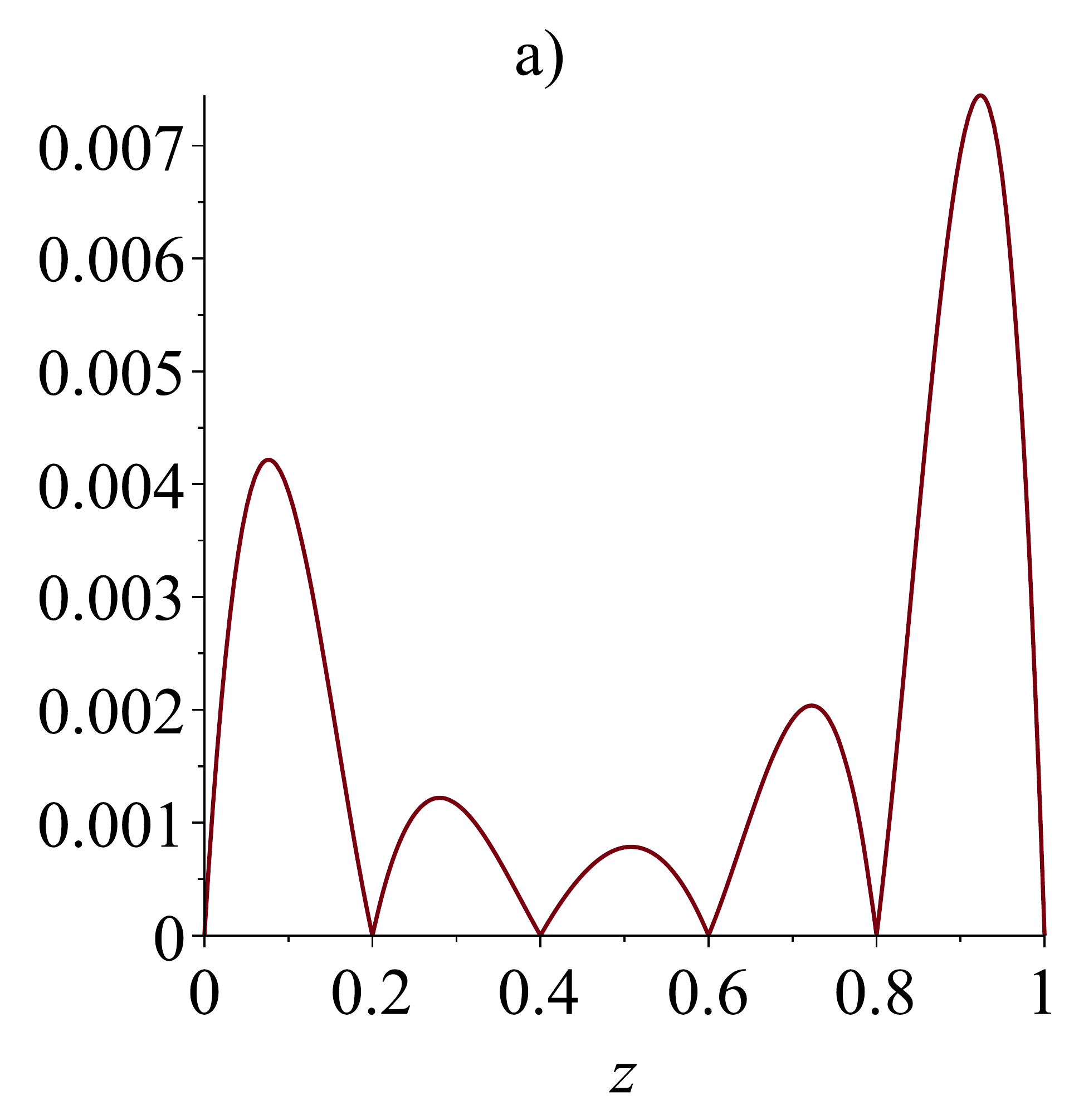}
  \includegraphics[width=0.3\textwidth]{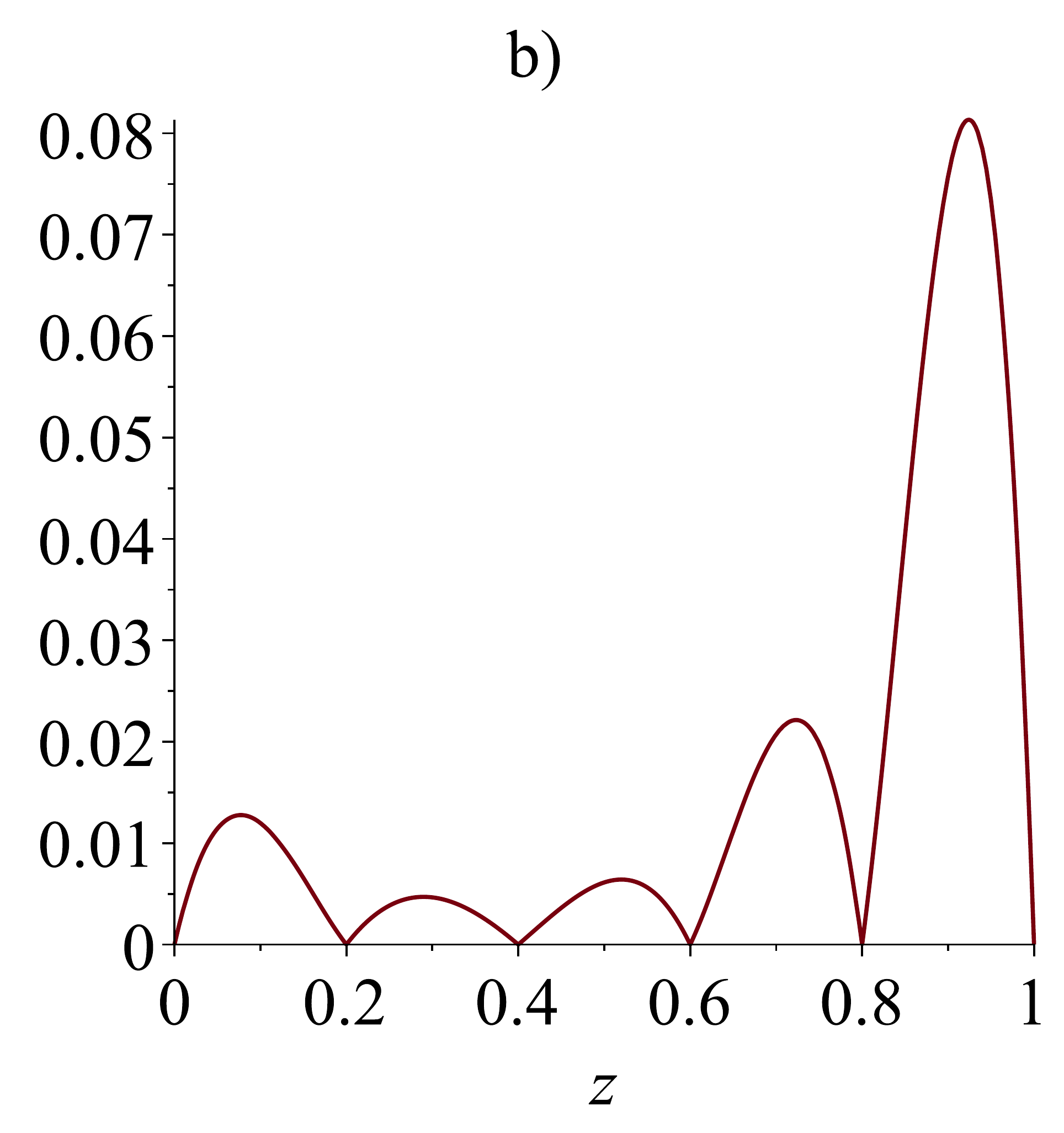}
  \includegraphics[width=0.3\textwidth]{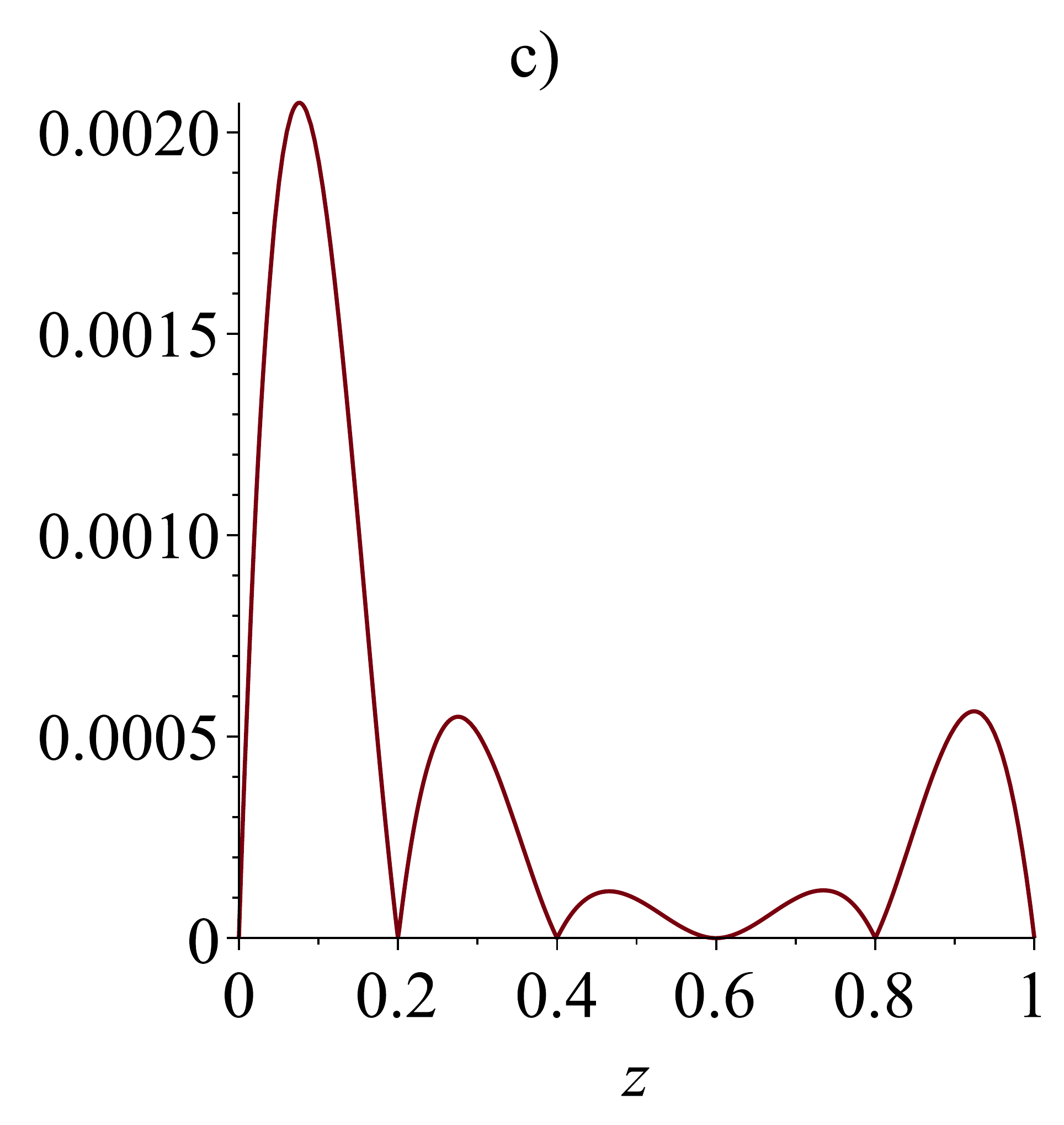}
  \caption{Graphs of absolute errors for $m=2$ and $N=5$: (\textbf{a}) $|\varphi_1(z)-P_{\varphi_1}(z)|$, (\textbf{b}) $|\varphi_2(z)-P_{\varphi_2}(z)|$, (\textbf{c}) $|\varphi_3(z)-P_{\varphi_3}(z)|$.}\label{Fig5}
\end{figure}

\begin{figure}
  \includegraphics[width=0.3\textwidth]{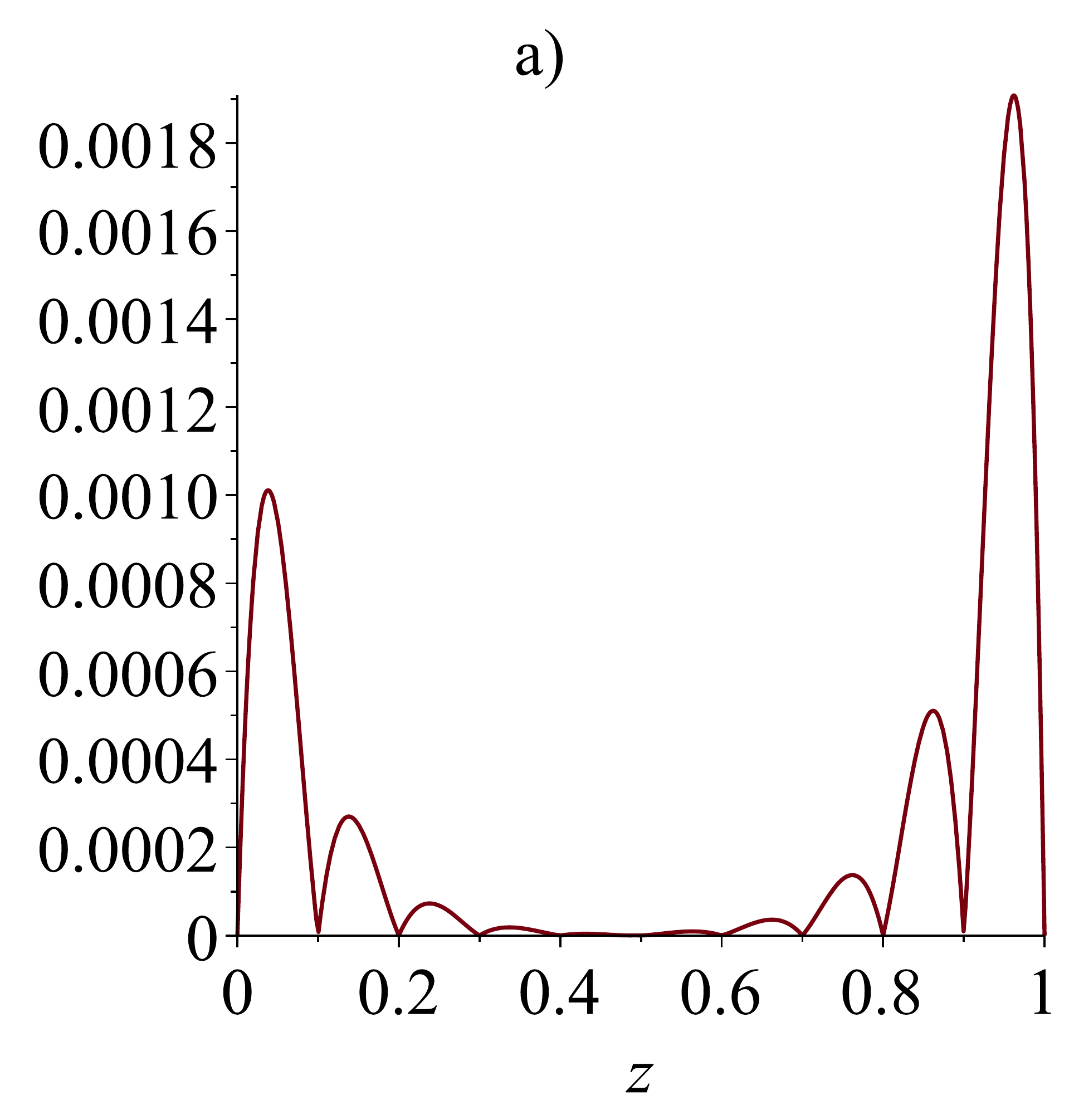}
  \includegraphics[width=0.3\textwidth]{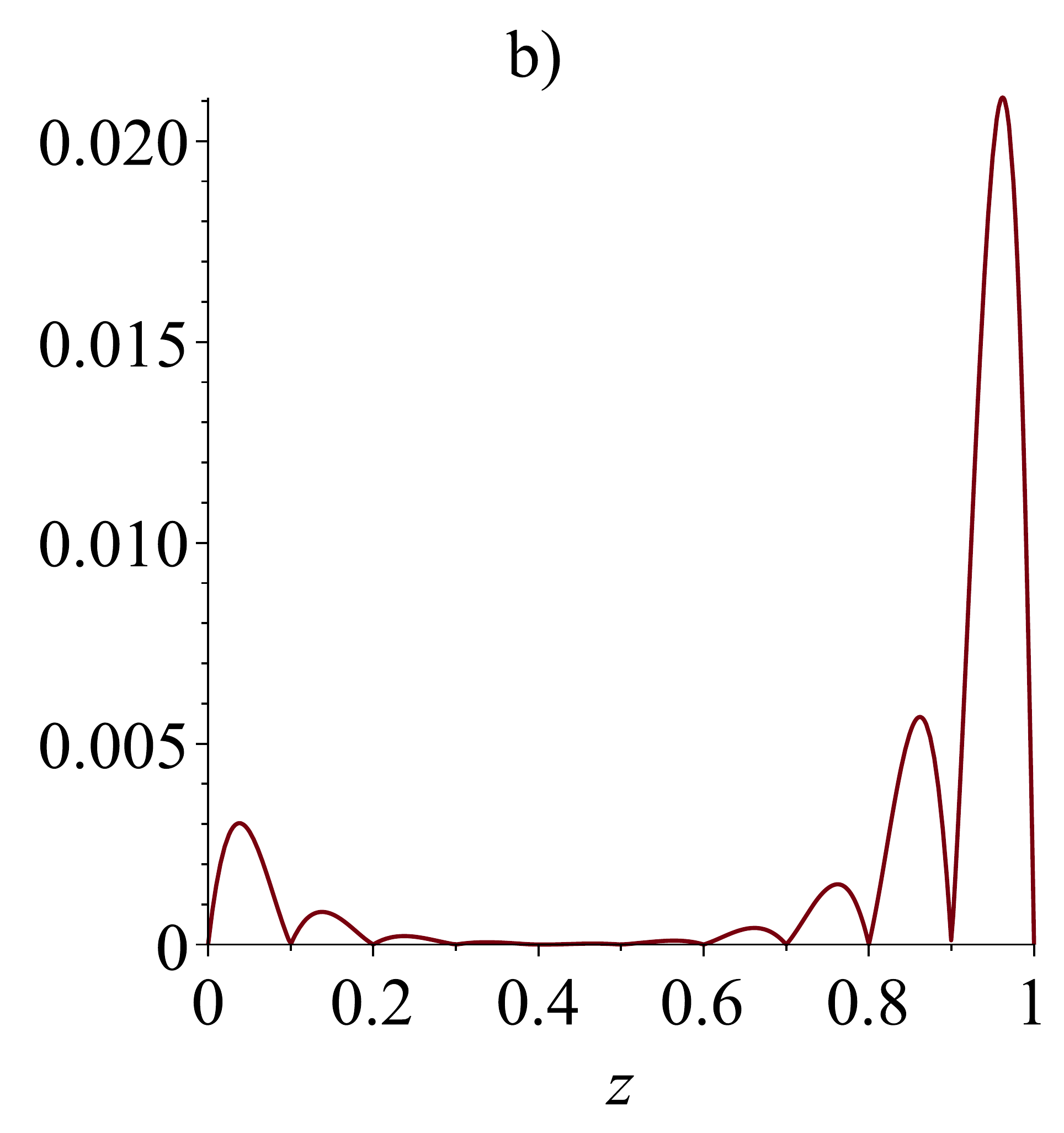}
  \includegraphics[width=0.3\textwidth]{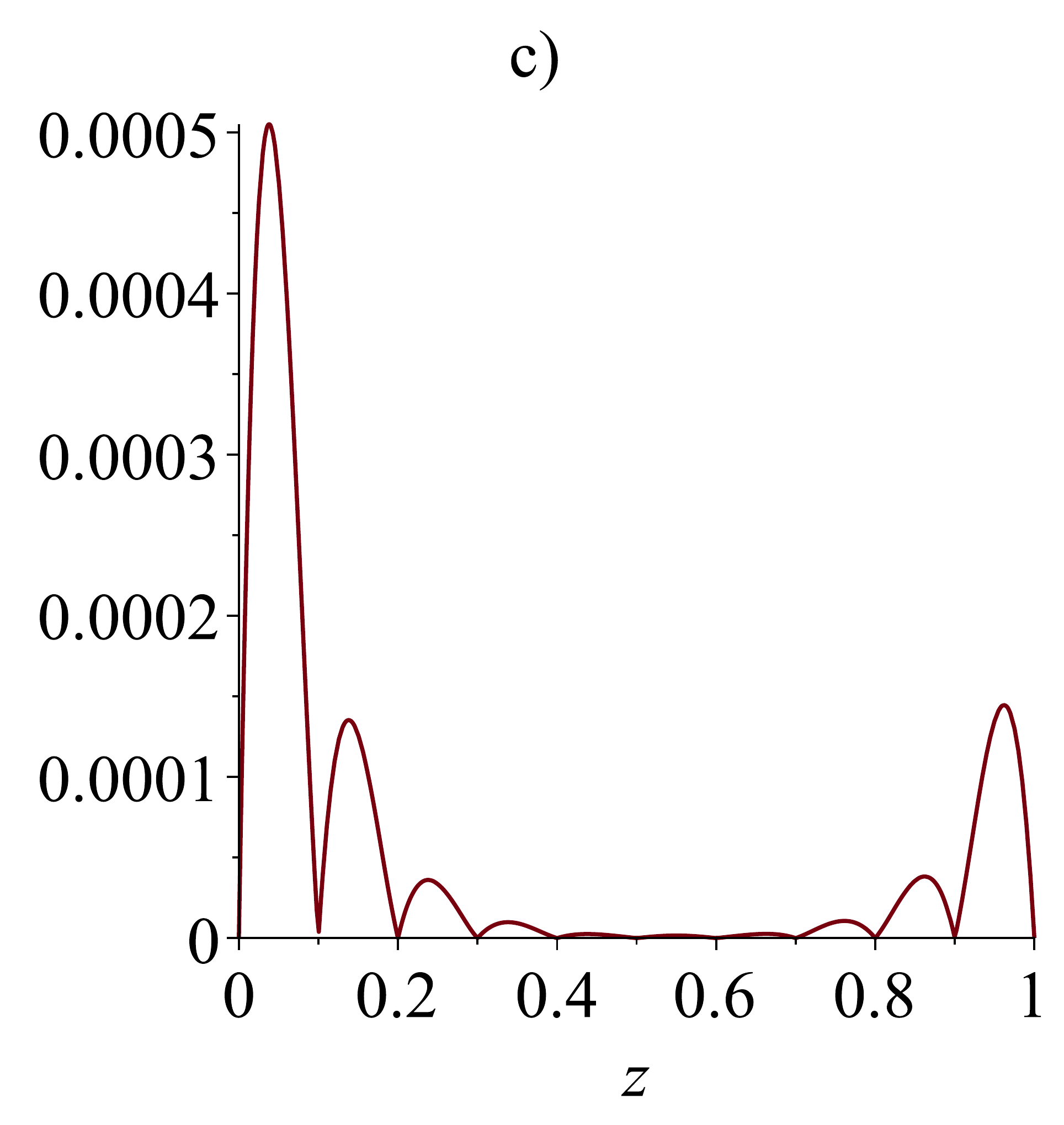}
  \caption{Graphs of absolute errors for $m=2$ and $N=10$: (\textbf{a}) $|\varphi_1(z)-P_{\varphi_1}(z)|$, (\textbf{b}) $|\varphi_2(z)-P_{\varphi_2}(z)|$, (\textbf{c}) $|\varphi_3(z)-P_{\varphi_3}(z)|$.}\label{Fig6}
\end{figure}

\begin{figure}
  \includegraphics[width=0.3\textwidth]{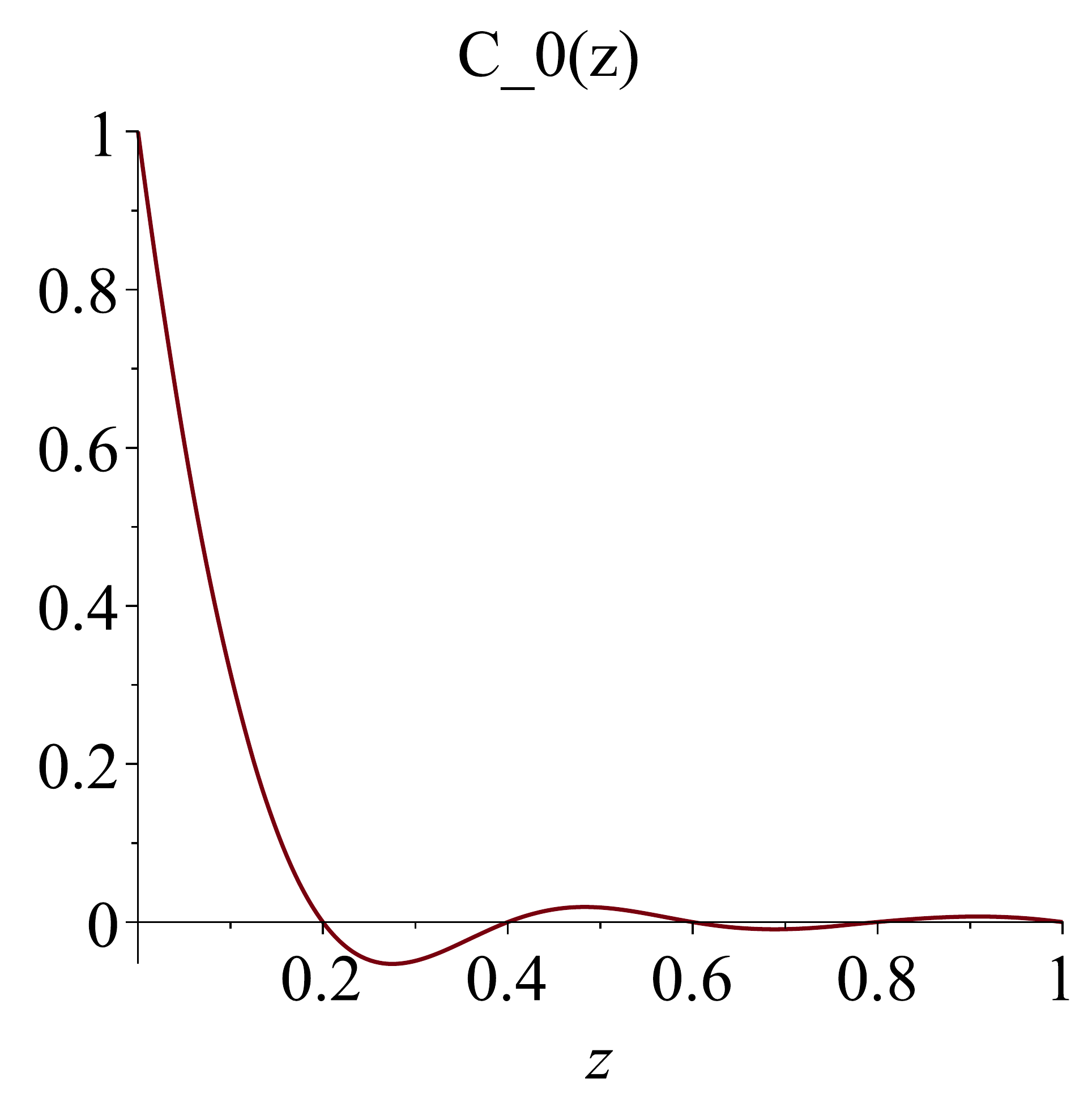}
  \includegraphics[width=0.3\textwidth]{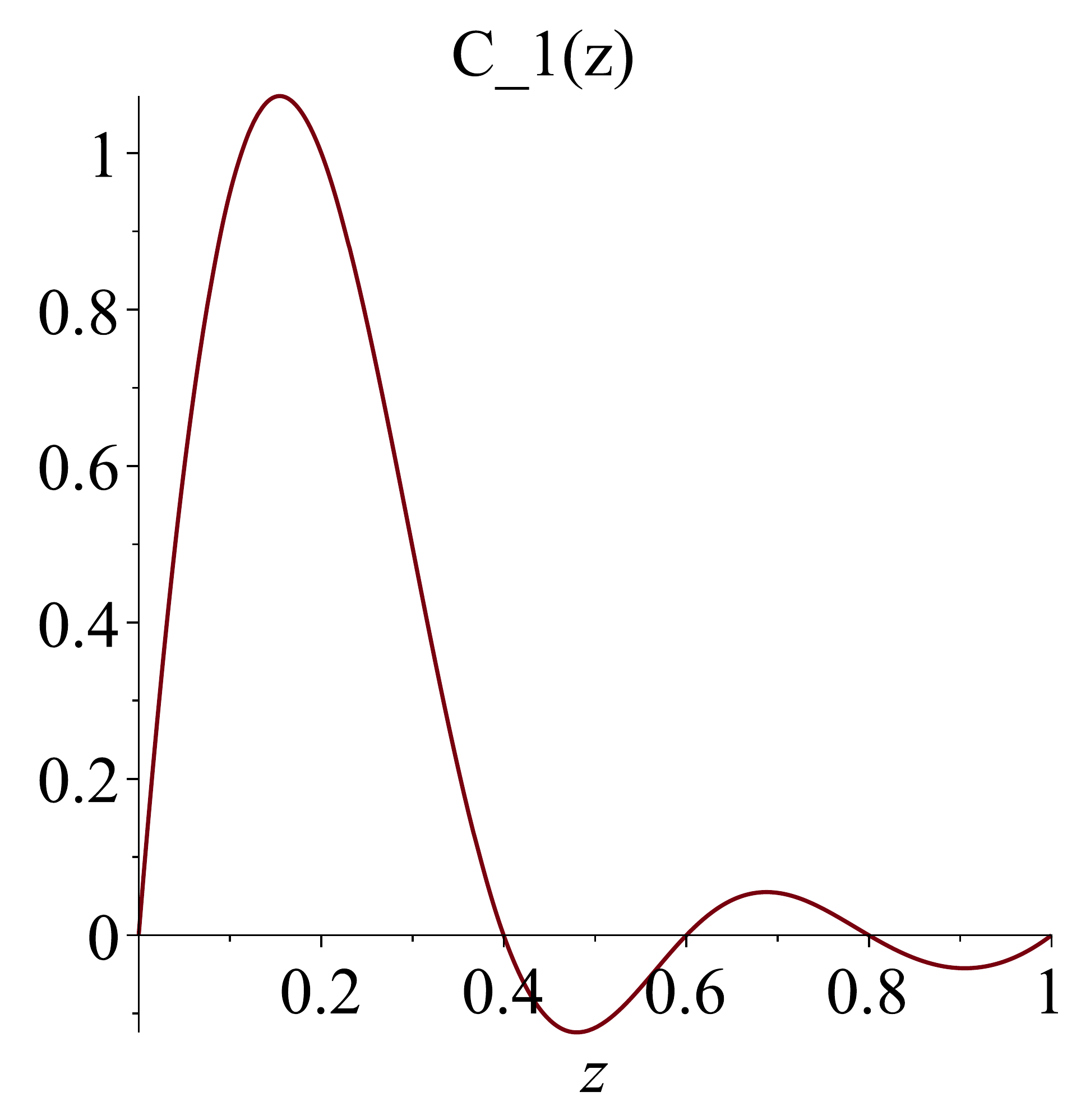}
  \includegraphics[width=0.3\textwidth]{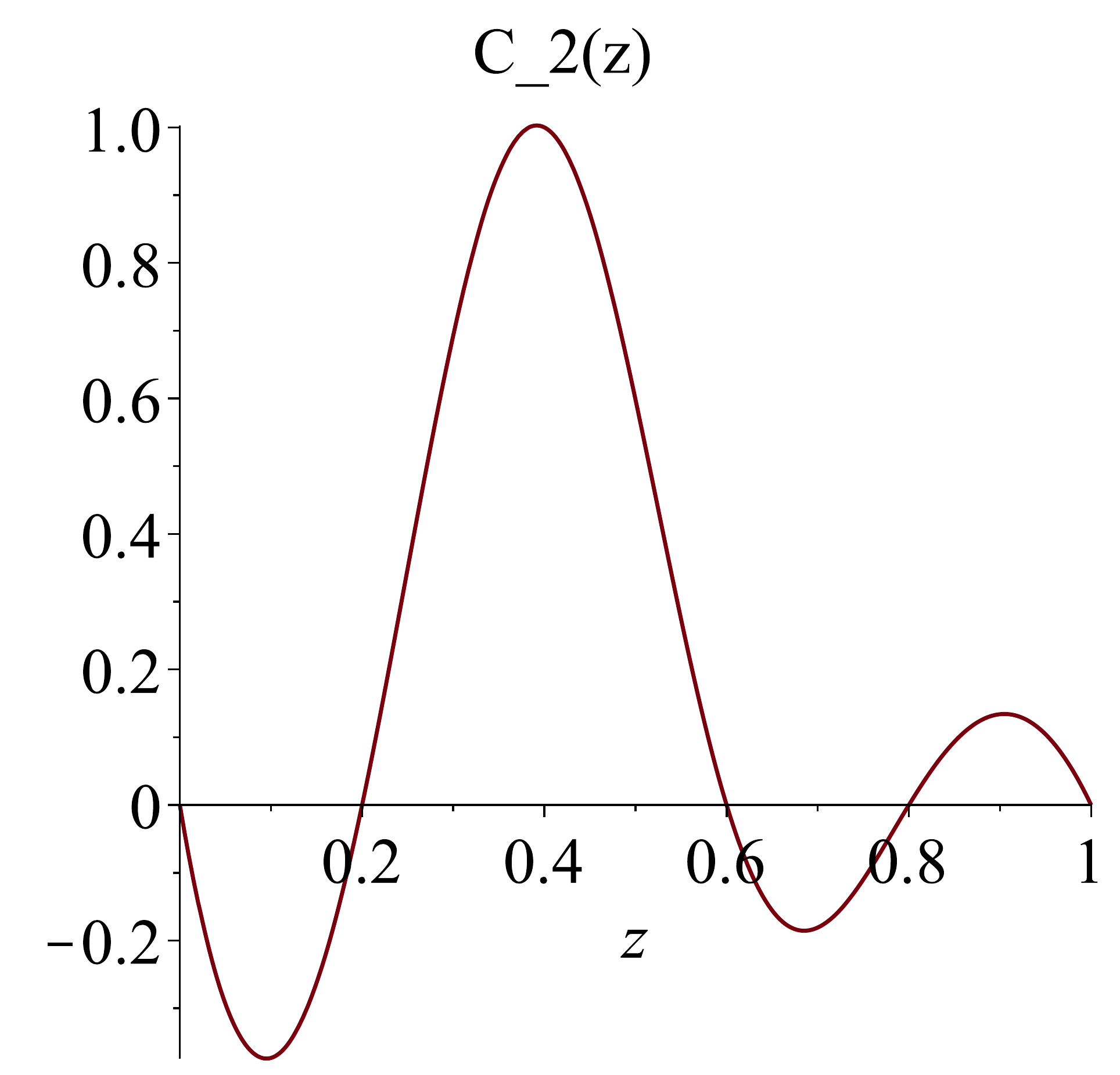}\\
  \includegraphics[width=0.3\textwidth]{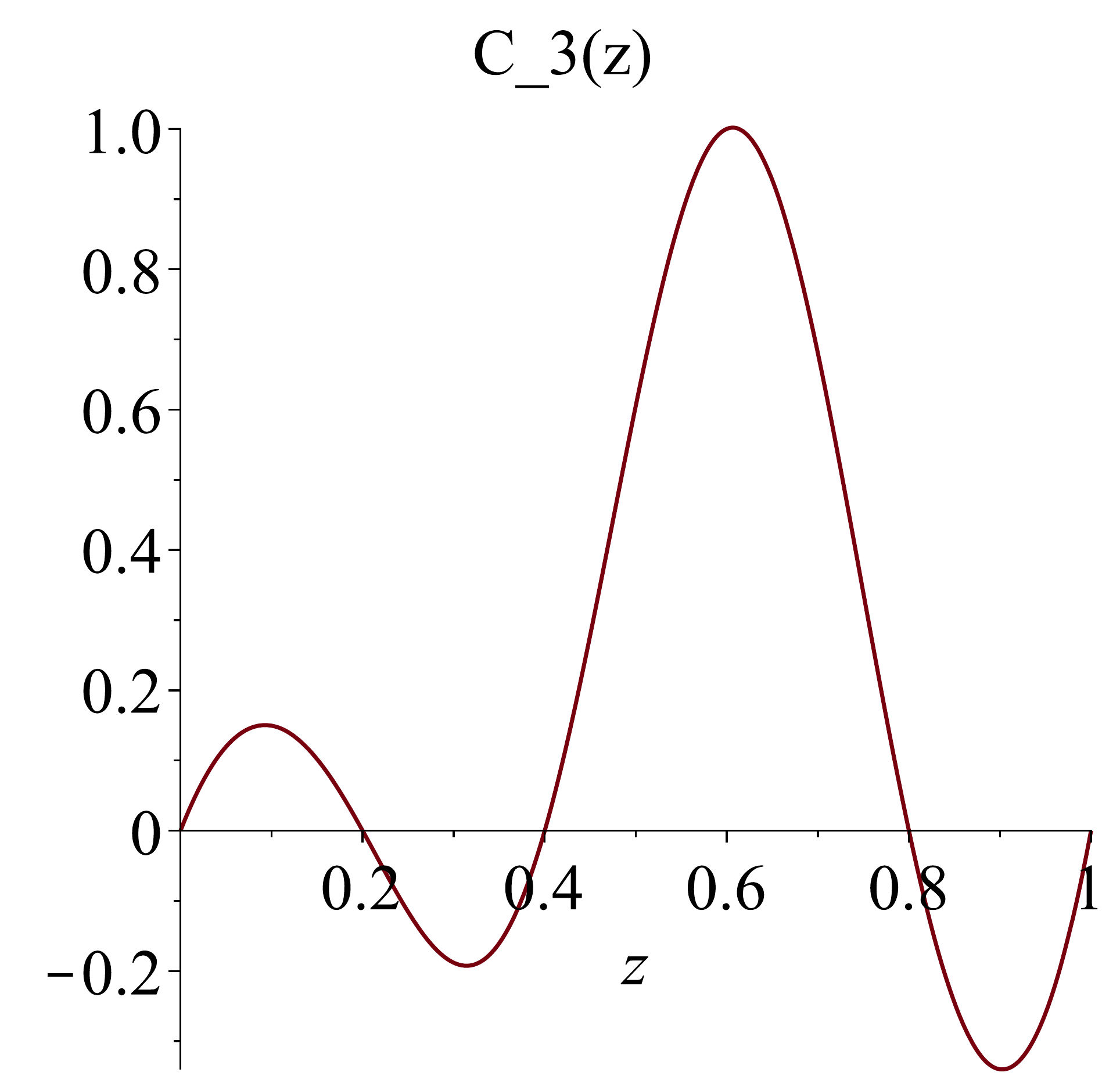}
  \includegraphics[width=0.3\textwidth]{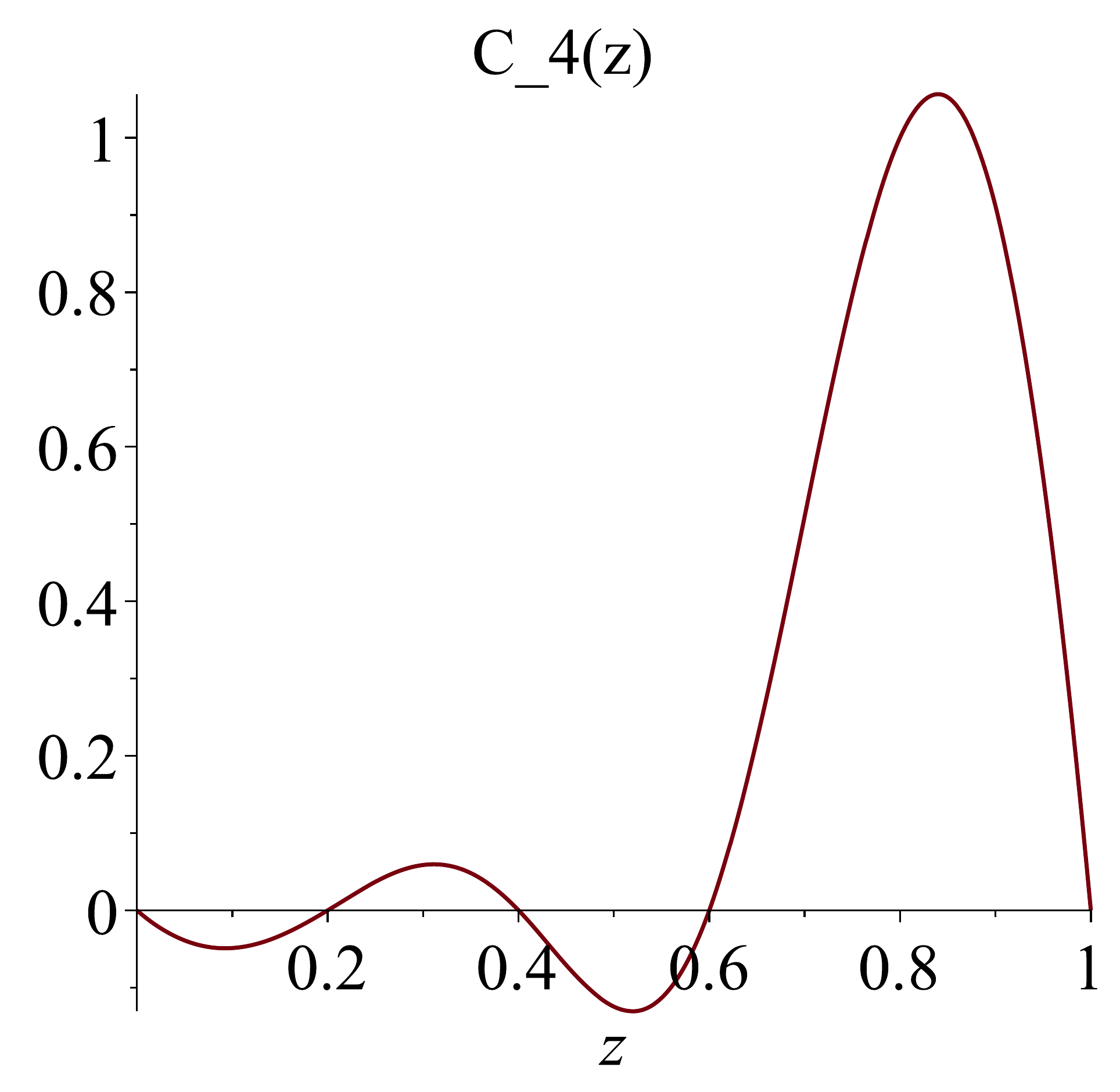}
  \includegraphics[width=0.3\textwidth]{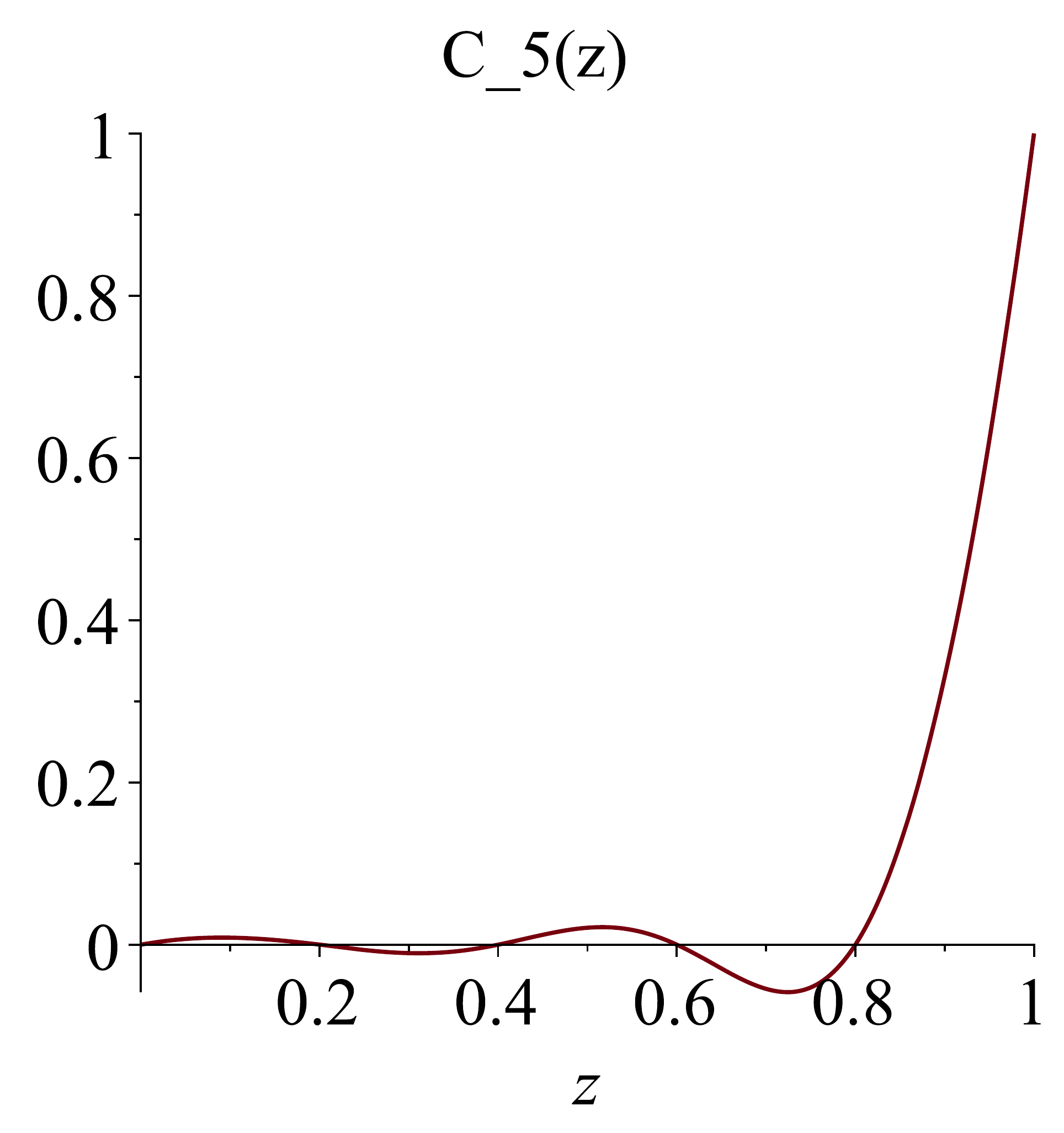}
  \caption{Graphs of coefficients of the optimal interpolation formulas (\ref{(1)}) in the case $m=3$ and $N=5$.}\label{Fig7}
\end{figure}

\begin{figure}
  \includegraphics[width=0.3\textwidth]{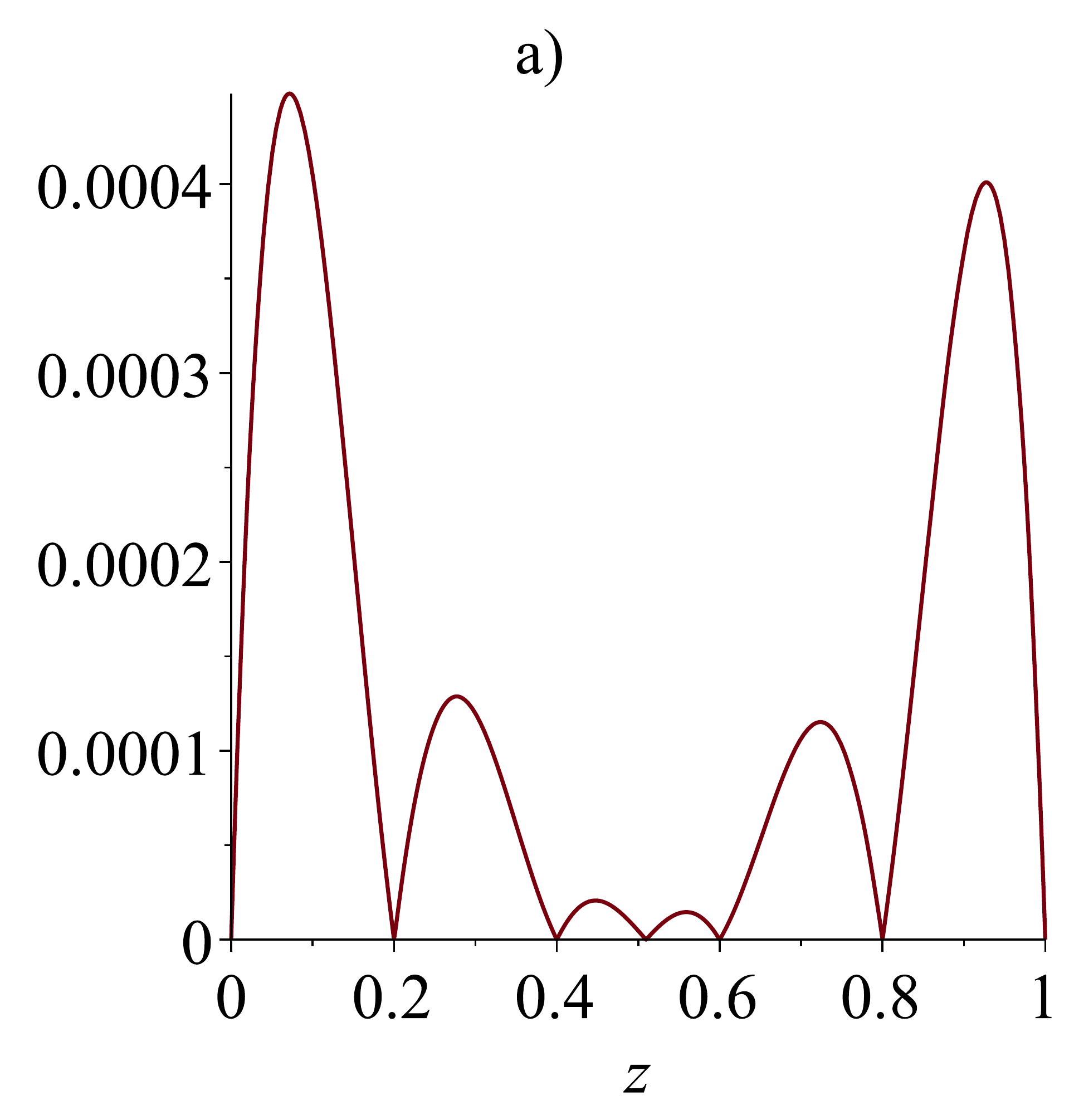}
  \includegraphics[width=0.3\textwidth]{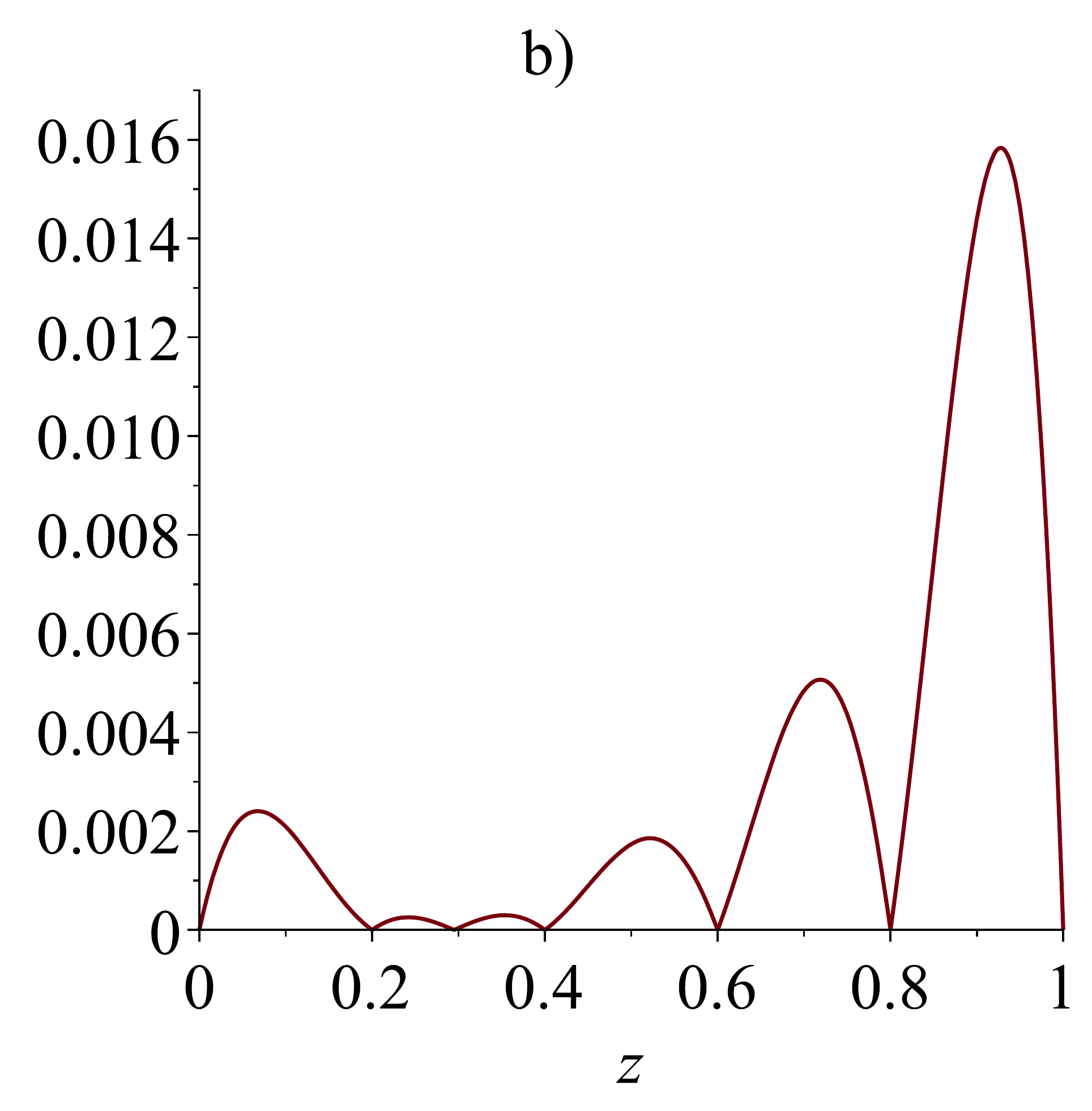}
  \includegraphics[width=0.3\textwidth]{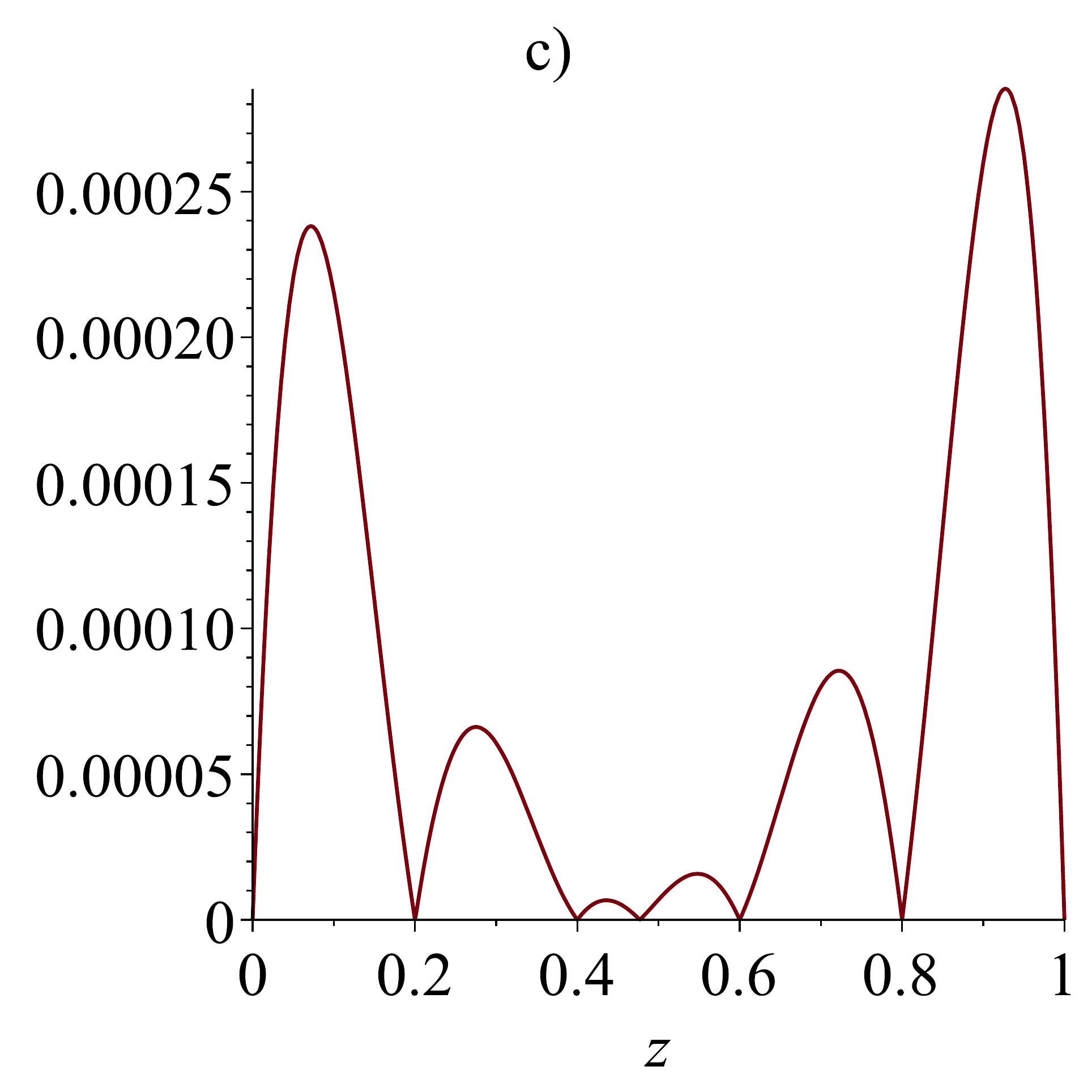}
  \caption{Graphs of absolute errors for $m=3$ and $N=5$: (\textbf{a}) $|\varphi_1(z)-P_{\varphi_1}(z)|$, (\textbf{b}) $|\varphi_2(z)-P_{\varphi_2}(z)|$, (\textbf{c}) $|\varphi_3(z)-P_{\varphi_3}(z)|$.}\label{Fig8}
\end{figure}

\begin{figure}
  \includegraphics[width=0.3\textwidth]{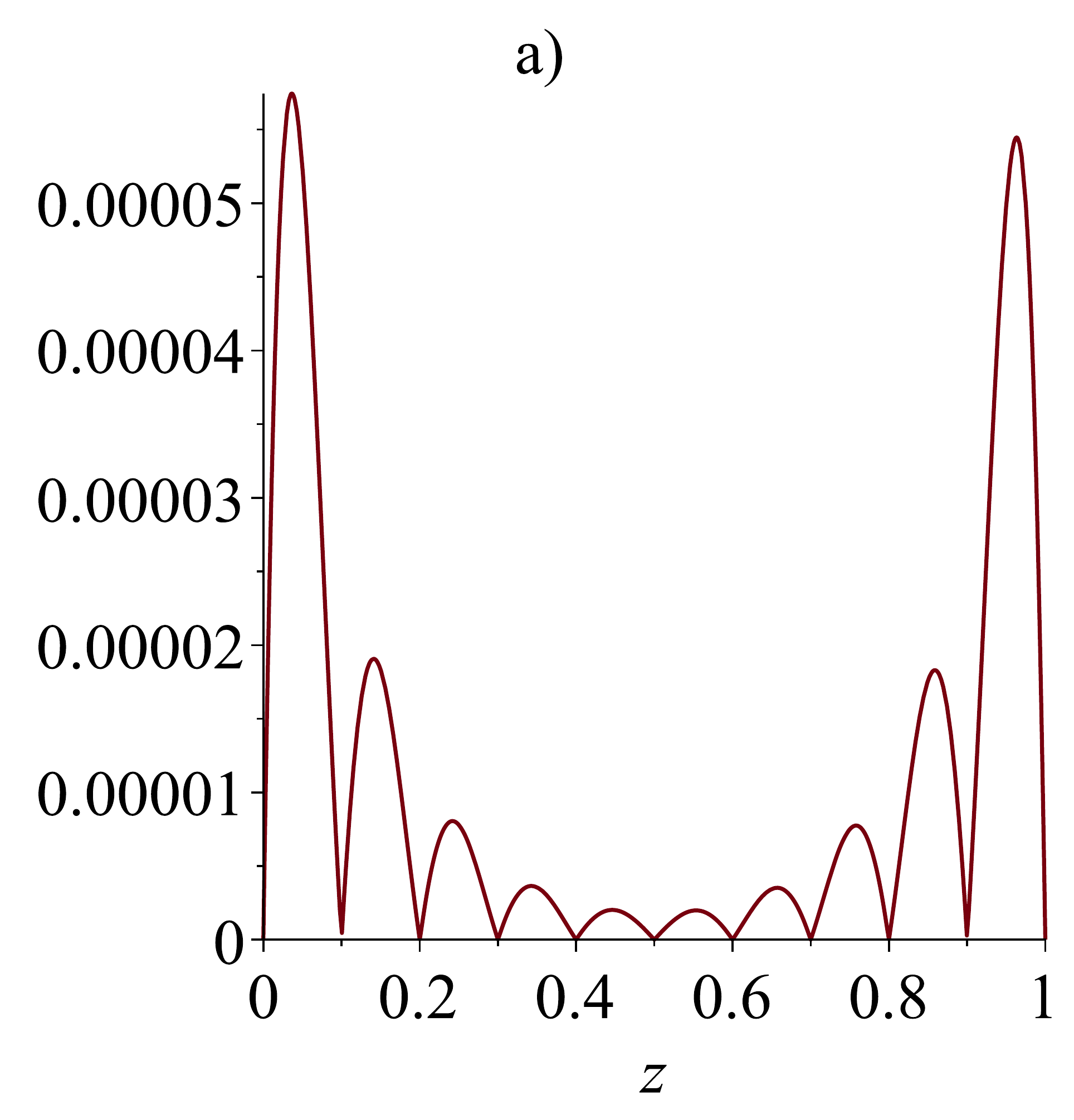}
  \includegraphics[width=0.3\textwidth]{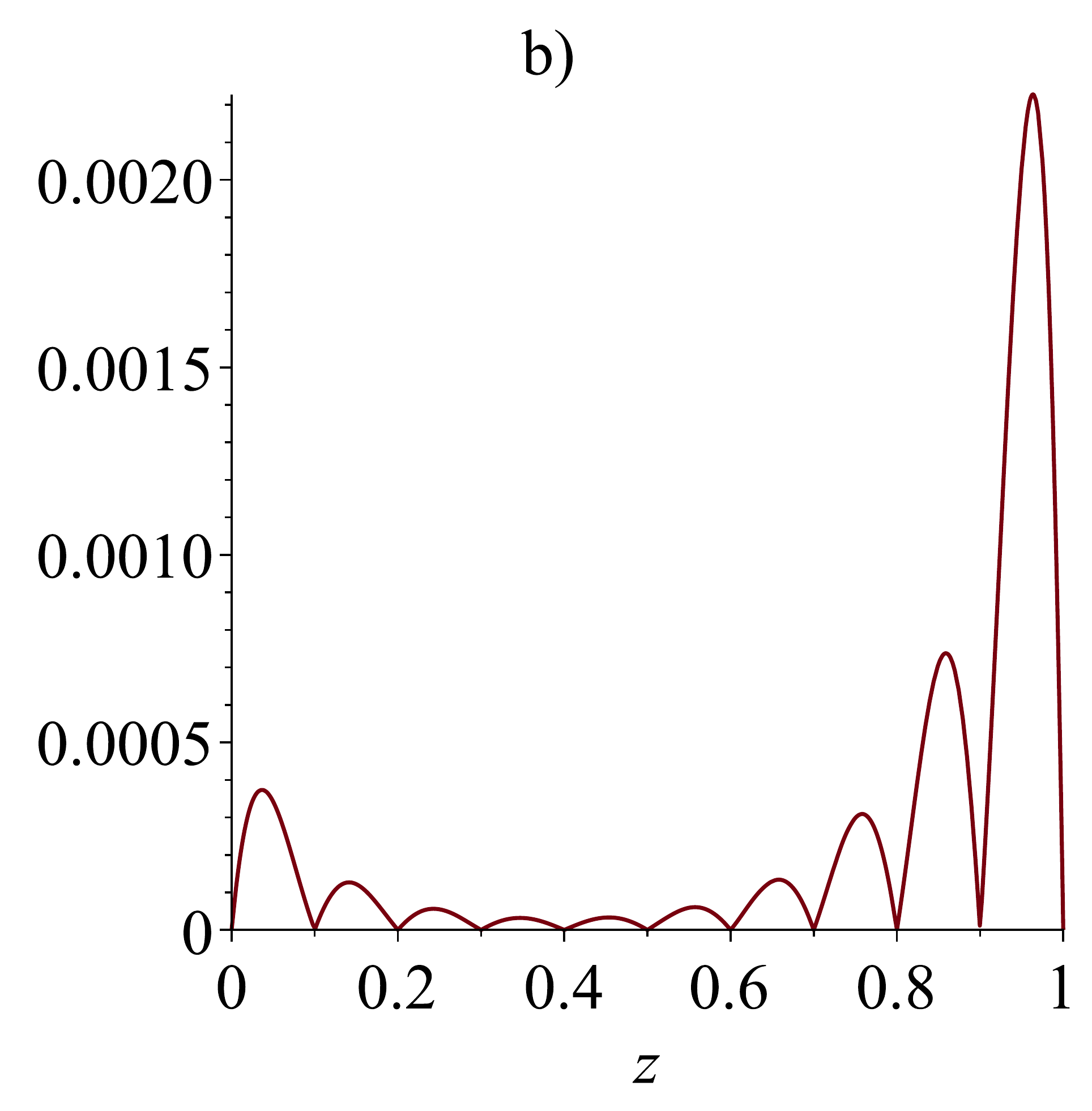}
  \includegraphics[width=0.3\textwidth]{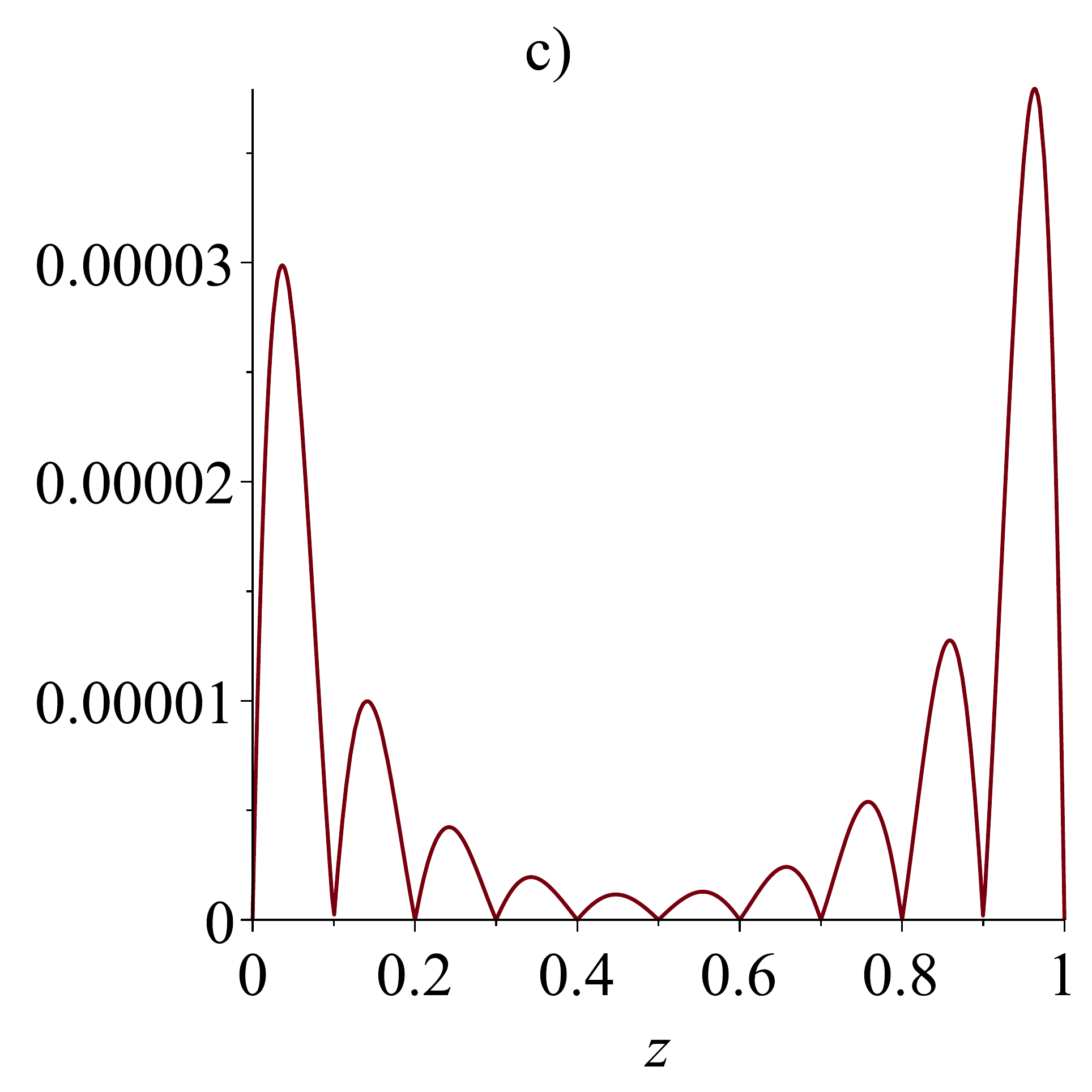}
  \caption{Graphs of absolute errors for $m=3$ and $N=10$: (\textbf{a}) $|\varphi_1(z)-P_{\varphi_1}(z)|$, (\textbf{b}) $|\varphi_2(z)-P_{\varphi_2}(z)|$, (\textbf{c}) $|\varphi_3(z)-P_{\varphi_3}(z)|$.}\label{Fig9}
\end{figure}

\end{document}